\renewcommand*{\backref}[1]{}
\renewcommand*{\backrefalt}[4]{
  \ifcase #1
  [No citations.]
  \or [#2]
  \else [#2]
  \fi }
   \def\MR#1{}
\numberwithin{equation}{section}
\theoremstyle{plain}
\newtheorem{theorem}[equation]{Theorem}
\newtheorem{thm}[equation]{Theorem}
\newtheorem{lemma}[equation]{Lemma}
\newtheorem{lem}[equation]{Lemma}
\newtheorem{prop}[equation]{Proposition}
\newtheorem*{namedtheorem}{\theoremname}
\newcommand{\theoremname}{testing}
\newenvironment{named}[1]{\renewcommand{\theoremname}{#1}\begin{namedtheorem}}{\end{namedtheorem}}
\theoremstyle{definition}
\newtheorem{definition}[equation]{Definition}
\newtheorem{remark}[equation]{Remark}
\newtheorem{example}[equation]{Example}
\newcommand{\0}{{\bf 0}}
\renewcommand{\a}{\mathbf{a}}
\renewcommand{\b}{\mathbf{b}}
\newcommand{\cc}{\mathbf{c}}
\newcommand{\CC}{\mathbb{C}}
\newcommand{\calE}{\mathcal{E}}
\newcommand{\mfc}{\mathfrak{c}}
\newcommand{\mfl}{\mathfrak{l}}
\newcommand{\mfm}{\mathfrak{m}}
\newcommand{\p}{\mathbf{p}}
\newcommand{\tet}{\mathbf{t}}
\newcommand{\TT}{\mathcal{T}}
\newcommand{\calT}{\mathcal{T}}
\newcommand{\V}{\mathcal{V}}
\DeclareMathOperator{\Id}{Id}
\newcommand{\In}{\mathrm{In}}
\newcommand{\NZ}{\mathrm{NZ}}
\newcommand{\SY}{\mathrm{SY}}
\newcommand{\PSL}{{\mathrm{PSL}}}
\newcommand{\PGL}{{\mathrm{PGL}}}
\newcommand{\bdy}{\partial}
\newcommand{\refsec}[1]{Section~\ref{Sec:#1}}
\newcommand{\refdef}[1]{Definition~\ref{Def:#1}}
\newcommand{\reffig}[1]{Figure~\ref{Fig:#1}}
\newcommand{\refeqn}[1]{\eqref{Eqn:#1}}
\newcommand{\reflem}[1]{Lemma~\ref{Lem:#1}}
\newcommand{\refprop}[1]{Proposition~\ref{Prop:#1}}
\newcommand{\refthm}[1]{Theorem~\ref{Thm:#1}}
\begin{document}

\title[A symplectic basis]{A symplectic basis for 3-manifold triangulations} 

\author{Daniel V. Mathews} 
\address{School of Mathematics,
Monash University,
VIC 3800, Australia}
\email{Daniel.Mathews@monash.edu}

\author{Jessica S. Purcell}
\address{School of Mathematics,
Monash University,
VIC 3800, Australia}
\email{jessica.purcell@monash.edu}


\begin{abstract}
In the 1980s, Neumann and Zagier introduced a symplectic vector space associated to an ideal triangulation of a cusped 3-manifold, such as a knot complement. We give a geometric interpretation for this symplectic structure in terms of the topology of the 3-manifold, via intersections of certain curves on a Heegaard surface. We also give an algorithm to construct curves forming a symplectic basis.
\end{abstract}

\maketitle


\section{Introduction}

Triangulated 3-manifolds that admit a complete hyperbolic structure must satisfy certain equations, called gluing and cusp equations, first investigated by Thurston~\cite{thurston}. 
In the 1980s, Neumann and Zagier observed that the gluing and cusp equations satisfy certain interesting symplectic relations~\cite{NeumannZagier}. 
This was made geometric: the symplectic form arises from properties of the underlying triangulation; see Choi~\cite{Choi:NZ} and Neumann~\cite{Neumann}. 

This result has had numerous applications; we name only a few here. Neumann and Zagier originally used the symplectic form to show that the solution space to gluing and cusp equations has positive rank, and obtained a bound on volumes of Dehn fillings 
by interpolating a bound on volume before Dehn filling~\cite{NeumannZagier}. Choi used the symplectic form to prove that the geometric solution space to gluing and cusp equations is a smooth complex manifold~\cite{Choi:PosOr}. The form may be reinterpreted as acting on a vector space of normal quadrilaterals embedded in 
tetrahedra, for example in work of Luo~\cite{Luo:Triangulated3Mflds} and~\cite{Luo:VolNormSfces}. Thus it has relations with normal surface theory. 
Integer solutions to gluing equations pick out solutions to matching equations of quadrilaterals, forming normal surfaces and spun normal surfaces; see Garoufalidis, Hodgson, Hoffman, and Rubinstein~\cite{GHHR:3DIndex}. 
Such solutions exist due to work of Neumann~\cite{Neumann}, reinterpreted in \cite{GHHR:3DIndex}. 
Non-degenerate solutions to the gluing equations give representations of the fundamental group into $\PSL(2,\CC)$. This has been extended to representations into $\PGL(n,\CC)$, with a symplectic form there, e.g.\ by Garoufalidis and Zickert~\cite{GZ:Symplectic2016}. There are additionally many applications in quantum topology and perturbative Chern-Simons theory; see, for example~\cite{Neumann:BlochCS, DimofteGaroufalidis, DGLZ:ChernSimons, GHRS:Index}.

Neumann and Zagier in~\cite{NeumannZagier} showed that, under their symplectic form $\omega$, the gluing relations give rise to vectors that are symplectically orthogonal. For each cusp, the cusp relations give rise to a portion of a symplectic basis: two curves $\mu$, $\lambda$ forming a homology basis on each cusp give rise to vectors $h(\mu)$, $h(\lambda)$ that are essentially (up to a factor of 2) symplectic duals: $\omega(h(\lambda),h(\mu)) = \pm 2$.
Here, the function $h$ is the \emph{combinatorial holonomy} function of Thurston, Neumann and Zagier, giving a sum of tetrahedron parameters.  What is missing, and has been missing since the form's introduction, is a dual analogue to the gluing vectors. That is, there has been no geometric description of vectors that are symplectic duals to those vectors arising from gluing relations. 
Such vectors can be determined algebraically, using a symplectic version of the Gram-Schmidt algorithm. However, such a process has no real connection to the 3-manifold triangulation, and it is not clear whether or how the resulting vectors interact with the underlying geometry and topology of the manifold.

In this paper, we give a geometric algorithm to determine symplectic dual vectors to the gluing equations, completing the cusp and gluing relations to a full symplectic basis. Our algorithm is determined by the triangulation of the 3-manifold alone. We find a collection of \emph{oscillating curves} that are geometric duals to curves encoding gluing equations. 
We define oscillating curves in \refsec{train_tracks_stations}; roughly, they are curves that proceed along $\partial M$ but may ``dive through" the interior of $M$ along edges of the triangulation, oscillating their orientation in the process.
More generally, we define \emph{abstract oscillating curves} using the geometry of triangulations. We show that the Neumann--Zagier symplectic form extends to a symplectic form on abstract oscillating curves, picking out their intersection properties. A main result is the following.

\begin{named}{\refthm{MainThm}}
Let $M$ be an open 3-manifold, homeomorphic to the interior of a compact 3-manifold $\overline{M}$,
such that $M$ is equipped with an ideal triangulation. Let $\omega$ denote the Neumann--Zagier symplectic form. For any abstract oscillating curves $\zeta$, $\zeta'$ in $M$,
\[
\omega( h(\zeta), h(\zeta')) = 2 \zeta\cdot \zeta',
\]
where $\zeta\cdot\zeta'$ denotes intersection number. 
\end{named}

We will define all the requisite notions in due course.

We note that this theorem applies more generally than to the case of finite volume hyperbolic 3-manifolds, which is the classical setting for the Neumann--Zagier symplectic form. In that setting, $\bdy\overline{M}$ must consist of tori, but \refthm{MainThm} applies to any ideal triangulation, meaning any triangulation of $M$ with vertices removed, which can be truncated to give a decomposition of $\overline{M}$ into truncated tetrahedra. Thus the frontier of $M$ may consist of any finite collection of oriented closed compact surfaces, and there may be components of any finite genus. 
Indeed, while Thurston and Neumann--Zagier were initially interested in hyperbolic structures, some methods work equally well for any triangulation pairing faces to faces, as in \cite{Neumann}.  For example, these results hold even when an ideal edge is homotopic into the boundary.

In the work of Neumann and Zagier, as interpreted by Choi, the gluing and cusp equations are represented by curves on the boundary of $M$. These are oscillating curves in the framework of \refthm{MainThm}.
Indeed, the proof of \refthm{MainThm} gives a new proof of symplectic properties of gluing and cusp relations, and extends them to larger classes of curves. It additionally gives a new proof of several results observed by Neumann and Zagier, and reframes them in a more geometric context.
For example, if the frontier of $M$ consists of $n_\mfc$ torus cusps and the triangulation consists of $n$ ideal tetrahedra, Neumann and Zagier showed that $n-n_\mfc$ gluing equations are linearly independent. Here, we pick out the linearly independent vectors explicitly and constructively.

\begin{thm}
\label{Thm:OscAlgorithmTori}
When the frontier of $M$ consists of $n_\mfc$ tori,
there exists a constructive algorithm to determine oscillating curves $C_1, \dots, C_{n-n_\mfc}$ associated with edge gluings, and dual oscillating curves $\Gamma_1, \dots, \Gamma_{n-n_\mfc}$, so that these curves, along with a standard basis for $H_1(\partial M)$, $\mfm_1$, $\mfl_1$, $\ldots$, $\mfm_{n_\mfc}$, $\mfl_{n_\mfc}$ form a symplectic basis, in the sense that
\begin{align*}
\omega(h(C_i), h(G_j)) &= 2 C_i \cdot G_j = 2 \delta_{ij} \\
\omega(h(\mfm_i), h(\mfl_j)) &= 2 \mfm_i \cdot \mfl_j = 2 \delta_{ij}
\end{align*}
and $\omega$, evaluated on the combinatorial holonomy of any other pair of curves, is zero.
\end{thm}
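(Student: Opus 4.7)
The plan is to reduce the entire symplectic problem to intersection numbers via \refthm{MainThm}, and then to exhibit curves realizing the required intersection pattern by an explicit algorithm on the boundary surface. First, for each edge $e$ of the triangulation, form the oscillating curve $C_e$ as the union of small boundary loops around the endpoints of $e$; its combinatorial holonomy $h(C_e)$ is precisely the gluing vector of $e$. Since $C_e \cap C_{e'} = \emptyset$ for $e\neq e'$, and since each $C_e$ can be taken disjoint (or arranged to have zero algebraic intersection) from a standard meridian--longitude system on each torus cusp, \refthm{MainThm} immediately gives $\omega(h(C_e), h(C_{e'})) = 0$ and $\omega(h(C_e), h(\mfm_k)) = \omega(h(C_e), h(\mfl_k)) = 0$, recovering the classical symplectic orthogonality of gluing and cusp vectors. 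The $n_\mfc$ linear dependencies among the $C_e$ come from one relation per cusp: the appropriately signed sum of endpoint loops of all $C_e$ at a given cusp bounds, corresponding to the fact that the sum of all edge angles around a cusp is null-homologous on the cusp torus. Select one edge to discard per cusp and rename the remainder $C_1, \ldots, C_{n - n_\mfc}$.

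Second, construct the duals $\Gamma_i$ algorithmically. Each $\Gamma_i$ is an oscillating curve that dives exactly once through the edge $e_i$ underlying $C_i$, and otherwise travels along $\partial M$. Since a single dive through $e_i$ automatically produces a single transverse intersection with the endpoint loop $C_i$, \refthm{MainThm} yields $\omega(h(C_i), h(\Gamma_i)) = 2$. The algorithmic content is to choose the boundary portion of $\Gamma_i$ so that the total curve has zero algebraic intersection with each $C_j$ for $j \neq i$, with every $\mfm_k$ and $\mfl_k$, and with every previously constructed $\Gamma_{i'}$. A natural ordering, dictated by a spanning tree of the edge-to-tetrahedron incidence (equivalently, the complement of the tree spanning cusp loops on which each $C_i$ was supported), allows one to route each $\Gamma_i$ through a region of $\partial M$ yet untouched; unwanted intersections that survive are removed by isotoping the boundary arc across disks in the cusp tori cut open along $\mfm_k \cup \mfl_k$, and by absorbing crossings into dives through already-used edges. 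Dimension count then finishes the job: the $2(n - n_\mfc) + 2 n_\mfc = 2n$ constructed vectors span the ambient Neumann--Zagier space because their pairing matrix is, by construction, the standard symplectic form (scaled by $2$ on the relevant blocks) and hence non-degenerate.

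The main obstacle is the \emph{simultaneous} control of intersection numbers in the second step. Constructing any individual $\Gamma_i$ is easy: an edge-dive plus a boundary arc. What is delicate is arranging that all $n - n_\mfc$ duals are mutually disjoint in algebraic intersection and also disjoint from the meridian--longitude system and each $C_j$ ($j\neq i$), while preserving the single intersection with $C_i$. This requires a carefully chosen order of construction and a verification that each allowed modification of an oscillating curve (sliding a boundary arc, rerouting across a disk, or introducing a cancelling pair of dives through an already-used edge) changes none of the previously established intersection counts. This book-keeping is the algorithmic heart of \refthm{OscAlgorithmTori}, and one expects it to occupy the bulk of the actual proof.
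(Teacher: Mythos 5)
Your overall architecture matches the paper's: reduce everything to intersection numbers via \refthm{MainThm}, realise the gluing vectors by vertex-linking loops, realise their duals by curves that dive through ideal edges, and finish with a dimension count. However, there is a genuine gap in your construction of the duals. An oscillating curve must pass end-to-end through an even number of stations (equivalently, dive through an even number of long rectangles), or the alternating forwards/backwards orientation cannot be assigned consistently around the closed curve; moreover, if the edge $e_i$ joins two distinct boundary components, a curve that dives through $e_i$ exactly once and otherwise stays on $\partial M$ cannot close up at all. So ``an edge-dive plus a boundary arc'' is not a valid oscillating curve in general. The paper's resolution is the key idea you are missing: it fixes a set $\calE_\mfc$ consisting of a spanning tree of the \emph{end graph} (vertices are the ends of $M$, edges are the ideal edges) together with one extra edge $E_0$ closing an odd cycle, discards exactly those $n_\mfc$ edges, and routes each $\Gamma_i$ through $E_i$ followed by an even-length cycle of dives through edges of $\calE_\mfc$ only. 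This simultaneously solves the parity problem, the closing-up problem, and most of the book-keeping you defer: since each $\Gamma_i$ dives only through $E_i$ and edges of $\calE_\mfc$, disjointness from $C_j$ for $j \neq i$ is essentially automatic, and mutual disjointness of the $\Gamma_i$ is arranged by running parallel strands through shared long rectangles. Your spanning tree of the ``edge-to-tetrahedron incidence'' is the wrong graph for this purpose, and you invoke the extra dives only to ``absorb crossings'', not to satisfy the parity constraint.

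A second, smaller error: you define $C_e$ as the union of loops around \emph{both} endpoints of $e$. The link of a single endpoint already carries the full gluing vector of $e$ (each tetrahedron corner at $e$ contributes one cusp triangle at each endpoint), so your $C_e$ has holonomy twice the gluing vector, and a single dive through $e$ meets it in two points rather than one, with signs further complicated by the orientation reversal at the station. The paper takes a loop around one endpoint only, which is what makes $2\,C_i \cdot \Gamma_j = 2\delta_{ij}$ come out correctly.
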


This construction also works in the general case, where the frontier may consist of surfaces of any genus. Again let $n_\mfc$ denote the number of boundary components and $n$ the number of ideal tetrahedra; also let $n_E$ denote the number of edges. Then the frontier of $M$ consists of $n_\mfc$ closed oriented surfaces $S_1, \ldots, S_{n_\mfc}$. Let $S_i$ have genus $g_i$ and let $g = \sum_{i=1}^{n_\mfc} g_i$ denote the total genus.

\begin{named}{\refthm{OscAlgorithm}}
There exists a constructive algorithm to determine oscillating curves $C_1, \dots, C_{n_E-n_\mfc}$ associated with edge gluings, and dual oscillating curves $\Gamma_1, \dots, \Gamma_{n_E-n_\mfc}$, so that these curves along with a standard basis for $H_1(\partial M)$, $\mfm_1$, $\mfl_1$, $\ldots$, $\mfm_{g}$, $\mfl_{g}$ form a symplectic basis, in the sense that
\begin{align*}
\omega(h(C_i), h(G_j)) &= 2 C_i \cdot G_j = 2 \delta_{ij} \\
\omega(h(\mfm_i), h(\mfl_j)) &= 2 \mfm_i \cdot \mfl_j = 2 \delta_{ij}
\end{align*}
and $\omega$, evaluated on the combinatorial holonomy of any other pair of curves, is zero.
\end{named}  

The $\mfm_j$ and $\mfl_j$ above consist of $2g_i$ curves on each boundary surface $S_i$, forming a standard symplectic basis for $H_1(S_i)$. This includes the case of genus zero: when $S_i$ is a sphere, none of the $\mfm_j$ or $\mfl_j$ lie on $S_i$. 

When all boundary surfaces are tori we have all $g_i = 1$ so $g = n_\mfc$, and $n_E = n$, so \refthm{OscAlgorithm} reduces to \refthm{OscAlgorithmTori}.

\subsection{Three-dimensional train tracks}
For the proof of the main theorems, we organise the curves in our triangulation into a 3-dimensional variant of \emph{train tracks} on a 3-dimensional triangulation, and show that Neumann--Zagier's symplectic form is equivalent to an intersection form on these 3-dimensional train tracks.
Oscillating curves can be equivalently defined without train tracks --- they can be regarded more geometrically as ``curves on the boundary which may dive through the manifold along ideal edges of a triangulation", and this can be made precise --- but train tracks provide a convenient device for defining and organising them.

Train tracks were introduced by Thurston to study simple closed curves and measured laminations on surfaces~\cite{thurston}. Penner, with Harer, gives a comprehensive treatment of the combinatorics of train tracks on surfaces~\cite{PennerHarer:TrainTracks}, including discussion of a symplectic form on closed curves and measured laminations on surfaces. This symplectic form is often called Thurston's anti-symmetric intersection form on the space of measured laminations. It is known to be closely related to the Weil-Petersson symplectic form on Teichm\"uller space~\cite{BonahonSozen}, and to the Atiyah-Bott-Goldman symplectic form on the character variety~\cite{Zeybek}. Indeed, Luo notes in \cite{Luo:VolNormSfces} that the Neumann--Zagier symplectic form is a 3-dimensional counterpart to Thurston's intersection form. This paper makes the analogy more explicit.

Our 3-dimensional train tracks live on the Heegaard surface associated to a 3-manifold triangulation. That is, associated to a triangulation is a Heegaard splitting with a handlebody on one side corresponding to a neighbourhood of the dual graph of the triangulation, and a compression body on the other side. The train track is on the boundary of the handlebody. 
Notions of branches, switches, carrying curves, and measured laminations exist on the Heegaard surface, and they are identical to their 2-dimensional analogues. The key difference is that we must introduce a new type of switch that we call a ``station''. This appears exactly in neighbourhoods of ideal edges of the triangulation, or equivalently in an annular neighbourhood of a meridian curve for the compression body in the Heegaard splitting. At a station, oriented arcs must stop and change direction. Oscillating curves are exactly those curves that run into an even number of stations, so that an oscillating orientation on the entire curve is well defined.

\subsection{Applications}

Our geometric interpretation of a symplectic dual to the gluing equations can be read immediately off of a triangulation of the 3-manifold, similar to the gluing and cusp equations themselves. They are obtained algorithmically. Gluing and cusp vectors are implemented by SnapPy~\cite{SnapPy}. We expect it should be straightforward to add to this computation of symplectic duals of gluing equations, although we have not implemented this computationally as of the writing of this paper. 

Solutions to the gluing and cusp equations may be encoded in vectors $Z$ that satisfy $\NZ\cdot Z = i\pi C$, where $\NZ$ is an integer matrix encoding gluing and cusp equations, and $C$ is an integer vector with values determined by the triangulation. Using the methods of this paper, particularly \refthm{OscAlgorithm}, we may add rows to $\NZ$ to extend it into a full $2n\times 2n$ matrix $\SY$, which is essentially symplectic: namely symplectic, aside from some factors of $2$. Solutions to gluing and cusp equations then satisfy $\SY\cdot Z = \overline{C}$, where $\overline{C}$ is obtained from $C$ by adding some zeroes.
Since $\SY$ is essentially symplectic, it is very easily invertible.

\begin{named}{\refthm{SolvingNZ}}
Let $\SY$ be the $2n\times 2n$ 
matrix whose rows correspond to gluing equations and their symplectic duals arising from oscillating curves, and curves forming a symplectic basis for $H_1(\partial M)$.
Then a solution to the gluing and cusp equation satisfies
\[ 2Z = i\pi (-J(\SY)^T J)\overline{C} \]
Here $\overline{C}$, $\SY$, $J$ are all integral matrices, so our solutions are integer or half integer multiples of $i\pi$. 
\end{named}

If we remove the factor $i\pi$ from the right hand side of the equation of \refthm{SolvingNZ}, we are considering integer and half-integer solutions to an equation considered by Neumann, who showed that in fact, integer solutions exist \cite{Neumann}. 
\refthm{SolvingNZ} gives an explicit way to compute these solutions, modulo a factor of two. It would be very satisfying to develop an algorithmic way to determine symplectic dual vectors that are all even, so that our matrix $\SY$ yields integer solutions as predicted by \cite{Neumann}. In work of the authors with Howie, \cite{HowieMathewsPurcell} we show such solutions are obtained by adding multiples of explicit vectors, but the geometry of the process is unclear.

\subsection{Organisation}

In \refsec{TrainTracks} we discuss train tracks and our enhancements of them, including stations. We also introduce the notions of oscillating curve and their intersection numbers. 

In \refsec{Tetrahedra} we discuss the Heegaard splitting obtained from an ideal triangulation, and a cell decomposition of the associated handlebody which is crucial for our constructions. Then in \refsec{polyhedra_abcs} we systematically construct train tracks on the Heegaard surface, and consider oscillating curves on these train tracks and their intersection properties.

In \refsec{CombinatorialHolonomy} we define the symplectic vector space associated to the triangulation and the notion of combinatorial holonomy, allowing us to state the Main \refthm{MainThm}. In the subsequent \refsec{omega_on_oscillating} we prove it.

In \refsec{SymplecticBasis} we describe how to construct a symplectic basis of oscillating curves, stating and proving the second Main \refthm{OscAlgorithm}.

Finally, in \refsec{Matrices} we consider the examples of the figure-8 knot and Whitehead link complements, explicitly calculating symplectic bases and matrices. We also consider how our results can be used to solve gluing and cusp equations.

\subsection{Acknowledgements}
This work was partially funded by Australian Research Council grant DP210103136.

\section{Enhanced train tracks}\label{Sec:TrainTracks}

We will be considering paths on a Heegaard surface. Not all paths are permitted, at least without adjusting by isotopy. 
It is convenient to describe the permitted paths by constraining them
to lie on an object which is similar to a train track, with a few enhancements. We now discuss tracks and curves on them, in a more or less conventional sense, before proceeding to describe the required enhancements. See Penner and Harer \cite{PennerHarer:TrainTracks} for more background on conventional train tracks.

\subsection{Conventional train tracks}
A \emph{train track} on a surface $S$ is a graph $\tau$ smoothly embedded in $S$ such that edges at each vertex are all mutually tangent, and there is at least one edge in each of the two possible directed tangent directions. 
Because of the resemblance to railroads, the vertices of $\tau$ are called \emph{switches} and the edges are called \emph{branches}. Thus all switches have valence at least two; in the examples we construct, they will have valence two, three, and four, with one edge on one side and three on the other. Figure~\ref{Fig:Switch} shows a trivalent switch.

\begin{figure}
\begingroup%
  \makeatletter%
  \providecommand\color[2][]{%
    \errmessage{(Inkscape) Color is used for the text in Inkscape, but the package 'color.sty' is not loaded}%
    \renewcommand\color[2][]{}%
  }%
  \providecommand\transparent[1]{%
    \errmessage{(Inkscape) Transparency is used (non-zero) for the text in Inkscape, but the package 'transparent.sty' is not loaded}%
    \renewcommand\transparent[1]{}%
  }%
  \providecommand\rotatebox[2]{#2}%
  \newcommand*\fsize{\dimexpr\f@size pt\relax}%
  \newcommand*\lineheight[1]{\fontsize{\fsize}{#1\fsize}\selectfont}%
  \ifx\svgwidth\undefined%
    \setlength{\unitlength}{144bp}%
    \ifx\svgscale\undefined%
      \relax%
    \else%
      \setlength{\unitlength}{\unitlength * \real{\svgscale}}%
    \fi%
  \else%
    \setlength{\unitlength}{\svgwidth}%
  \fi%
  \global\let\svgwidth\undefined%
  \global\let\svgscale\undefined%
  \makeatother%
  \begin{picture}(1,0.5)%
    \lineheight{1}%
    \setlength\tabcolsep{0pt}%
    \put(0,0){\includegraphics[width=\unitlength,page=1]{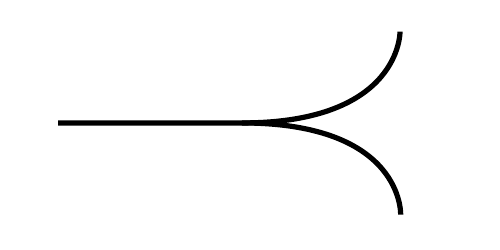}}%
    \put(0.02326245,0.24411309){\color[rgb]{0,0,0}\makebox(0,0)[lt]{\lineheight{1.25}\smash{\begin{tabular}[t]{l}$\gamma_0$\end{tabular}}}}%
    \put(0.820005,0.43849827){\color[rgb]{0,0,0}\makebox(0,0)[lt]{\lineheight{1.25}\smash{\begin{tabular}[t]{l}$\gamma_2$\end{tabular}}}}%
    \put(0.81958951,0.07060927){\color[rgb]{0,0,0}\makebox(0,0)[lt]{\lineheight{1.25}\smash{\begin{tabular}[t]{l}$\gamma_1$\end{tabular}}}}%
  \end{picture}%
\endgroup%

  \caption{The switch of a train track.}
  \label{Fig:Switch}
\end{figure}

An \emph{orientation} on a train track is an orientation of each of its branches. We allow each branch to be oriented arbitrarily.
Note in the literature, a train track is orientable if there exists an orientation with branches running into a switch on one side, and branches running out on the other. The train tracks we construct will never be orientable in this sense. Instead, we will orient branches in our constructed train tracks below in a way that makes our computations more straightforward. 

Finally, the complementary regions of a conventional train track typically cannot include any nullgons, monogons, bigons, or annuli. We will be constructing specific train tracks that satisfy this condition, and so we will not mention it further.

A train track $\tau$ has a natural thickening, called its fibred neighbourhood and denoted $N(\tau)$, as follows. Each branch $e$ is thickened to $e\times[-1,1]$, and intervals $p\times[-1,1]$ are called fibres, for points $p\in e$. At a trivalent switch $v$, branches $e_1, e_2$ approach from one direction and $e_3$ from another. The fibred neighbourhood identifies $v\times [-1,1]$ in $e_1\times[-1,1]$ to $v\times[-1,0]$ in $e_3\times[-1,1]$, and $v\times[-1,1]$ in $e_2\times[-1,1]$ to $v\times[0,1]$ in $e_3\times[-1,1]$. Similarly if a switch has branches $e_1,\dots, e_n$ on one side, and $e_{n+1}$ on the other, subdivide $v\times [-1,1]$ in $e_{n+1}\times [-1,1]$ into $n$ subintervals, and identify $v\times[-1,1]$ in $e_i\times[-1,1]$ to the $i$th subinterval, for $i=1, \dots, n$. A lamination is \emph{carried} by $\tau$ if it lies within the fibred neighbourhood and is transverse to all the fibres. 
Thus a closed curve $c$ carried by $\tau$ can be regarded as consisting of a cyclic sequence of oriented branches $\gamma_j$ and switches $v_j$ of $\tau$
\begin{equation}\label{Eqn:CyclicSequence}
v_0, \gamma_0, v_1, \gamma_1, v_2, \ldots, v_{n-1}, \gamma_{n-1}, v_n = v_0, \gamma_n = \gamma_0,
\end{equation}
such that each $\gamma_j$ ($j$ taken modulo $n$) is oriented from $v_j$ to $v_{j+1}$, and at each switch $v_j$, the two incident branches $\gamma_{j-1}$ and $\gamma_j$ lie on opposite sides of $v_j$. (Note the orientation on each $\gamma_j$ may agree or disagree with the orientation given to that branch on $\tau$.) Such a $c$ can be realised smoothly. However, it may run over an individual branch many times. We may further extend to curves that are not embedded. In such a case, we may arrange that self-intersections of the curve occur at switches.

A \emph{measured train track} consists of a train track with a non-negative real number $n_\gamma$ assigned to each branch $\gamma$, called a \emph{weight}. The weights must satisfy the following \emph{switch condition}. Let $v$ be a switch with incidence edges $\gamma_1$, $\gamma_2$ on one side, $\gamma_3$ on the other. Then
$n_{\gamma_1}+n_{\gamma_2}=n_{\gamma_3}$. More generally, for an oriented train track, we require:
\begin{equation}
\label{Eqn:3way_compatibility}
\epsilon_{\gamma_1} n_{\gamma_1} + \epsilon_2 n_{\gamma_2} + \epsilon_{\gamma_3} n_{\gamma_3} = 0
\end{equation}
Here, each $\epsilon_{\gamma_j} = 1$ if $\gamma_j$ is oriented away from $v$ and $-1$ if $\gamma_j$ is oriented towards $v$.
Equation \refeqn{3way_compatibility} expresses equality of incoming and outgoing components at $v$.

Define an \emph{abstract curve} to be an integer linear combination of branches of $\tau$, that is, $\sum_\gamma n_\gamma \gamma$ where each $n_\gamma$ is an integer, and the $n_\gamma$ satisfy the switch condition~\eqref{Eqn:3way_compatibility}. 

A closed curve $c$ on $\tau$ in the sense described by the cyclic sequence~\eqref{Eqn:CyclicSequence} above gives rise to an abstract curve on $\tau$ by assigning to each branch $\gamma$ of $\tau$ the \emph{signed} number of times $c$ passes through $\gamma$, denoted $n_\gamma$. Thus $n_\gamma$ given by $n_\gamma = n_\gamma^+ - n_\gamma^-$, where $n_\gamma^+$ (resp.\ $n_\gamma^-$) is the number of times $c$ passes through $\gamma$ in the direction agreeing (resp.\ disagreeing) with the orientation $\gamma$ is given in $\tau$. A multicurve (i.e.\ a collection of such curves $c$) also gives rise to an abstract curve.

\subsection{Signed intersection numbers}
There is a standard notion of signed intersection number of curves carried by a train track, which extends naturally to abstract curves. Each trivalent switch $s$ in an oriented train track has three branches, and we may draw the track around $s$ as in \reffig{Switch} (where the page has the orientation of the surface). One of these three branches $\gamma_0$ is tangent to $s$ in one direction, and the other two branches $\gamma_1, \gamma_2$ are tangent to $s$ in the opposite direction, so that $\gamma_0, \gamma_1, \gamma_2$ are in anticlockwise order around $s$ with respect to the orientation on the surface $S$. See \reffig{Switch}.

We define a \emph{local signed intersection number} at each switch. 
If $\gamma_1$ and $\gamma_2$ are oriented out of $s$, then we have a local signed intersection number defined as
\[ \gamma_1 \cdot\gamma_2 = 1; \quad \gamma_2 \cdot\gamma_1=-1 
\]
with other local intersection numbers at $s$ (including all those involving $\gamma_0$) being zero. If $\gamma_1$ is oriented into $s$ we flip the above signs; and if $\gamma_2$ is oriented into $s$ we flip them again. (Note this is just the right hand rule in all cases.)

This local intersection number thus agrees locally with the signed intersection number of curves as usually defined. 

\begin{definition}\label{Def:IntNumber_UsualTT}
For $\zeta=\sum_\gamma n_\gamma\gamma$, and $\zeta'=\sum_\gamma n'_\gamma\gamma$ abstract curves on $\tau$, define the \emph{abstract intersection number} $\zeta\cdot\zeta'$ by:
\begin{equation}
\label{Eqn:intersection_numbers}
  2 \zeta\cdot \zeta' = \left( \sum_\gamma n_\gamma \gamma \right) \cdot \left( \sum_\gamma n'_\gamma \gamma \right),
\end{equation}
where we extend linearly. 
\end{definition}

\begin{lemma}\label{Lem:Intersection}
  Let $\tau$ be a trivalent train track. Let $\zeta$, $\zeta'$ be curves carried by $\tau$ with associated abstract curves $\sum_\gamma n_\gamma\gamma$ and $\sum_\gamma n'_\gamma\gamma$. Then the algebraic intersection number of $\zeta$ and $\zeta'$ is equal to the abstract intersection number defined above. \qed
\end{lemma}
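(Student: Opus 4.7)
The plan is to isotope $\zeta$ and $\zeta'$ inside the fibred neighbourhood $N(\tau)$ so that all signed intersections are concentrated in small disk neighbourhoods of the switches of $\tau$, and then compute the local signed intersection at each switch and assemble these into the abstract formula. Within each branch $e \times [-1,1]$ of $N(\tau)$ the strands of $\zeta$ and $\zeta'$ are transverse to the fibres $\{p\} \times [-1,1]$, so after a small fibrewise isotopy they become disjoint horizontal arcs at distinct heights. Thus no signed intersection occurs in the interior of any branch, and all intersections can be confined to switch neighbourhoods.

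Next, analyse a trivalent switch $s$ with branches $\gamma_0$ on one side and $\gamma_1, \gamma_2$ on the other (in anticlockwise order, as in \reffig{Switch}). Each strand of $\zeta$ or $\zeta'$ passing through $s$ is an oriented arc in the switch disk, classified by which pair of branches it connects and in what direction. Strands connecting the same pair of branches can be arranged as disjoint parallel arcs; any forced crossings between them involve (anti-)parallel tangents and come in cancelling $\pm 1$ pairs, so they contribute nothing to the signed count. Only strands on ``opposite sides''---one $\gamma_0 \leftrightarrow \gamma_1$ and one $\gamma_0 \leftrightarrow \gamma_2$---produce essential signed crossings. The right-hand rule, applied using the orientations of the strands and of $S$, shows that each such crossing contributes exactly the sign given by $\gamma_1 \cdot \gamma_2$ in \refdef{IntNumber_UsualTT}, with the flips prescribed there when $\gamma_1$ or $\gamma_2$ is oriented into $s$. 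Summing over strand pairs at $s$ yields
\[
n_{\gamma_1} n'_{\gamma_2}(\gamma_1 \cdot \gamma_2) + n_{\gamma_2} n'_{\gamma_1}(\gamma_2 \cdot \gamma_1)
\]
for the local contribution, which collects all terms of $\sum_{\gamma,\gamma'} n_\gamma n'_{\gamma'}(\gamma \cdot \gamma')$ arising from the switch $s$. Summing over all switches then gives $2\zeta\cdot\zeta'$, the factor of $2$ reflecting that each geometric crossing is detected through both ordered pairs $(\gamma_1, \gamma_2)$ and $(\gamma_2, \gamma_1)$ in the bilinear sum.

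The hardest part will be the local analysis at each switch: the signed count between opposite-side strands depends on the relative positions of their endpoints along the fibres, which are in turn constrained globally by how $\zeta$ and $\zeta'$ connect across the track. I expect this can be handled by choosing a canonical fibrewise arrangement of strands---for example, ordering strands along each fibre by their signed direction along the branch---which reduces the local count cleanly to the bilinear expression above, and also produces the sign corrections for inward-oriented branches in a uniform way.
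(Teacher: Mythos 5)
Your reduction to a local count at switches is the right framework, but the two places where the proof actually has to work are both problematic. First, the factor of $2$ is justified incorrectly. In the bilinear expansion at a switch $s$, the two ordered pairs contribute $n_{\gamma_1} n'_{\gamma_2}(\gamma_1\cdot\gamma_2)$ and $n_{\gamma_2} n'_{\gamma_1}(\gamma_2\cdot\gamma_1)$; these carry \emph{different} weights and count \emph{different} configurations (namely, $\zeta$ on $\gamma_1$ with $\zeta'$ on $\gamma_2$, versus $\zeta$ on $\gamma_2$ with $\zeta'$ on $\gamma_1$). They are not the same geometric crossing detected twice, so summing them does not double anything. If your local analysis really did localise each geometric crossing at a single switch and record it once with the correct sign, the grand total would be the algebraic intersection number, not twice it, contradicting \refdef{IntNumber_UsualTT}.

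Second, and relatedly, the local analysis itself cannot be made to work switch-by-switch: a strand of $\zeta$ entering $s$ along $\gamma_1$ and a strand of $\zeta'$ entering along $\gamma_2$ need not cross anywhere near $s$ at all --- they simply merge into $\gamma_0$ at different heights, and whether they ever cross is decided only at the switch where they later diverge. This is exactly the global constraint you flag as ``the hardest part'' and defer. The correct accounting, which is the paper's, is that each maximal common path of a strand of $\zeta$ and a strand of $\zeta'$ begins at a convergence switch and ends at a divergence switch, each contributing a local term of $\pm 1$; these two terms cancel when the pair does not cross and agree (each equal to the sign of the crossing) when it does. The factor of $2$ thus arises because every genuine crossing is registered at \emph{two distinct switches}, not because of a symmetry within the bilinear form at one switch. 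As written, your argument would need to be restructured around converge--diverge pairs rather than individual switches; the ``canonical fibrewise arrangement'' you hope for is precisely this pairing, so the gap is real rather than cosmetic.
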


\begin{proof}
Each intersection between $\zeta$, $\zeta'$ is observed as $\zeta, \zeta'$ first converging at a switch to run along some branches of $\tau$ together, then diverging at another switch.
(This may occur with $\zeta, \zeta'$ converging along those branches in the same or opposite direction.)
Thus the linear sum of local intersection numbers is twice the algebraic intersection number of $\zeta \cdot \zeta'$, and so agrees with $2\zeta\cdot\zeta'$. 
\end{proof}

More generally, for $S$ and oriented surface, we say a \emph{$k$-switch} on $S$ is a switch $v$ with $k+1$ edges meeting $v$, where one edge $\gamma_0$ is tangent in one direction, and the remaining $\gamma_1, \dots, \gamma_k$ are tangent in the opposite direction, with notation such that $\gamma_0, \dots, \gamma_k$ are in anticlockwise order using the orientation on $S$. We allow $1$-switches. The compatibility equation \refeqn{3way_compatibility} then naturally generalises to
\begin{equation}\label{Eqn:kway_compatibility}
\epsilon_{\gamma_0} n_{\gamma_0} + \cdots + \epsilon_{\gamma_k} n_{\gamma_k} = 0, 
\end{equation}
where as above, at a vertex $v$, an incident oriented branch $\gamma$ has $\epsilon_\gamma = 1$ or $-1$ accordingly as $\gamma$ is oriented away from or towards $v$.

\begin{definition}\label{Def:LocalIntNumberKSwitch}
The \emph{local intersection number} at a $k$-switch, with incident branches $\gamma_0$ on one side and $\gamma_1, \ldots, \gamma_k$ on the other, in anticlockwise order, is defined as follows.
\[
\epsilon_{\gamma_i} \epsilon_{\gamma_j} \, \gamma_i \cdot \gamma_j = \left\{ \begin{array}{ll}
1 & 1 \leq i<j \leq n \\
-1 & 1 \leq j < i \leq n \\
0  & \text{otherwise.} \end{array} \right.
\]
\end{definition}

When there are no stations and only 2-switches, this definition reduces to the definition of local intersection number of curves on trivalent train tracks, and is twice the usual intersection number of curves, as in \reflem{Intersection}. A $k$-switch with $k \geq 2$ naturally degenerates into a series of $k-1$ adjacent 2-switches, with the corresponding local intersection numbers.

\subsection{Train tracks with stations}
\label{Sec:train_tracks_stations}
We enhance our train tracks by adding a new type of 4-valent vertex called a \emph{station}, with different compatibility properties. At a station $v$, the tangent space $T_v S$ is split into four quadrants by a \emph{horizontal} and \emph{vertical} direction, and each quadrant contains one edge incident to $v$. After removing the horizontal line from $T_v S$, it is split into two subsets which we call its two \emph{ends}. After removing the vertical line from $T_v S$, it is split into two subsets which we call its two \emph{sides}. The four quadrants are distinguished by their end and side. See \reffig{Station}.

\begin{figure}
\begingroup%
  \makeatletter%
  \providecommand\color[2][]{%
    \errmessage{(Inkscape) Color is used for the text in Inkscape, but the package 'color.sty' is not loaded}%
    \renewcommand\color[2][]{}%
  }%
  \providecommand\transparent[1]{%
    \errmessage{(Inkscape) Transparency is used (non-zero) for the text in Inkscape, but the package 'transparent.sty' is not loaded}%
    \renewcommand\transparent[1]{}%
  }%
  \providecommand\rotatebox[2]{#2}%
  \newcommand*\fsize{\dimexpr\f@size pt\relax}%
  \newcommand*\lineheight[1]{\fontsize{\fsize}{#1\fsize}\selectfont}%
  \ifx\svgwidth\undefined%
    \setlength{\unitlength}{144bp}%
    \ifx\svgscale\undefined%
      \relax%
    \else%
      \setlength{\unitlength}{\unitlength * \real{\svgscale}}%
    \fi%
  \else%
    \setlength{\unitlength}{\svgwidth}%
  \fi%
  \global\let\svgwidth\undefined%
  \global\let\svgscale\undefined%
  \makeatother%
  \begin{picture}(1,0.64999998)%
    \lineheight{1}%
    \setlength\tabcolsep{0pt}%
    \put(0,0){\includegraphics[width=\unitlength,page=1]{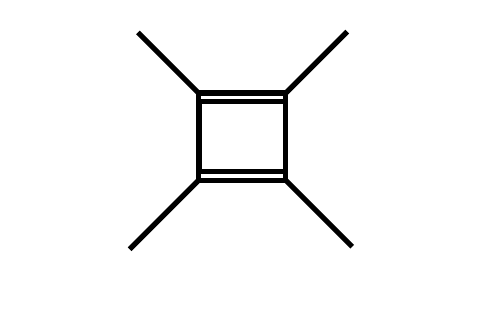}}%
    \put(0.0575192,0.59350546){\color[rgb]{0,0,0}\makebox(0,0)[lt]{\lineheight{1.25}\smash{\begin{tabular}[t]{l}$\hat{\overline{\gamma}}=\hat{\delta}$\end{tabular}}}}%
    \put(0.71448949,0.159827){\color[rgb]{0,0,0}\makebox(0,0)[lt]{\lineheight{1.25}\smash{\begin{tabular}[t]{l}$\gamma$\end{tabular}}}}%
    \put(0.19089047,0.159827){\color[rgb]{0,0,0}\makebox(0,0)[lt]{\lineheight{1.25}\smash{\begin{tabular}[t]{l}$\hat{\gamma}$\end{tabular}}}}%
    \put(0.70285827,0.59350547){\color[rgb]{0,0,0}\makebox(0,0)[lt]{\lineheight{1.25}\smash{\begin{tabular}[t]{l}$\overline{\gamma}=\delta$\end{tabular}}}}%
    \put(0.75896511,0.36586582){\color[rgb]{0,0,0}\makebox(0,0)[lt]{\lineheight{1.25}\smash{\begin{tabular}[t]{l}distinct ends\end{tabular}}}}%
    \put(0,0){\includegraphics[width=\unitlength,page=2]{Station.pdf}}%
    \put(0.31637184,0.02376174){\color[rgb]{0,0,0}\makebox(0,0)[lt]{\lineheight{1.25}\smash{\begin{tabular}[t]{l}distinct sides\end{tabular}}}}%
  \end{picture}%
\endgroup%

  \caption{Shows a station: $\gamma$, $\widehat{\gamma}$, $\overline{\gamma}$, $\widehat{\overline{\gamma}}$ lie in the four quadrants of $T_vS$. Sides are left and right, ends are top and bottom.}
  \label{Fig:Station}
\end{figure}

\begin{definition}
A \emph{station} is a 4-valent vertex $v$ in a graph smoothly embedded in a surface $S$, with distinguished horizontal and vertical directions as above, so that precisely one edge is incident to $v$ from each quadrant.
\end{definition}

\begin{definition}
An \emph{enhanced train track} on a surface $S$ is a graph $\tau$ smoothly embedded in $S$ such that each vertex is either a $k$-switch for some $k\geq 1$, or a station.
An \emph{orientation} on an enhanced train track is an orientation of each of its branches; note we allow arbitrary orientations on branches. 
\end{definition}
The enhanced train tracks we construct in this paper will lie on surfaces with corners.  Thus the embedding of the underlying graph may not be entirely smooth. However, smoothness only fails at points in the interior of branches, and this does not affect any of our arguments.

Given a branch $\gamma$ incident to a station $v$, we denote the other incident edges as follows. The edge approaching $v$ on the same side at the opposite end is denoted $\overline{\gamma}$;  the edge approaching $v$ at the same end but opposite side is denoted $\widehat{\gamma}$; the edge approaching $v$ at the opposite side and end is denoted $\widehat{\overline{\gamma}}$. Thus an overline swaps ends and a hat swaps sides.

A station has an unconventional compatibility condition. Let $v$ be a station, with adjacent edges $\gamma, \overline{\gamma}, \widehat{\gamma}, \widehat{\overline{\gamma}}$. For convenience, write $\delta = \overline{\gamma}$, so that the adjacent edges are $\gamma, \widehat{\gamma}, \delta, \widehat{\delta}$. If all the edges are oriented away from $v$, then we require the compatibility condition
\[
n_\gamma + n_{\widehat{\gamma}} = n_\delta + n_{\widehat{\delta}}.
\]
Note in particular that a curve on $\tau$, given by a sequence of branches passing through vertices as in the previous section, sometimes does but in general \emph{does not} satisfy this condition. In particular, a curve $c$ passing through $\gamma$ does satisfy the condition when it stays at one end of $v$, but if $c$ passes through $v$ from one end to the other, then the condition is violated.

However, if a curve $c$ \emph{changes orientation} as it passes through $v$ from one end to the other, then it \emph{does} satisfy the condition. Thus we will require curves to \emph{oscillate orientation} as they pass through stations from one end to the other; hence we call them \emph{oscillating curves} and make the following formal definitions.

As above, at a vertex $v$, an incident oriented branch $\gamma$ has $\epsilon_\gamma = 1$ or $-1$ accordingly as $\gamma$ is oriented away from or towards $v$.
\begin{definition}
\label{Def:abstract_oscillating_curve}
An \emph{abstract oscillating curve} on an oriented enhanced train track $\tau$ is a labelling of each branch $\gamma$ of $\tau$ by an integer $n_\gamma$, such that the following compatibility conditions are satisfied at each vertex $v$.
\begin{enumerate}
\item If $v$ is a $k$-switch with incident edges $\gamma_0, \gamma_1, \ldots, \gamma_k$, then
\[
\epsilon_{\gamma_0} n_{\gamma_0} + \cdots + \epsilon_{\gamma_k} n_{\gamma_k} = 0.
\]
\item If $v$ is a station with incident edges $\gamma, \widehat{\gamma}$ at one end and $\delta = \overline{\gamma}, \widehat{\delta}$ on the other, then
\[
\epsilon_\gamma n_\gamma + \epsilon_{\widehat{\gamma}} n_{\widehat{\gamma}} = \epsilon_\delta n_\delta + \epsilon_{\widehat{\delta}} n_{\widehat{\delta}}.
\]
\end{enumerate}
We denote an abstract oscillating curve by $\sum_\gamma n_\gamma \gamma$.
\end{definition}

A less abstract form of oscillating curve is defined by a cyclic sequence of oriented branches and vertices; however some of the orientations on the branches may be ``forwards" of ``backwards".
\begin{definition}\label{Def:OscillatingCurve}
An \emph{oscillating curve} of length $n$ on an enhanced train track is a cyclic sequence of oriented branches $\gamma_j$ and vertices $v_j$, with $j$ taken modulo $n$,
\[
v_0, \gamma_0, v_1, \gamma_1, v_2, \ldots, v_{n-1}, \gamma_{n-1}, v_n = v_0, \gamma_n = \gamma_0,
\]
such that the following conditions hold.
\begin{enumerate}
\item
Each branch $\gamma_j$ has one end at $v_j$ and one end at $v_{j+1}$. If $\gamma_j$ is oriented from $v_j$ to $v_{j+1}$ (resp. from $v_{j+1}$ to $v_j$) then $\gamma_j$ is \emph{forwards} (resp. \emph{backwards}).
\item
If $v_j$ is a $k$-switch, then $\gamma_{j-1}$ and $\gamma_j$ approach $v_j$ from opposite sides, and are both forwards or both backwards.
\item
If $v_j$ is a station, then:
\begin{enumerate}
\item if $\gamma_{j-1}$ and $\gamma_j$ are both at the same end of $v_j$, then they are both forwards or both backwards;
\item if $\gamma_{j-1}$ and $\gamma_j$ are at opposite ends of $v_j$, then one is forwards and one is backwards.
\end{enumerate}
\end{enumerate}
\end{definition}

(This definition assumes that the train track contains no loops and all incident edges at a station are distinct; this will be the case for all train tracks we construct.)

Thus, an oscillating curve is a closed curve along an enhanced train track $\tau$ composed of a series of arcs, each arc given orientation information; it must pass through an even number of stations from one end to the other, reversing orientation each time. It proceeds smoothly around $\tau$, except possibly at stations (and at corners of the surface, which in our construction will only lie in the interior of branches). It need not be simple.

We can alternatively regard an oscillating curve as a collection of oriented arcs along the train track, beginning and ending at stations. The arcs are ``created" or ``annihilated" in pairs at the stations. At creation, a pair of arcs departs from a station towards opposite ends; at annihilation, a pair of arcs arrives at a station from opposite ends.

An oscillating curve defines an abstract oscillating curve by counting the signed number of times it covers each branch. (As with ordinary train tracks, a finite collection of oscillating curves also defines an abstract oscillating curve by summation.) The conditions on orientations of oscillating curves at switches and stations ensure that the compatibility conditions of an abstract oscillating curve are satisfied. When there are no stations, an oscillating curve simply reduces to a closed curve which may be smoothly embedded on a train track, oriented forwards or backwards.

We now define local intersection numbers for enhanced train track branches and oscillating curves, generalising from the ordinary case. 
At stations $v$, local intersection numbers behave unconventionally. This behaviour is forced by the symplectic form; we will also give some topological motivation in \refsec{train_tracks_on_H}.
For the purposes of local intersection numbers, a station behaves like two 2-switches, one on each side, placed in an arrangement like \reffig{station_switches}.

\begin{figure}
\centering
\def\svgscale{0.5}
\begingroup%
  \makeatletter%
  \providecommand\color[2][]{%
    \errmessage{(Inkscape) Color is used for the text in Inkscape, but the package 'color.sty' is not loaded}%
    \renewcommand\color[2][]{}%
  }%
  \providecommand\transparent[1]{%
    \errmessage{(Inkscape) Transparency is used (non-zero) for the text in Inkscape, but the package 'transparent.sty' is not loaded}%
    \renewcommand\transparent[1]{}%
  }%
  \providecommand\rotatebox[2]{#2}%
  \newcommand*\fsize{\dimexpr\f@size pt\relax}%
  \newcommand*\lineheight[1]{\fontsize{\fsize}{#1\fsize}\selectfont}%
  \ifx\svgwidth\undefined%
    \setlength{\unitlength}{281.01258866bp}%
    \ifx\svgscale\undefined%
      \relax%
    \else%
      \setlength{\unitlength}{\unitlength * \real{\svgscale}}%
    \fi%
  \else%
    \setlength{\unitlength}{\svgwidth}%
  \fi%
  \global\let\svgwidth\undefined%
  \global\let\svgscale\undefined%
  \makeatother%
  \begin{picture}(1,0.59379866)%
    \lineheight{1}%
    \setlength\tabcolsep{0pt}%
    \put(0,0){\includegraphics[width=\unitlength,page=1]{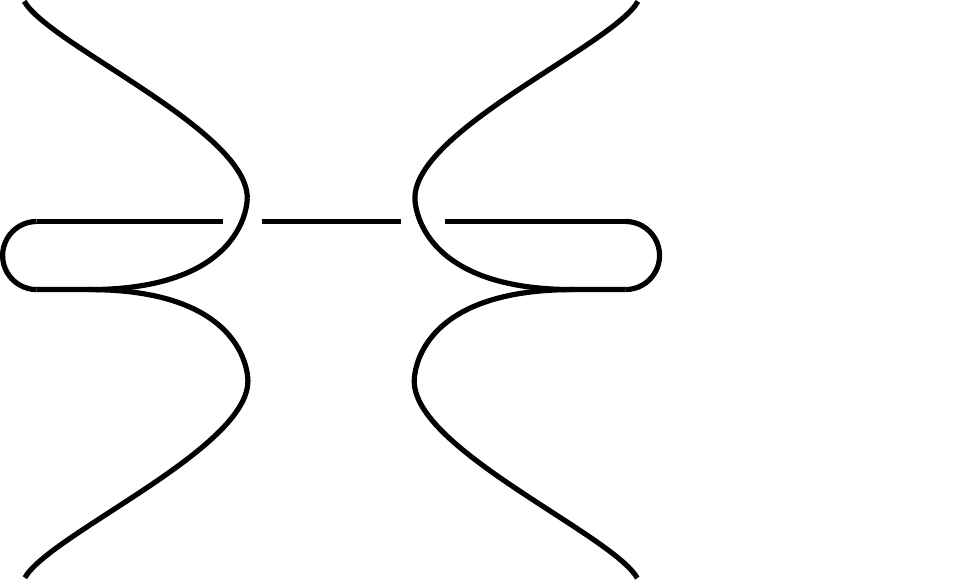}}%
    \put(0.57476049,0.08512666){\color[rgb]{0,0,0}\makebox(0,0)[lt]{\lineheight{1.25}\smash{\begin{tabular}[t]{l}$\gamma$\end{tabular}}}}%
    \put(0.55602836,0.45286169){\color[rgb]{0,0,0}\makebox(0,0)[lt]{\lineheight{1.25}\smash{\begin{tabular}[t]{l}$\overline{\gamma} = \delta$\end{tabular}}}}%
    \put(0.02727189,0.08938399){\color[rgb]{0,0,0}\makebox(0,0)[lt]{\lineheight{1.25}\smash{\begin{tabular}[t]{l}$\widehat{\gamma}$\end{tabular}}}}%
    \put(0.0207462,0.45286169){\color[rgb]{0,0,0}\makebox(0,0)[lt]{\lineheight{1.25}\smash{\begin{tabular}[t]{l}$\widehat{\delta}$\end{tabular}}}}%
  \end{picture}%
\endgroup%

  \caption{The intersection form on a station behaves unconventionally like two 2-switches.}
  \label{Fig:station_switches}
\end{figure}

Denote the four incident branches by $\gamma$ and $\delta = \overline{\gamma}$ on one side, and $\widehat{\gamma}$ and $\widehat{\delta}$ on the other, and moreover so that in anticlockwise order they are $\gamma, \delta, \widehat{\delta}, \widehat{\gamma}$. See \reffig{Station}. If these four branches are all oriented away from $v$, define
\begin{equation}
\label{Eqn:station_intersection}
\gamma \cdot \delta = -1, \quad
\delta \cdot \gamma = 1, \quad
\widehat{\delta} \cdot \widehat{\gamma} = -1, \quad
\widehat{\gamma} \cdot \widehat{\delta} = 1.
\end{equation}

The same equations hold if all branches are oriented into $v$.
Adjusting the above by signs $\epsilon_\gamma = 1$ or $-1$ accordingly as they point away from or towards $v$ we obtain the following.

\begin{definition}
\label{Def:intersection_number_branches}
The \emph{local intersection number} of two branches meeting at a station in an oriented enhanced train track is defined as follows. If incident branches in anticlockwise order are $\gamma, \delta, \widehat{\delta}, \widehat{\gamma}$ as above, then
\begin{gather*}
\epsilon_\gamma \epsilon_\delta \; \gamma \cdot \delta = -1, \quad
\epsilon_\gamma \epsilon_\delta \; \delta \cdot \gamma = 1, \\
\epsilon_{\widehat{\gamma}} \epsilon_{\widehat{\delta}} \; \widehat{\delta} \cdot \widehat{\gamma} = -1, \quad
\epsilon_{\widehat{\gamma}} \epsilon_{\widehat{\delta}} \; \widehat{\gamma} \cdot \widehat{\delta} = 1,
\end{gather*}
and all other local intersection numbers between them are zero.

Local intersection numbers of an enhanced train track are then given by the above numbers at stations, and \refdef{LocalIntNumberKSwitch} on $k$-switches. 
\end{definition}

We can regard an oscillating curve as a curve on an ordinary train track, where $k$-switches have been degenerated into $2$-switches, and stations have been replaced with two 2-switches, one on each side. Then the same argument shows that the intersection number of the oscillating curves is equal to twice the usual intersection number of curves, although we must be careful to take into account the reversal of orientation in oscillating curves when proceeding through a station from one end to another (which can lead to unconventional results).
This leads to the following definition, generalising \refdef{IntNumber_UsualTT}.

\begin{definition}
\label{Def:intersection_number}
Let $\zeta = \sum_\gamma n_\gamma \gamma$ and $\zeta' = \sum_\gamma n'_\gamma \gamma'$ be abstract oscillating curves. Their \emph{intersection number} is given by
\[
  2 \zeta\cdot \zeta' = \left( \sum_\gamma n_\gamma \gamma \right) \cdot \left( \sum_\gamma n'_\gamma \gamma \right),
\]
where the right hand side is obtained by linear extension of \refdef{intersection_number_branches}.
\end{definition}

\section{Tetrahedra, polyhedra, and {H}eegaard splittings}\label{Sec:Tetrahedra}

Let $M$ be the interior of a compact manifold $\overline{M}$ with nonempty boundary consisting of $n_\mfc \geq 1$ components.

Let $\calT$ be an \emph{ideal} triangulation of $M$.
That is,
a decomposition of $M$ into tetrahedra with vertices removed; truncating the vertices yields a decomposition of $\overline{M}$ into truncated tetrahedra, such that the truncated vertices form a 2-dimensional triangulation of $\bdy\overline{M}$.
This applies regardless of whether the $\bdy \overline{M}$ consists of tori or higher genus surfaces.
We call the edges \emph{ideal} edges and the vertices before truncation \emph{ideal vertices}.

Such a triangulation is known to exist by work of Moise and Bing~\cite{Bing, Moise}. Concretely, it can often be obtained by computer~\cite{Regina, SnapPy}. As is common, we regard $\overline{M}$ as a deformation retract of $M$, obtained by removing an open neighbourhood of each end. We regard these end neighbourhoods as $\partial \overline{M} \times (0, \infty)$.

In this section, from the ideal triangulation $\calT$ we explicitly construct a Heegaard splitting of $M$ and a cell decomposition of the Heegaard handlebody.

\subsection{Heegaard decomposition}

Consider disjoint open tubular neighbourhoods of the ideal edges of $\calT$. These tubular neighbourhoods, together with the end neighbourhoods $\partial \overline{M} \times (0, \infty)$, form an open compression body. The complement of this compression body is a compact handlebody, given by $\overline{M}$ with tubular neighbourhoods of edges of $\calT$ removed. The handlebody may be regarded as having a 3-dimensional 0-cell for each tetrahedron of $\calT$, and a 1-handle for each pair of glued faces of $\calT$. The handlebody and compression body form a Heegaard splitting of $M$, and their common boundary is a Heegaard surface, which we denote by $H$. 

We regard $H$ as built from two subsurfaces: a surface $\bdy\overline{M}\cap H$ in general consisting of $n_\mfc$ components, of various genera, with punctures corresponding to intersections of ideal edges of 
$\calT$ with $\bdy\overline{M}$; and a collection of annuli corresponding to the boundary components of tubes about the ideal edges of $\calT$. 

\subsection{Cell decomposition}

We give a cell decomposition of $H$ and the handlebody bounded by $H$.

First, as $\overline{M}$ is obtained by truncating $M$ (removing cusp neighbourhoods), we may truncate each ideal tetrahedron $\tet$ of $\calT$ and regard $\overline{M}$ as obtained by gluing truncated tetrahedra. Each truncated tetrahedron $\overline{\tet}$ is obtained from an ideal tetrahedron $\tet$ by removing a neighbourhood of each ideal vertex.

\begin{figure}
  \includegraphics[scale=1.4]{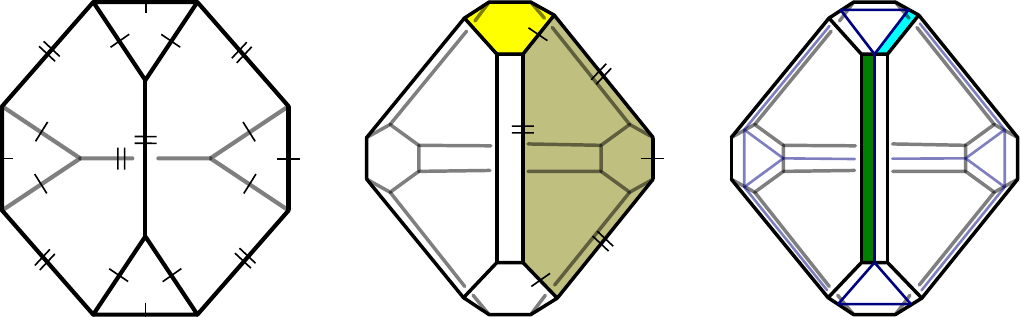}
  \caption{Left to right: A tetrahedron $\overline{\tet}$ with truncated vertices; short and long edges are shown with one and two ticks respectively. A polyhedron $\p$ obtained from $\overline{\tet}$ by truncating long edges; a short hexagon is shaded yellow and a long hexagon is shaded olive, with alternating long and short edges indicated. The cell decomposition on $\p$; a short rectangle is shaded aqua and long rectangle is shaded green.}
  \label{Fig:CellDecomposition}
\end{figure}

A truncated tetrahedron $\overline{\tet}$ is shown in \reffig{CellDecomposition} on the left. It has 12 vertices, 3 from each ideal vertex of $\tet$. It has 8 faces, consisting of 4 triangular faces, one from each ideal vertex of $\tet$, and $4$ hexagonal faces, one from each face of $\tet$. It has $16$ edges, consisting of $12$ sides of triangular faces, which we call \emph{short edges} (denoted with single ticks in \reffig{CellDecomposition} left); and $4$ edges which are truncated edges of $\tet$, which we call \emph{long edges} (denoted with double ticks in \reffig{CellDecomposition} left). Observe that short and long edges alternate around the boundary of each hexagonal face.

The triangular faces of the truncated tetrahedra form a triangulation of $\partial \overline{M}$, 
which we call the \emph{boundary triangulation}.
Each vertex of the boundary triangulation lies at an end of an ideal edge of an ideal tetrahedron $\tet$, or equivalently at an end of a long edge of a truncated tetrahedron $\overline{\tet}$. Each edge of the 
boundary triangulation is a short edge of a truncated tetrahedron $\overline{\tet}$, and lies at an end of a face of an ideal tetrahedron $\tet$. 

The handlebody of the Heegaard splitting is obtained from $\overline{M}$ by removing a tubular neighbourhood of each edge of $\calT$.  Further truncate our tetrahedra by removing a neighbourhood of each long edge. This turns the truncated tetrahedron $\overline{\tet}$ into a polyhedron $\p$, as in \reffig{CellDecomposition} middle. In the sequel, we use the notation $\overline{\tet}$ to refer to a \emph{truncated tetrahedron} and $\p$ to refer to a \emph{polyhedron} for ease of notation. Each triangular face of $\overline{\tet}$ has a neighbourhood of each vertex removed to become a hexagon in $\p$, which we call a \emph{short} hexagon; one is shaded yellow in \reffig{CellDecomposition} middle. Each hexagonal face of $\overline{\tet}$ becomes a smaller hexagonal face in $\p$, which we call a \emph{long hexagon} or just \emph{hexagon}. Each long hexagon can still be regarded as having alternating short and long edges around its boundary; one is shaded olive in \reffig{CellDecomposition} middle.

To obtain the cell decomposition of $\p$, each long edge of $\overline{\tet}$ is replaced by two rectangular faces along the length of the long edge, which we call \emph{long rectangles}, as shown in \reffig{CellDecomposition} right where one is shaded green.
For each short hexagon, we take a cell decomposition consisting of a triangle (a smaller version of triangular face from $\overline{\tet}$) and three rectangles, which we call \emph{short rectangles}; one is shaded aqua in \reffig{CellDecomposition} right. The remaining hexagonal faces of $\overline{\tet}$, which we call \emph{long hexagons} (one is shaded olive in \reffig{CellDecomposition} middle), remain hexagonal in $\p$
but become smaller, and still have long and short edges. We call such a face a hexagonal face of $\p$. Effectively, each long edge of $\overline{\tet}$ is replaced with two long rectangles, and each short edge of $\overline{\tet}$ is replaced with one short rectangle.

We may then regard each hexagonal face of $\p$ as having a 3-dimensional regular neighbourhood in $\p$ which intersects the boundary of $\p$ in six adjacent rectangles, alternating long and short. These regular neighbourhoods meet along edges parallel to the removed long edges of $\overline{\tet}$. Each rectangle (long or short) is then associated to a unique hexagonal face and forms part of its regular neighbourhood.

The effect on the boundary triangulation of replacing tetrahedra $\overline{\tet}$ with polyhedra $\p$ is to remove a neighbourhood of each vertex, and replace each triangle with a smaller triangle and three short rectangles. Effectively, each triangle has each of its sides replaced by a short rectangle, and these three short rectangles together bound the smaller triangle. A hexagonal face $\hexagon$ of $\p$ intersects the cusp triangulation in three edges, corresponding to sides of short rectangles.
See \reffig{BoundaryCellDecomposition}. 

The polyhedra $\p$ give a cell decomposition of the Heegaard handlebody of $\overline{M}$, and hence also a cell decomposition of the Heegaard surface $H$. The cell decomposition of $H \cap \bdy \overline{M}$ consists of triangles and short rectangles: one triangle and three short rectangles for each truncated ideal vertex of $\calT$. There is one short rectangle for each ideal vertex of each face of each tetrahedron of $\calT$. The cell decomposition of $\bdy H - \bdy \overline{M}$ consists of long rectangles, two for each edge of each tetrahedron of $\calT$. \reffig{BoundaryCellDecomposition} shows a cusp triangulation arising in a triangulation of the Whitehead link complement (discussed in \cite{HowieMathewsPurcell} and in section \refsec{Matrices} below), and the corresponding cell decomposition of a component of $H \cap \bdy \overline{M}$.

\begin{figure}
  \includegraphics[scale=1]{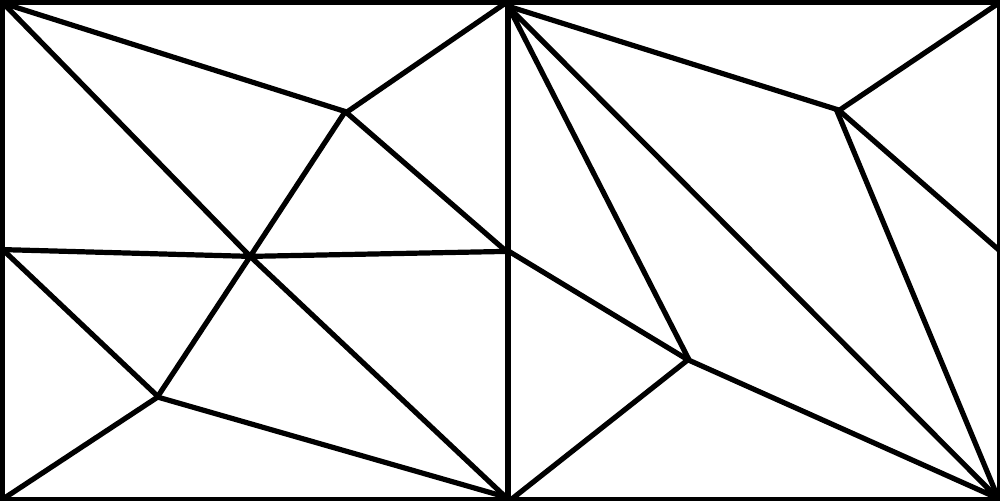}
	
	\bigskip
	
	\includegraphics[scale=1]{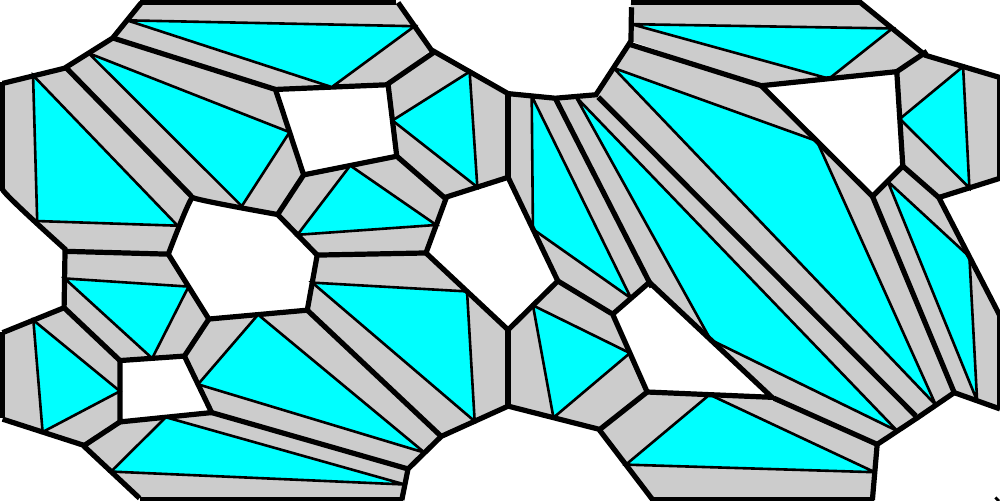}
  \caption{Top to bottom: Triangulation of a torus component of the boundary of the Whitehead link complement; opposite sides of the rectangle are identified. Cell decomposition of the corresponding component of $H \cap \bdy \overline{M}$; triangles in blue and short rectangles in grey.}
  \label{Fig:BoundaryCellDecomposition}
\end{figure}

\section{Polyhedra and their $abc$s}
\label{Sec:polyhedra_abcs}

Following a convention essentially going back at least to Neumann-Zagier \cite{NeumannZagier}, we label each edge of an ideal tetrahedron $\tet$ as an \emph{$a$-edge}, \emph{$b$-edge} or \emph{$c$-edge}, such that opposite edges have the same label, and around each vertex, as viewed from outside $\tet$, the three incident edges are $a$-, $b$- and $c$-edges in anticlockwise order. In the sequel, when referring to ``anticlockwise" or ``clockwise" order, we always mean as viewed from outside the relevant tetrahedron or polyhedron or manifold. 

The edge labelling of $\tet$ implies that around each face of $\tet$, the edges are labelled $a,b,c$ in clockwise order. See \reffig{abcs} left.

We can then label other parts of our truncated tetrahedron $\overline{\tet}$ (\reffig{abcs} right) and polyhedron $\p$ (\reffig{abcs2}) with various combinations of the letters $a,b,c$ as follows.

\begin{figure}
\centering
\def\svgscale{2}
\import{figures/}{TetrToPolyhedron_abcs.pdf_tex}
  \caption{Left to right: Tetrahedron $\tet$ with edge labellings. Truncated tetrahedron $\overline{\tet}$ with edge labellings.}
  \label{Fig:abcs}
\end{figure}

\begin{figure}
\centering
\def\svgscale{3.2}
\import{figures/}{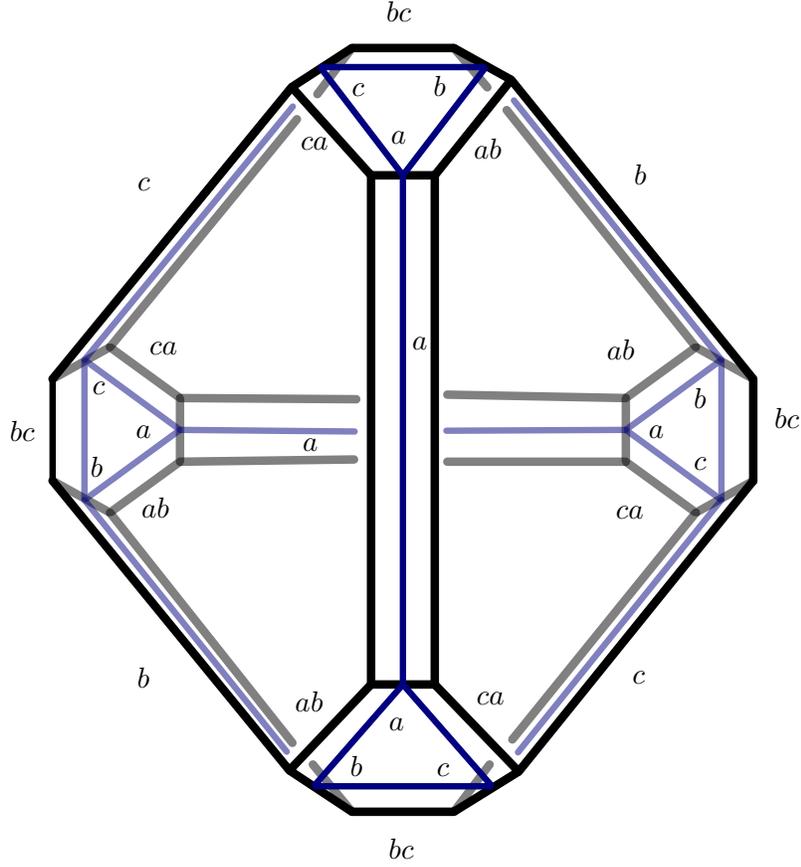}
  \caption{Polyhedron $\p$ with labellings of rectangles, edges, and vertices of triangular faces.}
  \label{Fig:abcs2}
\end{figure}

Each long edge of $\overline{\tet}$ and long rectangle of $\p$ runs along an edge of $\tet$, and hence is labelled $a$, $b$ or $c$. Each triangular face of $\overline{\tet}$ (i.e. each triangle of the cusp triangulation) has vertices which are endpoints of long edges, and hence are labelled by $a,b,c$ in anticlockwise order around a triangular face (as always, as seen from outside the truncated tetrahedron). An edge of a triangular face of $\overline{\tet}$ has endpoints labelled $a$ and $b$, or $b$ and $c$, or $c$ and $a$. Accordingly, the edges of each triangular face of $\overline{\tet}$ are labelled $ab,bc,ca$, in anticlockwise order. Each triangular face of $\p$, being a shrunken triangular face of $\overline{\tet}$ then also has vertices labelled $a,b,c$ and edges labelled $ab,bc,ca$, in anticlockwise order. Each short rectangle of $\p$ runs along an edge of a triangular face, and hence obtains a labelling $ab$, $bc$ or $ca$. Similarly, each short edge of a hexagonal face of $\p$ is labelled $ab$, $bc$ or $ca$.

Each hexagonal face $\hexagon$ of $\p$ has three long edges alternating with three short edges: the three long edges are labelled $a,b,c$ in clockwise order as viewed from outside, and the three short edges are labelled $ab,bc,ca$ in clockwise order. The six edges are thus labelled $a,ab,b,bc,c,ca$ in clockwise order.

A notion of cyclic ordering will be useful in the sequel.
\begin{definition}[Cyclic ordering] \
\label{Def:cyclic_alphabetical_order}
\begin{enumerate}
\item
If $(k,l,m)$ is a cyclic permutation of $(a,b,c)$ or $(ab,bc,ca)$, then we say $k,l,m$ are in \emph{cyclic alphabetical order}.
\item
If $(k,l,m)$ is a cyclic permutation of $(c,b,a)$ or $(ab,ca,bc)$, then we say $k,l,m$ are in \emph{cyclic anti-alphabetical order}.
\item
If $(k,l)$ can be completed to $(k,l,m)$ in cyclic alphabetical order, then we say $k,l$ are in \emph{cyclic alphabetical order}.
\item
If $(k,l)$ can be completed to $(k,l,m)$ in cyclic anti-alphabetical order, then we say $k,l$ are in \emph{cyclic anti-alphabetical order}.
\end{enumerate}
\end{definition}

Thus, for instance, $k,l$ are in cyclic alphabetical order precisely when
\[ (k,l) = (a,b), (b,c), (c,a), (ab,bc), (bc,ca) \text{ or } (ca,ab). \]
And if $k,l,m$ are in cyclic alphabetical order then $k,l$ and $l,m$ and $m,k$ are in cyclic alphabetical order.

The following notation allows us to simplify some expressions.
\begin{definition}
\label{Def:epsilon}
\[
\epsilon_{k,l} =
\begin{cases}
1 & \text{if $k,l$ are in cyclic alphabetical order,} \\
-1 & \text{if $k,l$ are in cyclic anti-alphabetical order,} \\
0 & \text{otherwise.}
\end{cases}
\] 
\[
\epsilon_{k,l,m} =
\begin{cases}
1 & \text{if $k,l,m$ are in cyclic alphabetical order,} \\
-1 & \text{if $k,l,m$ are in cyclic anti-alphabetical order,} \\
0 & \text{otherwise.}
\end{cases}
\] 

\end{definition}

\subsection{Train tracks on a triangulation}
\label{Sec:train_tracks_on_H}

We now build an oriented enhanced train track on the boundary of each polyhedron $\p$ and on the Heegaard surface $H$ associated with a triangulation. 

As we have seen, each truncated vertex of an ideal tetrahedron $\tet$ of $\calT$ corresponds to four faces of $\p$: a triangle, and three short rectangles surrounding it. In each such set of four faces, we place an oriented enhanced train track as in \reffig{TrainTrack1}.  That is, inside a triangle $\Delta$, we place three 2-switches, one just inside the midpoint of each edge of the triangle. The switches are joined by branches running around the vertices of the original cusp triangulation, oriented anticlockwise around them. At each switch, place a further branch that runs transversely across the nearby edge of the triangle into the adjacent short rectangle, oriented into the short rectangle. 

The branches in $\Delta$ running around the vertices of the cusp triangulation labelled $a,b,c$ are denoted $\gamma_{\Delta(a)}$, $\gamma_{\Delta(b)}$, $\gamma_{\Delta(c)}$ respectively. The branches running out across the edges of the triangular face labelled $ab,bc,ca$ are denoted $\gamma_{\Delta(ab)}$, $\gamma_{\Delta(bc)}$, $\gamma_{\Delta(ca)}$ respectively. These branches run out of $\Delta$ into short rectangles, and each short rectangle is adjacent to a unique hexagonal face of $\p$. If $\Delta(ab)$ runs into a rectangular face adjacent to hexagon $\hexagon$, we also write $\gamma_{\hexagon(ab)!}$ for $\gamma_{\Delta(ab)}$. See \reffig{TrainTrack1}. Thus the branches labelled $\gamma_{\Delta(k)}$ over all triangles $\Delta$ and $k \in \{ab,bc,ca\}$ are precisely the branches labelled $\gamma_{\hexagon(k)!}$ over all hexagons $\hexagon$ and $k \in \{ab,bc,ca\}$. Their intersection numbers, as per \refdef{intersection_number_branches}, are then given by the following.

\begin{lemma}
\label{Lem:intersection_numbers_triangles}
In triangle $\Delta$, intersection numbers are given by
\begin{gather*}
\gamma_{\Delta(a)} \cdot \gamma_{\Delta(b)} 
= \gamma_{\Delta(b)} \cdot \gamma_{\Delta(c)} 
= \gamma_{\Delta(c)} \cdot \gamma_{\Delta(a)} = 1, \\
\gamma_{\Delta(b)} \cdot \gamma_{\Delta(a)}
= \gamma_{\Delta(c)} \cdot \gamma_{\Delta(b)}
= \gamma_{\Delta(a)} \cdot \gamma_{\Delta(c)} = -1,
\end{gather*}
and all other intersection numbers are $0$.

Thus by \refdef{epsilon}, the intersection numbers can be expressed as:
  \[
  \pushQED{\qed}
  \gamma_{\Delta(k)} \cdot \gamma_{\Delta(l)} = \epsilon_{k,l}.  \qedhere
  \popQED
\]
\end{lemma}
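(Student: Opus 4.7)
The plan is to apply Definition~\ref{Def:LocalIntNumberKSwitch} at each of the three $2$-switches lying inside $\Delta$, and to observe that for any two distinct labels $k, l \in \{a, b, c\}$ the branches $\gamma_{\Delta(k)}$ and $\gamma_{\Delta(l)}$ share exactly one common switch, namely the one sitting on the edge of $\Delta$ labelled $kl$ (or equivalently $lk$). Hence the abstract intersection number $\gamma_{\Delta(k)} \cdot \gamma_{\Delta(l)}$ comes entirely from the local intersection at this single switch. Moreover the remaining pairings --- those involving a transverse branch $\gamma_{\Delta(kl)}$ --- vanish automatically, since $\gamma_{\Delta(kl)}$ sits alone on the outer side of its switch and thus plays the role of $\gamma_0$ in Definition~\ref{Def:LocalIntNumberKSwitch}, which assigns $0$ to every local intersection involving $\gamma_0$.

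I would work out the switch on edge $ab$ in detail and then appeal to cyclic symmetry for the other two. At this switch, the three incident branches are $\gamma_{\Delta(ab)}$ on the outer side, together with $\gamma_{\Delta(a)}$ and $\gamma_{\Delta(b)}$ on the inner side, bending respectively toward vertices $a$ and $b$. Viewed from outside the polyhedron, where $a, b, c$ appear anticlockwise, the two inner branches sit on opposite sides of the switch, and the anticlockwise cyclic enumeration around the switch starting from $\gamma_0 = \gamma_{\Delta(ab)}$ is $\gamma_1 = \gamma_{\Delta(b)}$, $\gamma_2 = \gamma_{\Delta(a)}$. The convention that each $\gamma_{\Delta(k)}$ runs anticlockwise about vertex $k$ then forces $\gamma_{\Delta(a)}$ to run from the switch on $ab$ to the switch on $ca$, $\gamma_{\Delta(b)}$ from the switch on $bc$ to the switch on $ab$, and $\gamma_{\Delta(c)}$ from the switch on $ca$ to the switch on $bc$. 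Consequently, at the switch on $ab$ we have $\epsilon_{\gamma_{\Delta(a)}} = +1$ (oriented away), $\epsilon_{\gamma_{\Delta(b)}} = -1$ (oriented toward), and $\epsilon_{\gamma_{\Delta(ab)}} = +1$ (oriented away by construction).

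Plugging these into Definition~\ref{Def:LocalIntNumberKSwitch} gives $\epsilon_{\gamma_1}\epsilon_{\gamma_2}\,\gamma_1 \cdot \gamma_2 = (-1)(+1)\,\gamma_{\Delta(b)} \cdot \gamma_{\Delta(a)} = 1$, so $\gamma_{\Delta(b)} \cdot \gamma_{\Delta(a)} = -1$ and $\gamma_{\Delta(a)} \cdot \gamma_{\Delta(b)} = +1 = \epsilon_{a,b}$. The cyclic symmetry $a \mapsto b \mapsto c \mapsto a$ of the whole configuration then yields the analogous identities at the remaining two switches, producing $\gamma_{\Delta(b)} \cdot \gamma_{\Delta(c)} = \gamma_{\Delta(c)} \cdot \gamma_{\Delta(a)} = 1$ along with their antisymmetric counterparts. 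Comparison with Definition~\ref{Def:epsilon} then packages these cases into the single formula $\gamma_{\Delta(k)} \cdot \gamma_{\Delta(l)} = \epsilon_{k,l}$. The main potential obstacle is pinning down the anticlockwise cyclic order at a switch together with the corresponding orientation signs; once these are read off carefully from the construction, the remainder is routine application of the definition and cyclic symmetry.
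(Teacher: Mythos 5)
Your proposal is correct and is precisely the verification the paper treats as immediate (the lemma is stated with a \qed and no written proof): apply \refdef{LocalIntNumberKSwitch} at each of the three 2-switches, note that $\gamma_{\Delta(k)}$ and $\gamma_{\Delta(l)}$ share exactly one switch while pairings with the outgoing branch $\gamma_0=\gamma_{\Delta(kl)}$ vanish, and read off the anticlockwise order and the signs $\epsilon_\gamma$ from the construction. Your determination of the cyclic order ($\gamma_1=\gamma_{\Delta(b)}$, $\gamma_2=\gamma_{\Delta(a)}$ at the $ab$-switch) and of the orientations (each $\gamma_{\Delta(k)}$ leaving one switch and entering the next) is consistent with the construction in \refsec{train_tracks_on_H} and reproduces the stated values $\gamma_{\Delta(k)}\cdot\gamma_{\Delta(l)}=\epsilon_{k,l}$.
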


\begin{figure}
	\centering
	\def\svgscale{1.5}
  \import{figures/}{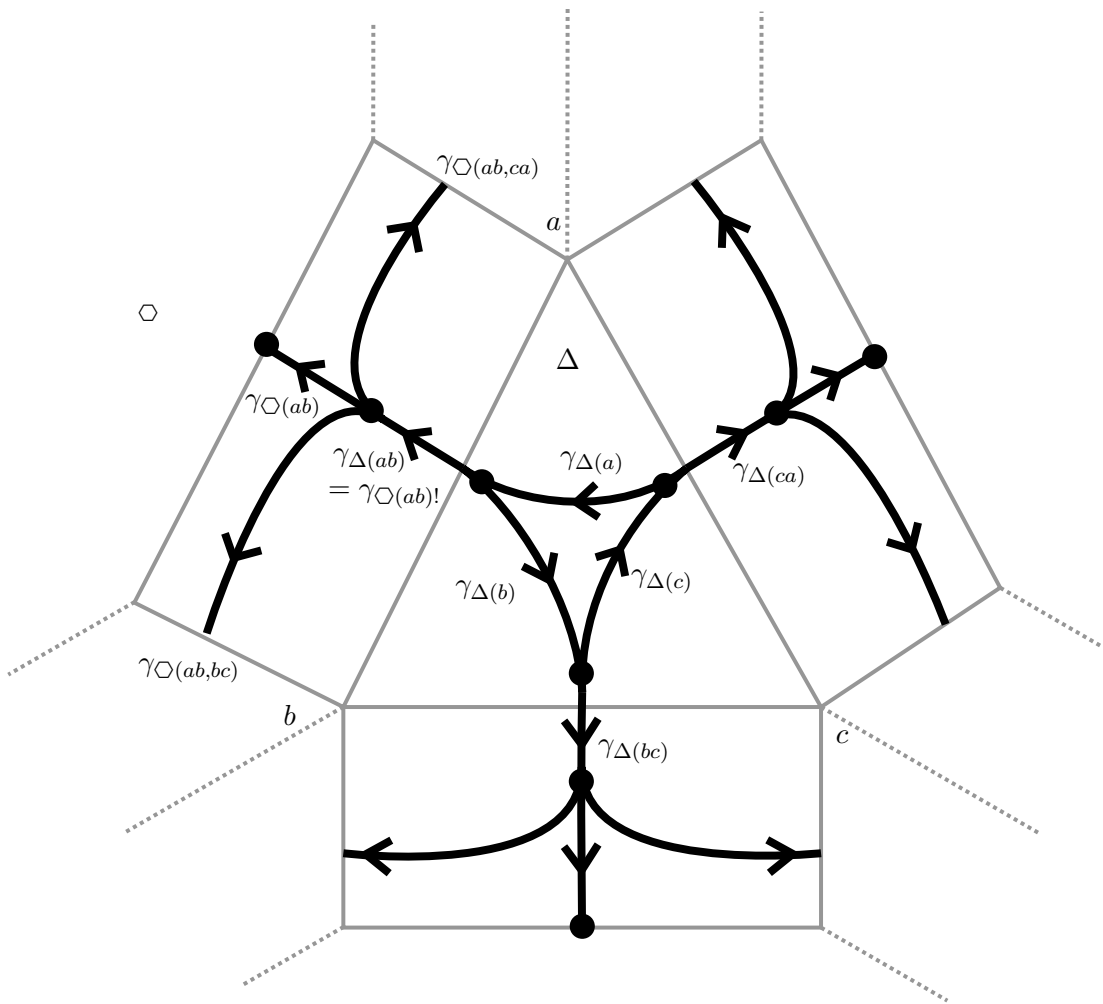} \\
  \caption{Train tracks on a triangular face $\Delta$ and adjacent short rectangles of the Heegaard surface $H$. Switches are denoted by large dots. The hexagonal face adjacent to the short rectangle labelled $ab$ is denoted $\hexagon$. (Note $\hexagon$ is not part of $H$.) Sides of adjacent long rectangles are dotted.}
  \label{Fig:TrainTrack1}
\end{figure}

\begin{figure}
	\def\svgscale{0.6}
  \import{figures/}{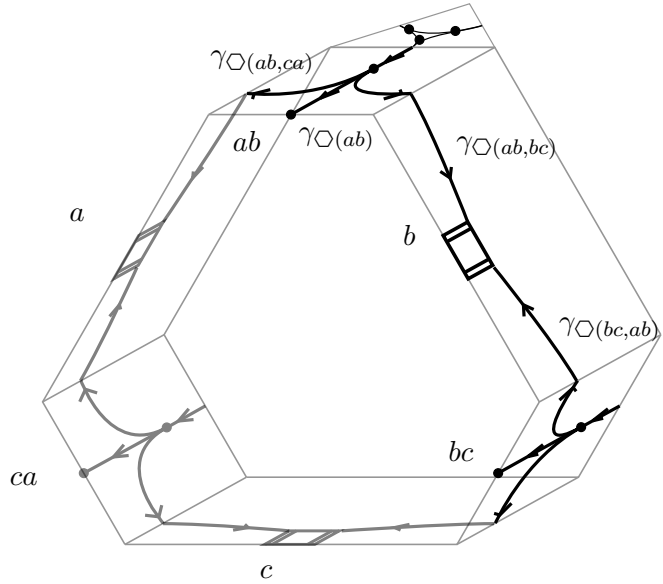} \\
  \caption{Train tracks in a neighbourhood of a hexagonal face $\hexagon$ in a polyhedron $\p$. Edges of $\hexagon$ are given $abc$-labels. Stations are shown along long rectangles. A triangular face adjacent to the $ab$-short rectangle is also shown.}
  \label{Fig:TrainTrack2}
\end{figure}

\begin{figure}
	\includegraphics[scale=0.6]{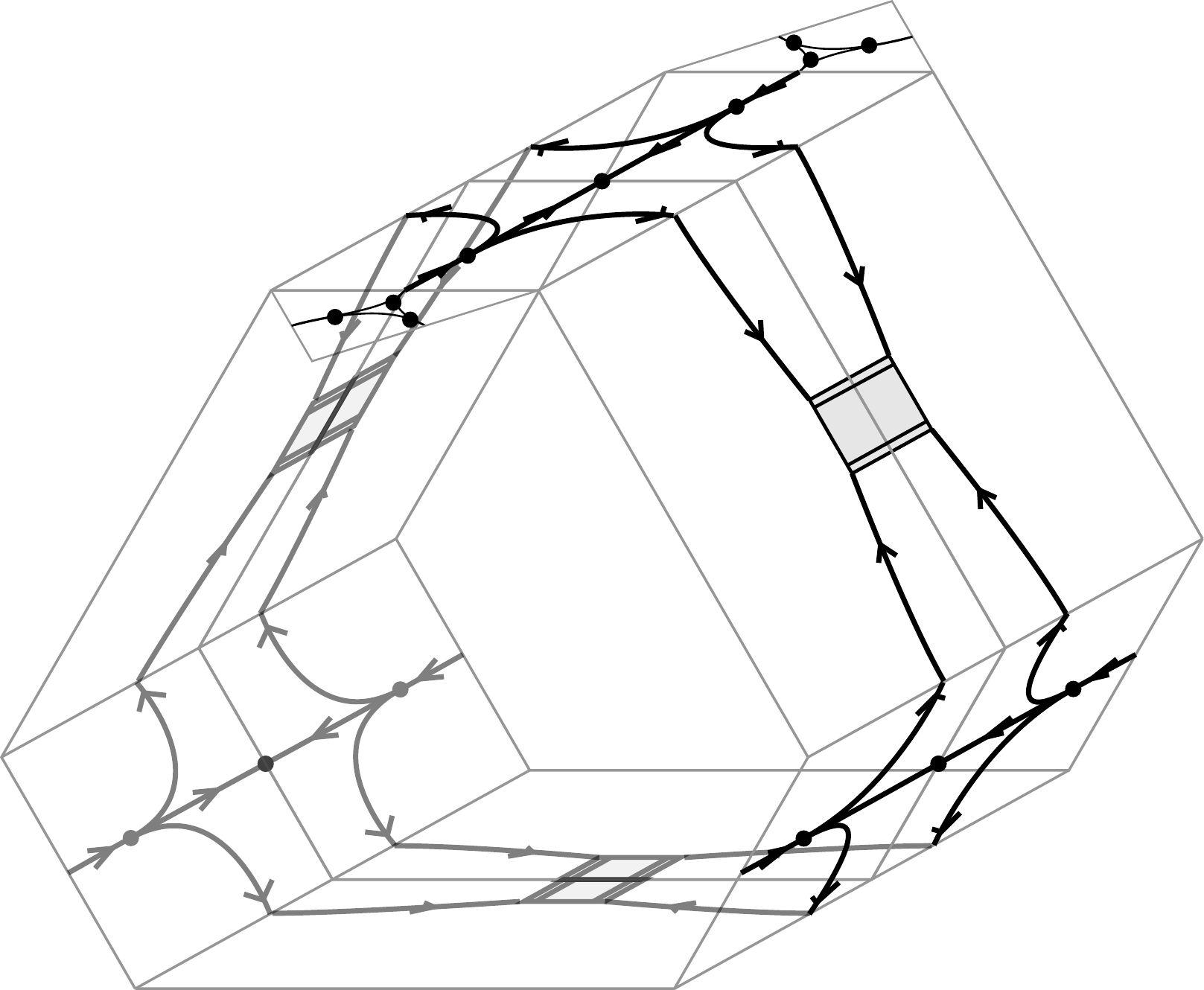}
  \caption{Train tracks near a glued pair of hexagonal faces. There are 1-switches and stations on the common boundary.}
  \label{Fig:hexagons_glued}
\end{figure}

In each short rectangle adjacent to $\Delta$, we place a 3-switch and a 1-switch as follows. As discussed above, each short rectangle lies in a regular neighbourhood of a hexagonal face of $\p$, which we denote $\hexagon$. The train track branch from $\Delta$ (which is $\gamma_{\Delta(ab)} = \gamma_{\hexagon(ab)!}$, $\gamma_{\Delta(bc)}= \gamma_{\hexagon(bc)!}$ or $\gamma_{\Delta(ca)} = \gamma_{\hexagon(ca)!}$) runs into the 3-switch on one side, oriented into the switch. 
Three branches run out of the other side, oriented away from the switch: a left, central and right branch. The left and right branches proceed into adjacent long rectangles, and the central branch proceeds to the opposite side of the short rectangle.

The left branch meets an edge of a long rectangle also adjacent to $\hexagon$, and proceeds into that long rectangle. If the long rectangle has type $a$ (resp.\ $b$,$c$), then the branch starts near the short edge of $\hexagon$ labeled $ca$ (resp.\ $ab$,$bc$) and proceeds towards the short edge of $\hexagon$ labeled $ab$ (resp.\ $bc$,$ca$), so we denote this branch $\gamma_{\hexagon(ca,ab)}$ (resp.\ $\gamma_{\hexagon(ab,bc)}$, $\gamma_{\hexagon(bc,ca)}$).

The right branch also meets an edge of a long rectangle adjacent to $\hexagon$ and proceeds into that long rectangle. If the long rectangle has type $a$ (resp.\ $b$, $c$), then the branch starts near the short side of $\hexagon$ labelled $ab$ (resp.\ $bc$, $ca$) and proceeds towards the short edge of $\hexagon$ labelled $ca$ (resp. $ab$, $bc$), so we denote this branch $\gamma_{\hexagon(ab,ca)}$ (resp.\ $\gamma_{\hexagon(bc,ab)}$, $\gamma_{\hexagon(ca,bc)}$).

The central branch runs straight out to a point on the final side of the short rectangle, which is also a short side of $\hexagon$. At this point we place a 1-switch. The short rectangle and short side of $\hexagon$ have a label $ab$, $bc$, or $ca$; we denote this central branch $\gamma_{\hexagon(ab)}$, $\gamma_{\hexagon(bc)}$ or $\gamma_{\hexagon(ca)}$ accordingly. Note that in the polyhedron $\p$, only one branch is incident with the 1-switch. However, when the polyhedra are glued together, the 1-switch also lies on the boundary of another polyhedron $\widehat{\p}$ glued to $\p$, and the corresponding train track branch on $\widehat{\p}$ is also incident with the 1-switch.
See \reffig{hexagons_glued}.

Applying \refdef{intersection_number_branches}, the local intersection numbers are then immediately given by the following lemma.
\begin{lemma}
\label{Lem:intersection_numbers_short_rectangles}
Inside each short rectangle of $\p$ with label $k \in \{ab,bc,ca\}$, adjacent to triangle $\Delta$ and hexagon $\hexagon$, intersection numbers at the 3-switch are given by
\begin{gather*}
\gamma_{\hexagon(k,m)} \cdot \gamma_{\hexagon(k)} 
= \gamma_{\hexagon(k,m)} \cdot \gamma_{\hexagon(k,l)}
= \gamma_{\hexagon(k)} \cdot \gamma_{\hexagon(k,l)} = 1, \\
\gamma_{\hexagon(k,l)} \cdot \gamma_{\hexagon(k)}
= \gamma_{\hexagon(k,l)} \cdot \gamma_{\hexagon(k,m)}
= \gamma_{\hexagon(k)} \cdot \gamma_{\hexagon(k,m)}= -1,
\end{gather*}
where $k,l,m$ are in cyclic alphabetical order. All other intersection numbers are $0$.

Thus by \refdef{epsilon}, the intersection numbers can be written as
  \[
  \pushQED{\qed}
\gamma_{\hexagon(k)} \cdot \gamma_{\hexagon(k,l)} = \epsilon_{k,l}, \quad
\gamma_{\hexagon(k,l)} \cdot \gamma_{\hexagon(k,m)} = -\epsilon_{l,m} \qedhere
\popQED
\]
\end{lemma}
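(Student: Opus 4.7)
The plan is to reduce the lemma to a direct application of Definition~\ref{Def:LocalIntNumberKSwitch} to the 3-switch inside the short rectangle, once the anticlockwise cyclic order of its four incident branches has been correctly identified.

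First, I would enumerate the branches incident to the 3-switch, using the construction in \refsec{train_tracks_on_H}: the single incoming branch is $\gamma_{\hexagon(k)!}$, oriented into the switch (so $\epsilon = -1$), playing the role of $\gamma_0$ in the definition. On the opposite side are three outgoing branches, each with $\epsilon = +1$: the central branch $\gamma_{\hexagon(k)}$ running straight to the 1-switch on the $k$-labelled short edge of $\hexagon$, and two side branches running into the two long rectangles adjacent to the short rectangle. Since the short rectangle has label $k$, these adjacent long rectangles have types given by the two letters appearing in $k$. Matching against the naming convention for left and right branches established before the lemma (the left branch into type $a$ is $\gamma_{\hexagon(ca,ab)}$, and cyclically for $b$, $c$; the right branch into type $a$ is $\gamma_{\hexagon(ab,ca)}$, and cyclically), a short case-check across $k \in \{ab, bc, ca\}$ identifies the two side branches as $\gamma_{\hexagon(k,l)}$ and $\gamma_{\hexagon(k,m)}$, where $k,l,m$ is a cyclic alphabetical permutation.

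The key geometric step is to fix the anticlockwise cyclic order at the switch, using the orientation on $H$ (equivalently, the view from outside $\p$). Combining the clockwise-from-outside labelling $a, ab, b, bc, c, ca$ around $\hexagon$ with the left/right convention yields the anticlockwise order $\gamma_0 = \gamma_{\hexagon(k)!}$, $\gamma_1 = \gamma_{\hexagon(k,m)}$, $\gamma_2 = \gamma_{\hexagon(k)}$, $\gamma_3 = \gamma_{\hexagon(k,l)}$. Applying Definition~\ref{Def:LocalIntNumberKSwitch}, all $\epsilon_{\gamma_i} = +1$ for $i \geq 1$, so the sign prefactors are trivial; the formula gives $\gamma_i \cdot \gamma_j = +1$ for $1 \leq i < j \leq 3$ and $-1$ for $1 \leq j < i \leq 3$, with all intersections involving $\gamma_0$ vanishing. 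Translating back yields the six stated $\pm 1$ equations, and reformulating via \refdef{epsilon} produces the compact form $\gamma_{\hexagon(k)} \cdot \gamma_{\hexagon(k,l)} = \epsilon_{k,l}$ and $\gamma_{\hexagon(k,l)} \cdot \gamma_{\hexagon(k,m)} = -\epsilon_{l,m}$.

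The main obstacle is purely combinatorial bookkeeping: pinning down which of $\gamma_{\hexagon(k,l)}$ and $\gamma_{\hexagon(k,m)}$ occupies the anticlockwise-first slot $\gamma_1$ versus the anticlockwise-last slot $\gamma_3$. This depends on the orientation convention for $H$ together with the geometric placement of the two adjacent long rectangles on either side of the short rectangle. Once verified in one case (say $k = ab$), the other two follow by the cyclic $\mathbb{Z}/3$ symmetry of the $a,b,c$ labelling. No new ideas beyond those in \reflem{intersection_numbers_triangles} are required; the content is the careful reading of the train-track picture around the 3-switch.
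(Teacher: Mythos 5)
Your proposal is correct and follows exactly the route the paper intends: the paper treats this lemma as immediate from \refdef{intersection_number_branches} (hence the \(\qed\) in the statement), and the anticlockwise order you identify at the 3-switch, namely \(\gamma_{\hexagon(k)!}, \gamma_{\hexagon(k,m)}, \gamma_{\hexagon(k)}, \gamma_{\hexagon(k,l)}\), agrees with the order the paper records later in \refsec{omega_on_oscillating} when setting up equations \refeqn{k_intersection_numbers-1}--\refeqn{k_intersection_numbers-2}. Your write-up simply makes explicit the bookkeeping (all \(\epsilon_{\gamma_i}=+1\) on the outgoing side, vanishing of products with \(\gamma_0\), and the cyclic \(\Z/3\) reduction to the case \(k=ab\)) that the paper leaves to the reader.
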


Finally, we describe the train track on the long rectangles. We have already seen that a branch enters each long rectangle, oriented into it from each adjacent short rectangle. Thus two branches run into the long rectangle, one from each end, oriented into the rectangle. 
The long rectangle forms part of a regular neighbourhood of a hexagonal face $\hexagon$ of a polyhedron $\p$. The incoming edges from adjacent short rectangles are labelled $\gamma_{\hexagon(k,l)}$ and $\gamma_{\hexagon(l,k)}$ where $k$ and $l$ are distinct elements of $\{ab,bc,ca\}$. 
The train track on the long rectangle consists of these two branches running into a station placed on the boundary of the rectangle in such a way that two other branches in an adjacent long rectangle complete the 4-valent station. 
See \reffig{TrainTrack2} and \reffig{hexagons_glued}.

In particular, the face $\hexagon$ is glued to another hexagonal face $\widehat{\hexagon}$ of a polyhedron $\widehat{\p}$. In gluing these faces, an edge of the long rectangle is glued to an edge of another long rectangle in $\widehat{\p}$. We place the station on this glued edge so that the vertical direction coincides with the common edge (i.e.\ parallel to the original edge of the ideal tetrahedron $\tet$). The horizontal direction is perpendicular to the vertical direction. Then the two sides of the station correspond to the two rectangles, or the two polyhedra, on either side of the edge. The two ends of the station correspond to the two ends of the edge. All branches are oriented into the station.

The adjacent long rectangle in $\widehat{\p}$ is adjacent to the hexagonal face $\widehat{\hexagon}$, and contains two branches, labelled $\gamma_{\widehat{\hexagon}(\widehat{k},\widehat{l})}$ and $\gamma_{\widehat{\hexagon}(\widehat{l},\widehat{k})}$, with $\widehat{k}, \widehat{l}$ again distinct elements of $\{ab,bc,ca\}$, oriented into the station.
Reorder $\widehat{k}, \widehat{l}$ if necessary so that $\gamma_{\hexagon(k,l)}$ and $\gamma_{\widehat{\hexagon}(\widehat{k},\widehat{l})}$ lie at the same end but on opposite sides of the station. Then $\gamma_{\hexagon(l,k)}$ and $\gamma_{\widehat{\hexagon}(\widehat{l},\widehat{k})}$ lie at the other end of the station, on opposite sides. Further, $\gamma_{\hexagon(k,l)}$ and $\gamma_{\hexagon(l,k)}$ lie at opposite ends but on the same side of the station; similarly for $\gamma_{\widehat{\hexagon}(\widehat{k},\widehat{l})}$ and $\gamma_{\widehat{\hexagon}(\widehat{l},\widehat{k})}$. Thus in the notation of equation \refeqn{station_intersection}, overlines correspond to reversing the order of $k$ and $l$; and hats correspond to hats on hexagons, i.e.\ switching from one hexagonal face to the face paired with it under gluing.

\begin{figure}
  \def\svgscale{0.8}
\begingroup%
  \makeatletter%
  \providecommand\color[2][]{%
    \errmessage{(Inkscape) Color is used for the text in Inkscape, but the package 'color.sty' is not loaded}%
    \renewcommand\color[2][]{}%
  }%
  \providecommand\transparent[1]{%
    \errmessage{(Inkscape) Transparency is used (non-zero) for the text in Inkscape, but the package 'transparent.sty' is not loaded}%
    \renewcommand\transparent[1]{}%
  }%
  \providecommand\rotatebox[2]{#2}%
  \newcommand*\fsize{\dimexpr\f@size pt\relax}%
  \newcommand*\lineheight[1]{\fontsize{\fsize}{#1\fsize}\selectfont}%
  \ifx\svgwidth\undefined%
    \setlength{\unitlength}{428.11465204bp}%
    \ifx\svgscale\undefined%
      \relax%
    \else%
      \setlength{\unitlength}{\unitlength * \real{\svgscale}}%
    \fi%
  \else%
    \setlength{\unitlength}{\svgwidth}%
  \fi%
  \global\let\svgwidth\undefined%
  \global\let\svgscale\undefined%
  \makeatother%
  \begin{picture}(1,0.78151679)%
    \lineheight{1}%
    \setlength\tabcolsep{0pt}%
    \put(0,0){\includegraphics[width=\unitlength,page=1]{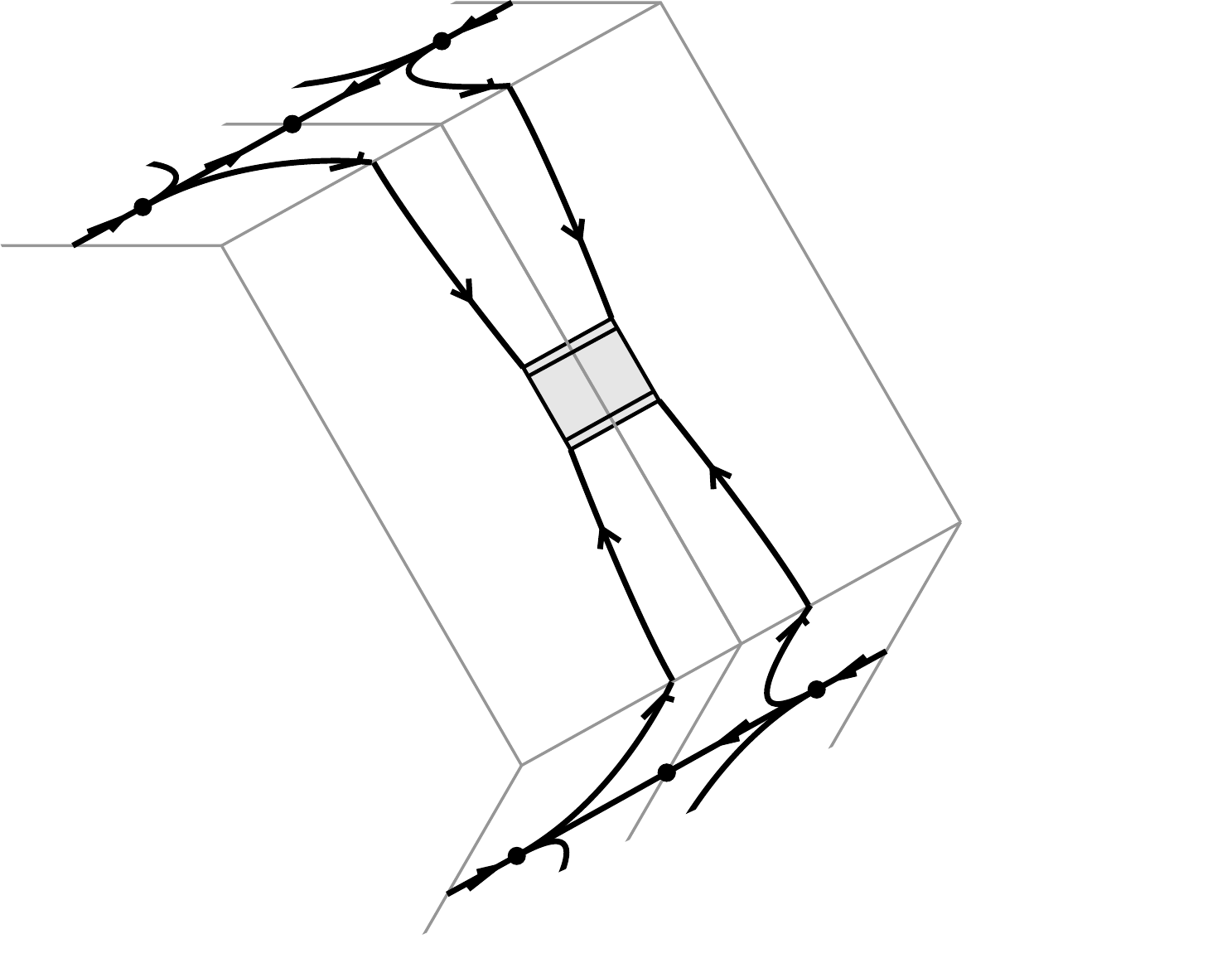}}%
    \put(0.46238317,0.63319724){\color[rgb]{0,0,0}\makebox(0,0)[lt]{\lineheight{1.25}\smash{\begin{tabular}[t]{l}$\gamma_{\hexagon(k,l)}$\end{tabular}}}}%
    \put(0.59095987,0.40297628){\color[rgb]{0,0,0}\makebox(0,0)[lt]{\lineheight{1.25}\smash{\begin{tabular}[t]{l}$\gamma_{\hexagon(l,k)}$\end{tabular}}}}%
    \put(0.27385936,0.55431351){\color[rgb]{0,0,0}\makebox(0,0)[lt]{\lineheight{1.25}\smash{\begin{tabular}[t]{l}$\gamma_{\widehat{\hexagon}(\widehat{k},\widehat{l})}$\end{tabular}}}}%
    \put(0.41707575,0.31007477){\color[rgb]{0,0,0}\makebox(0,0)[lt]{\lineheight{1.25}\smash{\begin{tabular}[t]{l}$\gamma_{\widehat{\hexagon}(\widehat{l},\widehat{k})}$\end{tabular}}}}%
  \end{picture}%
\endgroup%
 \\
  \caption{Train track branches in adjacent long rectangles.}
  \label{Fig:glued_long_rectangles}
\end{figure}

When polyhedra are glued, their hexagonal faces are glued in an orientation-reversing manner. Hence $k,l$ and $\widehat{k}, \widehat{l}$ have opposite cyclic-alphabetical orders. By switching labels if necessary, we may assume that $k,l$ are in cyclic alphabetical order and $(\widehat{k},\widehat{l})$ are in cyclic anti-alphabetical order. See \reffig{glued_long_rectangles}.

Then, in clockwise order around the station (as always, viewed from outside the glued polyhedra) we have $\gamma_{\hexagon(k,l)}$, $\gamma_{\hexagon(l,k)}$, $\gamma_{\widehat{\hexagon}(\widehat{l},\widehat{k})}$, $\gamma_{\widehat{\hexagon}(\widehat{k},\widehat{l})}$. 
Thus in the notation of \refsec{train_tracks_stations}
we may take 
$\gamma = \gamma_{\hexagon(l,k)}$ and $\delta = \overline{\gamma} = \gamma_{\hexagon(k,l)}$,
and intersection numbers are as follows.
\begin{lemma}
\label{Lem:intersection_numbers_long_rectangles}
At a station adjacent to hexagonal faces $\hexagon, \widehat{\hexagon}$ with incoming branches labelled $k,l$ in cyclic alphabetical order and $\widehat{k},\widehat{l}$ in cyclic anti-alphabetical order, 
\begin{gather*}
\gamma_{\hexagon(k,l)} \cdot \gamma_{\hexagon(l,k)} = 1, \quad
\gamma_{\hexagon(l,k)} \cdot \gamma_{\hexagon(k,l)} = -1, \\
\gamma_{\widehat{\hexagon}(\widehat{l},\widehat{k})} \cdot \gamma_{\widehat{\hexagon}(\widehat{k},\widehat{l})} = 1, \quad
\gamma_{\widehat{\hexagon}(\widehat{k},\widehat{l})} \cdot \gamma_{\widehat{\hexagon}(\widehat{l},\widehat{k})} = -1
\end{gather*}
and all other intersection numbers are $0$.
Using \refdef{epsilon}, this can be expressed as
  \[
  \pushQED{\qed}
  \gamma_{\hexagon(k,l)} \cdot \gamma_{\hexagon(l,k)} = \epsilon_{kl}. \qedhere
  \popQED
\]
\end{lemma}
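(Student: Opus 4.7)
The plan is essentially a bookkeeping exercise: identify the four incident branches at the station with the standard pattern $\gamma, \delta, \widehat{\delta}, \widehat{\gamma}$ used in \refdef{intersection_number_branches}, verify the orientation signs, and then read off the intersection numbers. Everything structural has already been set up in the paragraphs immediately preceding the lemma statement, so no further geometric input is required.

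First, I would recall the cyclic ordering established just before the lemma. The construction places $\gamma_{\hexagon(k,l)}, \gamma_{\hexagon(l,k)}, \gamma_{\widehat{\hexagon}(\widehat{l},\widehat{k})}, \gamma_{\widehat{\hexagon}(\widehat{k},\widehat{l})}$ in clockwise order around the station (viewed from outside), so in anticlockwise order they appear cyclically as $\gamma_{\hexagon(k,l)}, \gamma_{\widehat{\hexagon}(\widehat{k},\widehat{l})}, \gamma_{\widehat{\hexagon}(\widehat{l},\widehat{k})}, \gamma_{\hexagon(l,k)}$. Matching this to the pattern $\gamma, \delta, \widehat{\delta}, \widehat{\gamma}$ of \refdef{intersection_number_branches}, we take
\[
\gamma = \gamma_{\hexagon(l,k)}, \quad \delta = \overline{\gamma} = \gamma_{\hexagon(k,l)}, \quad \widehat{\gamma} = \gamma_{\widehat{\hexagon}(\widehat{l},\widehat{k})}, \quad \widehat{\delta} = \widehat{\overline{\gamma}} = \gamma_{\widehat{\hexagon}(\widehat{k},\widehat{l})},
\]
which is exactly the identification the text suggests (both branches on the $\hexagon$-side form one side of the station; overlines correspond to swapping the roles of $k$ and $l$; hats correspond to crossing the glued face).

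Next I would observe the orientation data. By construction all four branches run into the station, so in each case $\epsilon = -1$; hence every product $\epsilon_\alpha \epsilon_\beta$ appearing in \refdef{intersection_number_branches} equals $+1$. Substituting the identifications into the four formulas of that definition then yields immediately
\begin{gather*}
\gamma_{\hexagon(l,k)} \cdot \gamma_{\hexagon(k,l)} = -1, \quad \gamma_{\hexagon(k,l)} \cdot \gamma_{\hexagon(l,k)} = 1, \\
\gamma_{\widehat{\hexagon}(\widehat{k},\widehat{l})} \cdot \gamma_{\widehat{\hexagon}(\widehat{l},\widehat{k})} = -1, \quad \gamma_{\widehat{\hexagon}(\widehat{l},\widehat{k})} \cdot \gamma_{\widehat{\hexagon}(\widehat{k},\widehat{l})} = 1,
\end{gather*}
with all other local intersection numbers among these four branches equal to zero. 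This is precisely the lemma statement.

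For the compact $\epsilon_{kl}$ form, I would simply note that $k,l$ were chosen in cyclic alphabetical order, so $\epsilon_{kl} = 1$, and the identity $\gamma_{\hexagon(k,l)} \cdot \gamma_{\hexagon(l,k)} = \epsilon_{kl}$ follows. The only potential pitfall in the proof, and the step I would double-check most carefully, is the cyclic-order identification: one must verify that the orientation-reversing gluing of hexagonal faces really does force $\widehat{k}, \widehat{l}$ to be cyclic anti-alphabetical when $k,l$ are cyclic alphabetical, and that this places $\gamma_{\hexagon(k,l)}$ diagonally opposite $\gamma_{\widehat{\hexagon}(\widehat{l},\widehat{k})}$ rather than $\gamma_{\widehat{\hexagon}(\widehat{k},\widehat{l})}$. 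But this has already been arranged in the paragraph establishing the clockwise ordering (and in \reffig{glued_long_rectangles}), so the remaining argument is purely formal.
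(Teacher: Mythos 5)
Your proposal is correct and follows exactly the paper's route: the paper's ``proof'' is precisely the preceding setup identifying $\gamma = \gamma_{\hexagon(l,k)}$, $\delta = \overline{\gamma} = \gamma_{\hexagon(k,l)}$ (with hats switching to $\widehat{\hexagon}$), noting all branches are oriented into the station so the sign factors are trivial, and reading off Definition~\ref{Def:intersection_number_branches}. Your cyclic-order bookkeeping and the resulting four values match the paper's.
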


\begin{figure}
	\def\svgscale{0.4}
\begingroup%
  \makeatletter%
  \providecommand\color[2][]{%
    \errmessage{(Inkscape) Color is used for the text in Inkscape, but the package 'color.sty' is not loaded}%
    \renewcommand\color[2][]{}%
  }%
  \providecommand\transparent[1]{%
    \errmessage{(Inkscape) Transparency is used (non-zero) for the text in Inkscape, but the package 'transparent.sty' is not loaded}%
    \renewcommand\transparent[1]{}%
  }%
  \providecommand\rotatebox[2]{#2}%
  \newcommand*\fsize{\dimexpr\f@size pt\relax}%
  \newcommand*\lineheight[1]{\fontsize{\fsize}{#1\fsize}\selectfont}%
  \ifx\svgwidth\undefined%
    \setlength{\unitlength}{672.3212513bp}%
    \ifx\svgscale\undefined%
      \relax%
    \else%
      \setlength{\unitlength}{\unitlength * \real{\svgscale}}%
    \fi%
  \else%
    \setlength{\unitlength}{\svgwidth}%
  \fi%
  \global\let\svgwidth\undefined%
  \global\let\svgscale\undefined%
  \makeatother%
  \begin{picture}(1,0.73836478)%
    \lineheight{1}%
    \setlength\tabcolsep{0pt}%
    \put(0,0){\includegraphics[width=\unitlength,page=1]{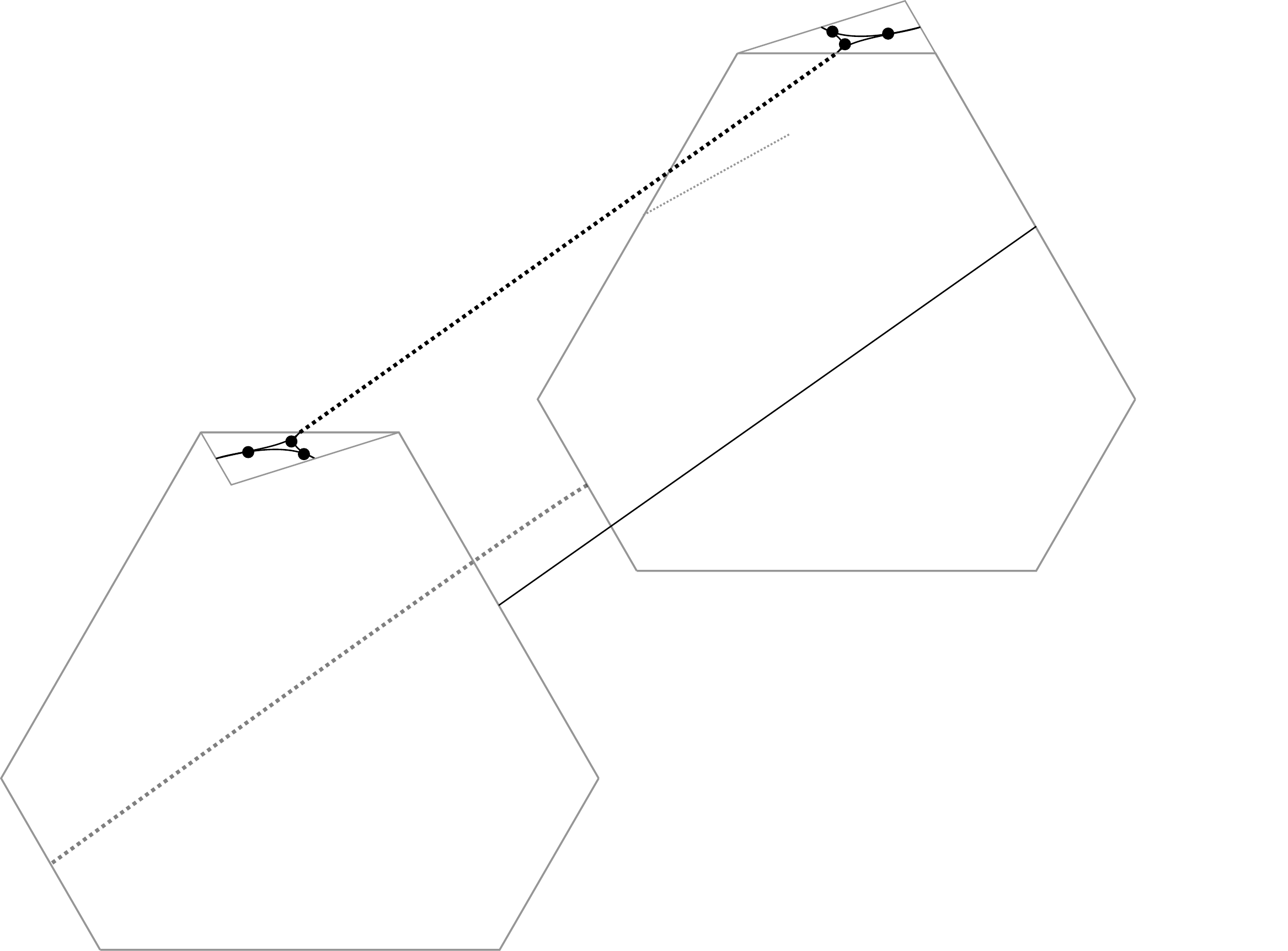}}%
    \put(0.20483862,0.28678387){\color[rgb]{0,0,0}\makebox(0,0)[lt]{\lineheight{1.25}\smash{\begin{tabular}[t]{l}$\gamma_{\widehat{\hexagon}(\widehat{k},\widehat{l})}$\end{tabular}}}}%
    \put(0,0){\includegraphics[width=\unitlength,page=2]{road_rules_motivation_2.pdf}}%
    \put(0.70694368,0.64415941){\color[rgb]{0,0,0}\makebox(0,0)[lt]{\lineheight{1.25}\smash{\begin{tabular}[t]{l}$\gamma_{\hexagon(k,l)}$\end{tabular}}}}%
    \put(0.83094962,0.46244774){\color[rgb]{0,0,0}\makebox(0,0)[lt]{\lineheight{1.25}\smash{\begin{tabular}[t]{l}$\gamma_{\hexagon(l,k)}$\end{tabular}}}}%
    \put(0.28051211,0.1297189){\color[rgb]{0,0,0}\makebox(0,0)[lt]{\lineheight{1.25}\smash{\begin{tabular}[t]{l}$\gamma_{\widehat{\hexagon}(\widehat{l},\widehat{k})}$\end{tabular}}}}%
    \put(0,0){\includegraphics[width=\unitlength,page=3]{road_rules_motivation_2.pdf}}%
  \end{picture}%
\endgroup%
 \\
  \caption{Topological motivation for the intersection form near stations.}
  \label{Fig:road_rules_motivation}
\end{figure}

\begin{remark}
We can now give some topological motivation for the unconventional intersection numbers at stations; see \reffig{road_rules_motivation}. Suppose we work with truncated tetrahedra $\overline{\tet}$ rather than polygons $\p$. Then we could draw curves similar to our train tracks, but rather more degenerate and singular, with short and long rectangles collapsing to sides of triangles, and curves $\gamma_{\hexagon(k,l)}, \gamma_{\hexagon(l,k)}$ actually drawn on the hexagon $\hexagon$, as in the top half of \reffig{road_rules_motivation}.  Similarly, $\gamma_{\widehat{\hexagon}(\widehat{l},\widehat{k})}$ and $\gamma_{\widehat{\hexagon}(\widehat{k},\widehat{l})}$ could be drawn on $\widehat{\hexagon}$. Then these curves naturally intersect  at the midpoints of long edges with intersection numbers (viewed on the 
boundary of $\overline{\tet}$) as we have defined them. When the hexagons $\hexagon, \widehat{\hexagon}$ are glued, the midpoints of long edges join (illustrated by a double line in \reffig{road_rules_motivation}) to produce a station. Such a description was originally used by the authors to state our results. Train tracks give us a more conventional, more easily described, less singular description of the curves involved, at the cost of introducing various intricacies along short and long rectangles, and the intersection form appearing unconventional.
\end{remark}

We have now completely described an oriented enhanced train track on each polyhedron $\p$. These fit together under gluing. When the polyhedra are glued into a Heegaard handlebody, the train tracks remain on the boundary of the handlebody, hence provide an oriented enhanced train track on the Heegaard surface $H$.

\subsection{Oscillating curves on the Heegaard surface}

Combining the construction of the oriented enhanced train track $\tau$ on $H$ above, with \refdef{abstract_oscillating_curve} of an abstract oscillating curve $\zeta$, we immediately obtain the following description of abstract oscillating curves on $\tau$.

First we simplify some notation. Recall that we write $\zeta = \sum_\gamma n_\gamma \gamma$, over the branches $\gamma$ of $\tau$. Each branch has a name such as $\gamma_{\Delta(a)}, \gamma_{\Delta(bc)}, \gamma_{\hexagon(ab)!}, \gamma_{\hexagon(bc)}, \gamma_{\hexagon(ab,ca)}, \gamma_{\hexagon(ca,bc)}$, etc. In each case, rather than writing $n_{\gamma_\bullet}$ we simply write $n_\bullet$. Thus, for instance, we write
\begin{gather}
\label{Eqn:n_simplify-1}
n_{\Delta(a)} = n_{\gamma_{\Delta(a)}}, \quad
n_{\Delta(bc)} = n_{\gamma_{\Delta(bc)}}, \quad
n_{\hexagon(ab)!} = n_{\gamma_{\hexagon(ab)!}}, \\
\label{Eqn:n_simplify-2}
n_{\hexagon(bc)} = n_{\gamma_{\hexagon(bc)}}, \quad
n_{\hexagon(ab,ca)} = n_{\gamma_{\hexagon(ab,ca)}}, \quad
\text{etc.}
\end{gather}

\begin{lemma}
\label{Lem:oscillating_conditions}
An abstract oscillating curve on $\tau$ is given by $\zeta = \sum_\gamma n_\gamma \gamma$ precisely when the $n_\gamma$ satisfy the following equations:
\begin{enumerate}
\item
For each triangle $\Delta$ of each polyhedron $\p$,
\[
n_{\Delta(ab)} = n_{\Delta(a)} - n_{\Delta(b)}, \quad
n_{\Delta(bc)} = n_{\Delta(b)} - n_{\Delta(c)}, \quad
n_{\Delta(ca)} = n_{\Delta(c)} - n_{\Delta(a)}.
\]
\item
For each short rectangle adjacent to a triangle $\Delta$ and hexagon $\hexagon$ in a polyhedron $\p$, with label $k$ and with $\{k,l,m\} = \{ab,bc,ca\}$,
\[
n_{\Delta(k)} = n_{\hexagon(k)} + n_{\hexagon(k,l)} + n_{\hexagon(k,m)}.
\]
\item
For each pair of glued short rectangles, adjacent to glued hexagonal faces $\hexagon, \widehat{\hexagon}$, and with labels $k, \widehat{k} \in \{ab,bc,ca\}$ respectively,
\[
n_{\hexagon(k)} + n_{\widehat{\hexagon}(\widehat{k})} = 0.
\]
\item
For each pair of glued long rectangles, adjacent to glued hexagonal faces $\hexagon, \widehat{\hexagon}$, respectively with labels $(k,l)$ and $(\widehat{k},\widehat{l})$ in clockwise and anticlockwise order respectively,
\[
n_{\hexagon(k,l)} + n_{\widehat{\hexagon}(\widehat{k},\widehat{l})} = n_{\hexagon(l,k)} + n_{\widehat{\hexagon}(\widehat{l},\widehat{k})}.
\]
\end{enumerate}
\end{lemma}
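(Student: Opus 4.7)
The plan is to unpack \refdef{abstract_oscillating_curve} vertex-by-vertex on the enhanced train track $\tau$ constructed in \refsec{train_tracks_on_H}. Since the vertices of $\tau$ come in exactly four topological types---the three $2$-switches inside each triangle $\Delta$, the $3$-switches inside each short rectangle, the $1$-switches at midpoints of short edges of hexagonal faces, and the stations on glued long edges---the four conditions (i)--(iv) of the lemma will correspond bijectively to the compatibility condition from \refdef{abstract_oscillating_curve} imposed at each type of vertex. The whole proof is thus a routine check of signs, the only substantive work being to read off orientations correctly from \reffig{TrainTrack1}, \reffig{TrainTrack2}, \reffig{hexagons_glued}, and \reffig{glued_long_rectangles}.

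First I would handle the $2$-switches inside a triangle $\Delta$. Each such switch sits at the midpoint of an edge of $\Delta$, and by the construction the branches $\gamma_{\Delta(a)}, \gamma_{\Delta(b)}, \gamma_{\Delta(c)}$ run anticlockwise around the three vertices. At the switch near the edge labelled $ab$, the branch $\gamma_{\Delta(a)}$ terminates (so has $\epsilon = -1$), $\gamma_{\Delta(b)}$ emerges (so has $\epsilon = +1$), and $\gamma_{\Delta(ab)}$ emerges across the edge into the short rectangle (so has $\epsilon = +1$). Condition (i) of \refdef{abstract_oscillating_curve} then reads $-n_{\Delta(a)} + n_{\Delta(b)} + n_{\Delta(ab)} = 0$, giving the first of the three equations in (i). The other two equations in (i) follow identically by cyclic relabelling.

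Next, at the $3$-switch in the short rectangle labelled $k$, the branch $\gamma_{\hexagon(k)!} = \gamma_{\Delta(k)}$ enters from the triangle side (so $\epsilon = -1$) and the three branches $\gamma_{\hexagon(k)}, \gamma_{\hexagon(k,l)}, \gamma_{\hexagon(k,m)}$ all leave on the other side (so all have $\epsilon = +1$). Condition (i) of \refdef{abstract_oscillating_curve} then yields (ii) of the lemma. The $1$-switch at the midpoint of a short edge of $\hexagon$ has only two incident branches after gluing, namely $\gamma_{\hexagon(k)}$ from $\p$ and $\gamma_{\widehat{\hexagon}(\widehat{k})}$ from the glued polyhedron $\widehat{\p}$, both oriented into the switch, so the condition reduces to $n_{\hexagon(k)} + n_{\widehat{\hexagon}(\widehat{k})} = 0$, giving (iii).

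Finally, at a station on a pair of glued long rectangles, the four incident branches are $\gamma_{\hexagon(k,l)}, \gamma_{\hexagon(l,k)}, \gamma_{\widehat{\hexagon}(\widehat{k},\widehat{l})}, \gamma_{\widehat{\hexagon}(\widehat{l},\widehat{k})}$, all oriented into the station. The station compatibility condition of \refdef{abstract_oscillating_curve}(2) says $\epsilon_\gamma n_\gamma + \epsilon_{\widehat\gamma} n_{\widehat\gamma} = \epsilon_\delta n_\delta + \epsilon_{\widehat\delta} n_{\widehat\delta}$, i.e.\ the sum over one pair of same-side-opposite-end branches equals the sum over the other. The main delicate point---and the only step with any real risk of a sign error---is to verify, using the description in \refsec{train_tracks_on_H} of how the four branches are arranged around the station (overlines swapping ends, hats swapping sides, and the orientation-reversing gluing of hexagonal faces forcing $k,l$ and $\widehat k, \widehat l$ to have opposite cyclic orders), that $\gamma$ pairs with $\widehat\gamma$ on one side of the equation and $\delta$ with $\widehat\delta$ on the other. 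Once this is done, all four $\epsilon$'s equal $-1$ and the resulting equation is precisely (iv). Collecting the four cases establishes the lemma.
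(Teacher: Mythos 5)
Your proposal is correct and follows exactly the same route as the paper's (much terser) proof: matching the four vertex types of $\tau$ --- the $2$-switches in triangles, the $3$-switches and $1$-switches in short rectangles, and the stations on glued long rectangles --- to conditions (i)--(iv) and unpacking \refdef{abstract_oscillating_curve} at each. Your extra sign bookkeeping (in particular the orientation assignments at the triangle $2$-switches and the end/side pairing at stations) is consistent with the stated equations and with the construction in \refsec{train_tracks_on_H}, so there is nothing to add.
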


We note that (i) above implies that for each triangle $\Delta$,
\begin{equation}
\label{Eqn:triangle_sum_zero}
n_{\Delta(ab)} + n_{\Delta(bc)} + n_{\Delta(ca)} = 0.
\end{equation}
Also observe that (ii) may be rewritten as
\begin{equation}
\label{Eqn:avast_exclamation}
n_{\hexagon(k)!} = n_{\hexagon(k)} + n_{\hexagon(k,l)} + n_{\hexagon(k,m)}
\end{equation}

\begin{proof}
The equations in (i) are the compatibility conditions from \refdef{abstract_oscillating_curve} at the three 2-switches inside each triangle $\Delta$. The equations in (ii) are the compatibility conditions at the 3-switch inside each short rectangle. One such condition is $n_{\Delta(ab)} = n_{\hexagon(ab)} + n_{\hexagon(ab,bc)} + n_{\hexagon(ab,ca)}$ and the others are obtained by cyclically permuting $ab,bc,ca$, hence the conditions can be expressed as claimed. The equations in (iii) are the compatibility conditions at each 1-switch where pairs of short rectangles meet. And the equations in (iv) are the compatibility conditions at stations.
\end{proof}

\section{Symplectic vector space and combinatorial holonomy}
\label{Sec:CombinatorialHolonomy}

Let the polyhedra of the handlebody decomposition be $\p_1, \ldots, \p_n$; they are truncations of the ideal tetrahedra $\tet_1, \ldots, \tet_n$ of $\calT$. We have given labels using the letters $a,b,c$ to many aspects of these polyhedra.

We now define a symplectic vector space which again goes back at least to Neumann and Zagier \cite{NeumannZagier}, based on these $abc$s.

\begin{definition}
  Let $\V$ be a $2n$-dimensional real vector space generated by $3n$ elements
  \[ \a_1, \b_1, \cc_1, \ldots, \a_n, \b_n, \cc_n,\]
  subject to the $n$ relations
\[
\a_i + \b_i + \cc_i = \0 \quad \text{for $i = 1, \ldots, n$.}
\]
\end{definition}

There is a natural symplectic form $\omega$ on $\V$, given on generators by
\[
\omega ( \a_i, \b_j ) = 
\omega ( \b_i, \cc_j ) = 
\omega ( \cc_i, \a_j ) =
\delta_{i,j},
\]
\[
\omega ( \b_i, \a_j ) =
\omega ( \cc_i, \b_j ) =
\omega ( \a_i, \cc_j ) = - \delta_{i,j}.
\]
The generator $\a_i$ is associated to objects in $\tet_i$ or $\p_i$ of type $a$. Similarly, $\b_i$ and $\cc_i$ are associated to $b$- and $c$-type objects.

Given an abstract oscillating curve $\zeta$, we now define its combinatorial holonomy as an element of $\V$.

\begin{definition}\label{Def:Holonomy}
Let $\gamma$ be a branch of the enhanced train track $\tau$ on $H$. Its \emph{combinatorial holonomy} is
\[
h(\gamma) = \left\{ \begin{array}{ll} 
\a_i & \text{$\gamma = \gamma_{\Delta(a)}$ where $\Delta$ is a triangular face of $\p_i$} \\ 
\b_i & \text{$\gamma = \gamma_{\Delta(b)}$ where $\Delta$ is a triangular face of $\p_i$} \\
\cc_i & \text{$\gamma = \gamma_{\Delta(c)}$ where $\Delta$ is a triangular face of $\p_i$} \\
0 & \text{otherwise.} \end{array} \right.
\]
The combinatorial holonomy $h(\zeta)$ of an abstract oscillating curve $\zeta = \sum_\gamma n_\gamma \gamma$ is then defined by extending linearly:
\[
h(\zeta) = \sum_\gamma n_\gamma h(\gamma).
\]
\end{definition}
As each oscillating curve defines an abstract oscillating curve, it too has a combinatorial holonomy.

Note that the combinatorial holonomy effectively ignores any part of an abstract oscillating curve $\zeta$ which runs along short or long rectangles of polyhedra $\p_i$, including all those parts of $\zeta$ which proceed through the interior of $\overline{M}$. Moreover, in each polyhedron $\p_i$ there are four triangular faces, and each triangular face has a branch of type $\gamma_{\Delta(a)}$, $\gamma_{\Delta(b)}$, and $\gamma_{\Delta(c)}$. The distinction between the four branches of $\tau$ in $\p_i$ of type $\gamma_{\Delta(a)}$ is lost in the combinatorial holonomy, as is the distinction between the four branches of type $\gamma_{\Delta(b)}$ and $\gamma_{\Delta(c)}$. It is not at all immediately clear, then, that the combinatorial holonomy can be used to extract any meaningful geometric or topological information about an oscillating curve.

The following is the main theorem of the paper, and it asserts that the combinatorial holonomy is sufficient to extract intersection numbers, as defined in \refdef{intersection_number}. Indeed, the symplectic form $\omega$ is the intersection form.

\begin{thm}\label{Thm:MainThm}
For any abstract oscillating curves  $\zeta, \zeta'$ in $M$,
\[
\omega ( h(\zeta), h(\zeta')) = 2 \zeta \cdot \zeta'.
\]
\end{thm}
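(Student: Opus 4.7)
The plan is to prove the identity polyhedron-by-polyhedron. Both sides are bilinear in $\zeta, \zeta'$. For the right-hand side, $\omega$ pairs only generators with matching subscripts, so
\[
\omega(h(\zeta), h(\zeta')) \;=\; \sum_{i=1}^n \omega\bigl(h(\zeta)|_{\p_i},\, h(\zeta')|_{\p_i}\bigr),
\]
where $h(\zeta)|_{\p_i} = N^a_i\a_i + N^b_i\b_i + N^c_i\cc_i$ with $N^k_i := \sum_{\Delta\subset\p_i} n_{\Delta(k)}$. For the left-hand side, \refdef{intersection_number} expresses $2\zeta\cdot\zeta'$ as a sum of local intersection contributions at every $k$-switch and station of $\tau$. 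Each $k$-switch lies in exactly one polyhedron; and by \reflem{intersection_numbers_long_rectangles}, the nonzero intersections at each station only pair branches on the same hexagon, so the station's contribution splits cleanly into two halves, one attributable to each adjacent polyhedron. Thus $2\zeta\cdot\zeta' = \sum_i I_{\p_i}(\zeta,\zeta')$, and it suffices to check $I_{\p_i}(\zeta,\zeta') = \omega(h(\zeta)|_{\p_i}, h(\zeta')|_{\p_i})$ for each $i$.

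Fixing $\p_i$, Lemmas~\ref{Lem:intersection_numbers_triangles}, \ref{Lem:intersection_numbers_short_rectangles}, and \ref{Lem:intersection_numbers_long_rectangles} let me expand $I_{\p_i}$ as a sum of ``triangle'' terms $T_\Delta = \sum_{k,l\in\{a,b,c\}}\epsilon_{k,l}\, n_{\Delta(k)} n'_{\Delta(l)}$ from each of the four triangles of $\p_i$, 3-switch terms from each of the twelve short rectangles, and half-station terms $\epsilon_{k,l}\bigl(n_{\hexagon(k,l)} n'_{\hexagon(l,k)} - n_{\hexagon(l,k)} n'_{\hexagon(k,l)}\bigr)$ from each of the twelve half-stations. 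On the other side,
\[
\omega\bigl(h(\zeta)|_{\p_i}, h(\zeta')|_{\p_i}\bigr) \;=\; \sum_{\Delta\subset\p_i} T_\Delta \;+\; \sum_{\Delta\neq\Delta'\subset\p_i} \sum_{k,l}\epsilon_{k,l}\, n_{\Delta(k)} n'_{\Delta'(l)}.
\]
The $T_\Delta$ contributions match on both sides, reducing the theorem to the claim that the short-rectangle and half-station contributions assemble into exactly the cross-triangle sum on the right.

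To close this gap, apply the compatibility conditions \reflem{oscillating_conditions}(i) and (ii): together they rewrite each $n_{\Delta(ab)} = n_{\Delta(a)} - n_{\Delta(b)}$ (and cyclic variants) as the sum $n_{\hexagon(ab)} + n_{\hexagon(ab,bc)} + n_{\hexagon(ab,ca)}$ of hexagon-labelled coefficients, bridging the hexagon coefficients appearing in the short-rectangle/half-station contributions and the vertex coefficients appearing in the cross-triangle terms. The natural organization is hexagon-by-hexagon: each hexagon $\hexagon\subset\p_i$ is adjacent to three of the four triangles of $\p_i$ and associated to three short rectangles and three half-stations; a local substitution should produce the cross-terms among those three triangles, and summing over the four hexagons of $\p_i$ then yields the full cross-triangle sum with correct multiplicity (each distinct pair of triangles in $\p_i$ shares exactly two hexagons, so the combinatorial counts have to match out). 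Notably, the gluing conditions \reflem{oscillating_conditions}(iii)--(iv) are never invoked, which is consistent with the polyhedron-by-polyhedron strategy.

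I expect the main obstacle to be the bookkeeping in this last hexagon-level computation: three short rectangles and three half-stations per hexagon produce dozens of bilinear terms decorated with $\epsilon$-signs tied to cyclic-alphabetical ordering, and these must be matched against the six cross-triangle terms for each adjacent pair of triangles. The verification is finite, local, and ultimately mechanical --- essentially an identity in a fixed polynomial ring on the six hexagon coefficients and the vertex coefficients of three adjacent triangles --- but writing it down cleanly without sign errors requires careful setup of notational conventions.
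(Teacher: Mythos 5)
Your reduction to a polyhedron-by-polyhedron identity is where the argument breaks: the claim $I_{\p_i}(\zeta,\zeta') = \omega\bigl(h(\zeta)|_{\p_i}, h(\zeta')|_{\p_i}\bigr)$ is false in general. The right-hand side depends only on the triangle coefficients $n_{\Delta(a)}, n_{\Delta(b)}, n_{\Delta(c)}$, hence (via \reflem{oscillating_conditions}(ii)) only on the sums $n_{\hexagon(k)} + n_{\hexagon(k,l)} + n_{\hexagon(k,m)}$; but the local contributions at 3-switches and half-stations depend on the individual summands. Concretely, fix a short rectangle labelled $k$ adjacent to $\hexagon$ in $\p$, glued to the rectangle labelled $\widehat{k}$ adjacent to $\widehat{\hexagon}$ in $\widehat{\p} \neq \p$, and perturb $\zeta$ by $n_{\hexagon(k)} \mapsto n_{\hexagon(k)} - 1$, $n_{\hexagon(k,l)}\mapsto n_{\hexagon(k,l)}+1$, $n_{\widehat{\hexagon}(\widehat{k})}\mapsto n_{\widehat{\hexagon}(\widehat{k})}+1$, $n_{\widehat{\hexagon}(\widehat{k},\widehat{l})}\mapsto n_{\widehat{\hexagon}(\widehat{k},\widehat{l})}-1$. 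This preserves every compatibility condition of \reflem{oscillating_conditions} and leaves $h(\zeta)$ unchanged, so the right-hand side of your per-polyhedron identity is unchanged for every $\p_i$; yet by \reflem{intersection_numbers_short_rectangles} and \reflem{intersection_numbers_long_rectangles}, $I_{\p}$ changes by $-n'_{\hexagon(k)} - n'_{\hexagon(k,l)} + n'_{\hexagon(l,k)}$, which is nonzero for many $\zeta'$ (e.g.\ a boundary curve running through $\gamma_{\hexagon(k)}$). The change is compensated in $I_{\widehat{\p}}$, not within $\p$.

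This is exactly why the gluing conditions \reflem{oscillating_conditions}(iii)--(iv) cannot be dispensed with: they are the only relations tying the individual branch coefficients in a short or long rectangle to those of the glued rectangle, and the cancellations that make the theorem true occur across glued pairs of hexagonal faces rather than within a single polyhedron. The paper accordingly organizes both sides as a sum over triangles plus a sum over \emph{glued pairs} $(\hexagon,\widehat{\hexagon})$ (\refprop{IntOverFaces} and \refprop{OmegaOverFaces}), and the matching of the two sides repeatedly invokes $n_{\hexagon(k)} + n_{\widehat{\hexagon}(\widehat{k})} = 0$ and the station condition. A related symptom in your plan: expanding the cross-triangle terms of $\omega$ via $n_{\Delta(ab)} = n_{\hexagon(ab)} + n_{\hexagon(ab,bc)} + n_{\hexagon(ab,ca)}$ produces products of coefficients from \emph{different} short rectangles of the same hexagon (such as $n_{\hexagon(k)} n'_{\hexagon(l)}$), and no local intersection number within $\p_i$ ever produces such terms; they are only eliminated after summing over the glued pair and applying (iii)--(iv). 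So the ``finite, local, mechanical'' verification you anticipate is an identity that is simply not true in the polynomial ring you describe, and the triangle-contribution matching is the only part of your outline that survives.
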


The next section will be devoted to the proof of \refthm{MainThm}.

\section{Evaluating $\omega$ on oscillating curves}
\label{Sec:omega_on_oscillating}

To prove \refthm{MainThm}, we express the intersection number as a sum over faces, in \refsec{IntOverFaces}, and do the same for $\omega$ in \refsec{OmegaOverFaces}. Then we show these sums are equal in \refsec{EvaluatingOmega}. 

We write our two abstract oscillating curves as $\zeta = \sum_\gamma n_\gamma \gamma$ and $\zeta' = \sum_\gamma n'_\gamma \gamma$, where the sums are over all the oriented branches $\gamma$ of the enhanced oriented train track $\tau$. Throughout, we write the coefficients $n_\gamma, n'_\gamma$ by simplifying subscripts as in \refeqn{n_simplify-1}--\refeqn{n_simplify-2}; we introduce further simplifications as we proceed.

\subsection{Intersection number as a sum over faces}
\label{Sec:IntOverFaces}

We express $2\zeta \cdot \zeta'$ as a sum over over triangular faces and glued hexagonal faces of the polyhedra $\p_i$; equivalently, over cusp triangles and faces of the triangulation $\TT$.

From \refdef{intersection_number}, $2 \zeta \cdot \zeta'$ is the sum of intersection numbers at switches of $\tau$, hence there are three types:
\begin{enumerate}
\item \emph{Triangular faces}: each triangular face $\Delta$ of a polyhedron $\p$ contains three 2-switches.
\item \emph{Short rectangles}: each short rectangle of a polyhedron $\p$ contains a 3-switch and a 1-switch. By \refdef{intersection_number_branches} the 1-switch does not contribute to the intersection number. Each polyhedron $\p$ contains twelve such rectangles: three per triangular face, or alternatively, three per hexagonal face. Alternatively again, there are six such rectangles per pair of glued hexagonal faces.
\item \emph{Long rectangles}: 
Each pair of glued long rectangles contains a station. Since each hexagonal face of $\p$ contains three long edges, there are three such stations per pair of glued faces.
\end{enumerate}
Thus $2\zeta \cdot \zeta'$ is a sum of three terms, arising from sums over the switches of each type; we denote them type (i), (ii), (iii) accordingly.

In case (i) the intersections occur in a specific triangle $\Delta$ of a polyhedron $\p_i$, and we express the sum of such intersections as a sum over triangles.

In cases (ii) and (iii) the intersections occur near a glued pair of hexagonal faces, denoted $\hexagon, \widehat{\hexagon}$. We express the sum of such intersections as a sum over such pairs $\hexagon, \widehat{\hexagon}$.

\subsubsection{Contributions from triangles.}
Here we follow Choi in part (b) of the final proof of \cite{Choi:NZ}. Recall that each triangular face $\Delta$ of a polyhedron $\p$ has vertices labelled $a,b,c$, and edges labelled $ab,bc,ca$ in anticlockwise order. Three branches $\gamma_{\Delta(a)}, \gamma_{\Delta(b)}, \gamma_{\Delta(c)}$ of $\tau$ run anticlockwise around the vertices labelled $a,b,c$. The edges labelled $ab,bc,ca$ respectively meet branches $\gamma_{\Delta(ab)}$, $\gamma_{\Delta(bc)}$, and $\gamma_{\Delta(ca)}$, as in \reffig{TrainTrack1}. Recall $n_\gamma, n_\gamma'$ denote the signed number of arcs of $\zeta, \zeta'$ respectively along the oriented branch $\gamma$. We apply the simplifications of \refeqn{n_simplify-1}--\refeqn{n_simplify-2} to the $n_\gamma'$ as for $n_\gamma$, adding a $'$ to each.

\begin{lem}\label{Lem:ContribTriangles}
The terms of type (i) in $2 \zeta \cdot \zeta'$ are given by
\[
\sum_{\Delta} n_{\Delta(ab)} n'_{\Delta(bc)} - n_{\Delta(bc)} n'_{\Delta(ab)}
\]
where the sum is over all triangular faces $\Delta$ of polyhedra $\p$.
\end{lem}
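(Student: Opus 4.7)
The plan is to compute, face by face, the intersection contributions arising at the three $2$-switches inside each triangle $\Delta$, and then use the switch compatibility conditions from \reflem{oscillating_conditions}(i) to convert the result into the form stated.

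First I would apply bilinearity of the intersection form (\refdef{intersection_number}) together with \reflem{intersection_numbers_triangles}. The latter says that the only nonzero local intersection numbers at the three $2$-switches inside $\Delta$ are among the three around-vertex branches $\gamma_{\Delta(a)}, \gamma_{\Delta(b)}, \gamma_{\Delta(c)}$; the outgoing branches $\gamma_{\Delta(ab)}, \gamma_{\Delta(bc)}, \gamma_{\Delta(ca)}$ contribute nothing at these switches. Collecting the six nonzero terms yields the contribution from $\Delta$ to $2\zeta\cdot\zeta'$ as
\[
X_\Delta = \bigl(n_{\Delta(a)}n'_{\Delta(b)} - n_{\Delta(b)}n'_{\Delta(a)}\bigr) + \bigl(n_{\Delta(b)}n'_{\Delta(c)} - n_{\Delta(c)}n'_{\Delta(b)}\bigr) + \bigl(n_{\Delta(c)}n'_{\Delta(a)} - n_{\Delta(a)}n'_{\Delta(c)}\bigr).
\]

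Next, I would translate this into the variables $n_{\Delta(ab)}, n_{\Delta(bc)}$ using the compatibility relations from \reflem{oscillating_conditions}(i): $n_{\Delta(ab)} = n_{\Delta(a)} - n_{\Delta(b)}$ and $n_{\Delta(bc)} = n_{\Delta(b)} - n_{\Delta(c)}$, with analogous identities for $\zeta'$. A direct expansion of
\[
n_{\Delta(ab)}n'_{\Delta(bc)} - n_{\Delta(bc)}n'_{\Delta(ab)} = (n_{\Delta(a)}-n_{\Delta(b)})(n'_{\Delta(b)}-n'_{\Delta(c)}) - (n_{\Delta(b)}-n_{\Delta(c)})(n'_{\Delta(a)}-n'_{\Delta(b)})
\]
produces eight terms; the two $n_{\Delta(b)}n'_{\Delta(b)}$ terms cancel, and the remaining six match $X_\Delta$ term by term. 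Summing over all triangular faces $\Delta$ of all polyhedra $\p$ then gives the formula in the lemma.

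There is no real obstacle beyond this bookkeeping; the only subtlety worth noting is that the stated right-hand side privileges the specific pair $(ab, bc)$ whereas $X_\Delta$ is manifestly cyclically symmetric in $(a,b,c)$. One may verify using \refeqn{triangle_sum_zero} that the quantity $n_{\Delta(k)}n'_{\Delta(l)} - n_{\Delta(l)}n'_{\Delta(k)}$ takes the same value for each of the three cyclic choices of distinct $(k,l)\in\{ab,bc,ca\}^2$, confirming that the sum is well-defined and independent of the preferred pair.
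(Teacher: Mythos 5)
Your proof is correct and follows essentially the same route as the paper's: expand the contribution at the switches in $\Delta$ via bilinearity and \reflem{intersection_numbers_triangles} to get the six-term cyclic expression, then expand $n_{\Delta(ab)}n'_{\Delta(bc)} - n_{\Delta(bc)}n'_{\Delta(ab)}$ using \reflem{oscillating_conditions}(i) and match. The closing observation about cyclic symmetry of the summand is a correct (and not strictly necessary) bonus.
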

The summand here is Choi's $\kappa(AB^i) \kappa' (BC^i) - \kappa' (AB^i) \kappa (BC^i)$. 

\begin{proof}
In triangle $\Delta$, the intersection numbers between branches are given by \reflem{intersection_numbers_triangles}. Any nonzero intersection number comes from the arcs $\gamma_{\Delta(a)}, \gamma_{\Delta(b)}, \gamma_{\Delta(c)}$. So the contribution to $2\zeta \cdot \zeta'$ from the switches in $\Delta$ is given by
\begin{gather*}
\left( 
n_{\Delta(a)} \gamma_{\Delta(a)} + n_{\Delta(b)} \gamma_{\Delta(b)} + n_{\Delta(c)} \gamma_{\Delta(c)}
\right) \cdot  
\\
\hspace{1in}
\left(
{n'}_{\Delta(a)} \gamma_{\Delta(a)} + {n'}_{\Delta(b)} \gamma_{\Delta(b)} + {n'}_{\Delta(c)} \gamma_{\Delta(c)}
\right).
\end{gather*}
Expanding and using \reflem{intersection_numbers_triangles} yields
\[
n_{\Delta(a)} {n'}_{\Delta(b)}
- n_{\Delta(b)} {n'}_{\Delta(a)}
+ n_{\Delta(b)} {n'}_{\Delta(c)}
- n_{\Delta(c)} {n'}_{\Delta(b)}
+ n_{\Delta(c)} {n'}_{\Delta(a)}
- n_{\Delta(a)} {n'}_{\Delta(c)}.
\]
On the other hand, the claimed summand can be expanded using \reflem{oscillating_conditions}(i)
\begin{gather*}
  n_{\Delta(ab)} n'_{\Delta(bc)} - n_{\Delta(bc)} n'_{\Delta(ab)} = \\
\left( n_{\Delta(a)} - n_{\Delta(b)} \right)
\left( {n'}_{\Delta(b)} - {n'}_{\Delta(c)} \right)
-
\left( n_{\Delta(b)} - n_{\Delta(c)} \right)
\left( {n'}_{\Delta(a)} - {n'}_{\Delta(b)} \right),
\end{gather*}
which simplifies to precisely the same expression.
\end{proof}

\subsubsection{Contributions from hexagons} 
\label{sec:hexagon_contributions}
We now count intersections of type (ii) and (iii). As discussed above, each such intersection arises near a pair of glued hexagonal faces $\hexagon, \widehat{\hexagon}$ of polyhedra $\p,\widehat{\p}$. Recall each hexagon has long sides labelled $a,b,c$ in clockwise order and short sides labelled $ab,bc,ca$ in clockwise order. For each $k \in \{ab,bc,ca\}$ we write $\widehat{k}$ for the element of $\{ab,bc,ca\}$ such that the short side of $\hexagon$ labelled $k$ is glued to the short side of $\widehat{\hexagon}$ labelled $\widehat{k}$. Label the short sides of $\hexagon$ as $k,l,m$ in arbitrary clockwise order around the face, so $k,l,m$ are in cyclic alphabetical order and are unique up to cyclic permutation. Then the short sides of $\widehat{\hexagon}$ labelled $\widehat{k}, \widehat{l}, \widehat{m}$ are in anticlockwise order around the face, and $\widehat{k},\widehat{l},\widehat{m}$ are in cyclic anti-alphabetical order. We will take such a labelling of glued hexagons throughout the sequel.

Type (ii) terms come from short rectangles. Each glued pair of hexagons $\hexagon, \widehat{\hexagon}$ has a pair of adjacent short rectangles labelled $k$ in $\p$ and $\widehat{k}$ in $\widehat{\p}$; and another two pairs labelled $l, \widehat{l}$ and $m,\widehat{m}$ respectively.

We consider the pair of short rectangles labelled $k$ and $\widehat{k}$; the pairs labelled $l,\widehat{l}$ and $m,\widehat{m}$ are similar. There is a 3-switch in each short rectangle and a 1-switch along their common boundary; only the 3-switch contributes nontrivially to intersection numbers, by \refdef{intersection_number_branches}. 

The rectangle labelled $k$ has a branch labelled $\gamma_{\Delta(k)}$ on one side, where $\Delta$ is the adjacent triangle; this is also denoted $\gamma_{\hexagon(k)!}$. It has additional branches labelled $\gamma_{\hexagon(k,m)}$, $\gamma_{\hexagon(k)}$, $\gamma_{\hexagon(k,l)}$ on the other side, in anticlockwise order. When $\hexagon$ is understood, for ease of notation we rewrite these branches more simply as
\[
\gamma_{kl} = \gamma_{\hexagon(k,l)}, \quad
\gamma_k = \gamma_{\hexagon(k)}, \quad
\gamma_{km}= \gamma_{\hexagon(k,m)}.
\]
See \reffig{glued_short_rectangles}.
Then as per \reflem{intersection_numbers_short_rectangles}, since $k,l,m$ are in cyclic alphabetical order, the nonzero intersection numbers at this 3-switch are
\begin{gather}
\label{Eqn:k_intersection_numbers-1}
\gamma_{km} \cdot \gamma_k
= \gamma_{km} \cdot \gamma_{kl}
= \gamma_k \cdot \gamma_{kl} =1, \\
\label{Eqn:k_intersection_numbers-2}
\gamma_{kl} \cdot \gamma_k
= \gamma_{kl} \cdot \gamma_{km}
= \gamma_k \cdot \gamma_{km} = -1.
\end{gather}
Similarly, the 3-switch in the rectangle labelled $\widehat{k}$ has $\gamma_{\widehat{\Delta}(\widehat{k})}$ on one side, where $\widehat{\Delta}$ is the adjacent triangle; and branches $\gamma_{\widehat{\hexagon}(\widehat{k},\widehat{l})}, \gamma_{\widehat{\hexagon}(\widehat{k})}, \gamma_{\widehat{\hexagon}(\widehat{k},\widehat{m})}$ on the other, in anticlockwise order. Again for ease of notation we rewrite
\[
\widehat{\gamma}_{km} = \gamma_{\widehat{\hexagon}(\widehat{k},\widehat{m})}, \quad
\widehat{\gamma}_k = \gamma_{\widehat{\hexagon}(\widehat{k})}, \quad
\widehat{\gamma}_{kl} = \gamma_{\widehat{\hexagon}(\widehat{k},\widehat{l})}
\]
so the nonzero intersection numbers among them are
\begin{gather}
\label{Eqn:khat_intersection_numbers-1}
\widehat{\gamma}_{kl} \cdot \widehat{\gamma}_k
= \widehat{\gamma}_{kl} \cdot \widehat{\gamma}_{km}
= \widehat{\gamma}_k \cdot \widehat{\gamma}_{km} = 1, \\
\label{Eqn:khat_intersection_numbers-2}
\widehat{\gamma}_{km} \cdot \widehat{\gamma}_k
= \widehat{\gamma}_{km} \cdot \widehat{\gamma}_{kl}
= \widehat{\gamma}_k \cdot \widehat{\gamma}_{kl} = -1.
\end{gather}

\begin{figure}
  \def\svgscale{0.8}
  \import{figures/}{glued_short_rectangles.pdf_tex} \\
  \caption{Train track branches in adjacent short rectangles.}
  \label{Fig:glued_short_rectangles}
\end{figure}

In the abstract oscillating curve $\zeta = \sum_\gamma n_\gamma \gamma$, we simplify subscripts on coefficients $n_\gamma$ as for the branches $\gamma$, further simplifying from \refeqn{n_simplify-1}--\refeqn{n_simplify-2}:
\begin{gather}
\label{Eqn:n_simplify-3}
n_{kl} = n_{\gamma_{\hexagon(k,l)}}, \quad
n_k = n_{\gamma_{\hexagon(k)}}, \quad
n_{km} = n_{\gamma_{\hexagon(k,l)}}, \\
\label{Eqn:n_simplify-4}
\widehat{n}_{km} = n_{\gamma_{\widehat{\hexagon}(\widehat{k},\widehat{m})}}, \quad
\widehat{n}_k = n_{\gamma_{\widehat{\hexagon}(\widehat{k})}}, \quad
\widehat{n}_{kl} = n_{\gamma_{\widehat{\hexagon}(\widehat{k},\widehat{l})}}.
\end{gather}

For another abstract oscillating curve $\zeta' = \sum_\gamma n'_\gamma \gamma$, we use the same notation, with the addition of a $'$.

We can now compute type (ii) terms.
\begin{lem}\label{Lem:Type2Switches}
Let $\hexagon$ and $\widehat{\hexagon}$ be glued faces in polyhedra $\p, \widehat{\p}$ (possibly $\p=\widehat{\p}$). Label the short sides of $\hexagon$ by $k,l,m$ in clockwise order, and $\widehat{k}, \widehat{l}, \widehat{m}$ in anticlockwise order, so that $k$ glues to $\widehat{k}$, $l$ to $\widehat{l}$ and $m$ to $\widehat{m}$. Then the contribution to $2\zeta\cdot \zeta'$ from switches in the short rectangles with labels $k,\widehat{k}$ is given by:
\begin{align*}
n_k(n'_{kl} - n'_{km} + \widehat{n}'_{kl} - \widehat{n}'_{km}) + 
n_k'(-n_{kl} + n_{km} - \widehat{n}_{kl} + \widehat{n}_{km}) + \\
-n_{kl}n'_{km} + n_{km}n'_{kl} - \widehat{n}_{km}\widehat{n}'_{kl} + \widehat{n}_{kl} \widehat{n}'_{km}.
\end{align*}
\end{lem}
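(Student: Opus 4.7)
The plan is a direct local calculation. By \refdef{intersection_number} and \refdef{intersection_number_branches}, the only switches in the two short rectangles labelled $k$ and $\widehat{k}$ that can contribute to $2\zeta \cdot \zeta'$ are the two $3$-switches: the $1$-switch lying on the shared boundary has all local intersection numbers zero, so it can be ignored. So the total type (ii) contribution from this glued pair of short rectangles is the sum of two local terms, one for each $3$-switch, and my job is to evaluate each and then combine.

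For the $3$-switch in the rectangle labelled $k$, the relevant branches are $\gamma_{km}, \gamma_k, \gamma_{kl}$ (the branch $\gamma_{\hexagon(k)!}$ coming from the adjacent triangle has zero local intersections at this switch, by \refdef{LocalIntNumberKSwitch}). Using the intersection numbers \refeqn{k_intersection_numbers-1}--\refeqn{k_intersection_numbers-2}, I expand $\bigl(n_{km}\gamma_{km} + n_k \gamma_k + n_{kl}\gamma_{kl}\bigr) \cdot \bigl(n'_{km}\gamma_{km} + n'_k \gamma_k + n'_{kl}\gamma_{kl}\bigr)$ and collect like terms, which gives
\[
n_k(n'_{kl} - n'_{km}) + n'_k(n_{km} - n_{kl}) + n_{km}n'_{kl} - n_{kl}n'_{km}.
\]
The same computation for the $3$-switch in the rectangle labelled $\widehat{k}$, using \refeqn{khat_intersection_numbers-1}--\refeqn{khat_intersection_numbers-2}, yields
\[
\widehat{n}_k(\widehat{n}'_{km} - \widehat{n}'_{kl}) + \widehat{n}'_k(\widehat{n}_{kl} - \widehat{n}_{km}) + \widehat{n}_{kl}\widehat{n}'_{km} - \widehat{n}_{km}\widehat{n}'_{kl},
\]
with signs reversed on the $\widehat{n}_k$ and $\widehat{n}'_k$ terms compared to the unhatted case, reflecting that the labels $\widehat{k},\widehat{l},\widehat{m}$ are in anticlockwise order on $\widehat{\hexagon}$ while $k,l,m$ are in clockwise order on $\hexagon$.

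To merge the two contributions into a single expression in the unhatted variables, I apply the compatibility condition at the $1$-switch of \reflem{oscillating_conditions}(iii), which gives $\widehat{n}_k = -n_k$ and $\widehat{n}'_k = -n'_k$. Substituting into the $\widehat{k}$-rectangle term converts $\widehat{n}_k(\widehat{n}'_{km} - \widehat{n}'_{kl})$ into $n_k(\widehat{n}'_{kl} - \widehat{n}'_{km})$ and $\widehat{n}'_k(\widehat{n}_{kl} - \widehat{n}_{km})$ into $n'_k(\widehat{n}_{km} - \widehat{n}_{kl})$. Adding the two contributions and regrouping by coefficients of $n_k$, $n'_k$, and the remaining quadratic terms produces exactly the three lines in the statement of the lemma.

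I expect the main obstacle to be purely bookkeeping: tracking the cyclic alphabetical order on $\hexagon$ versus the anti-alphabetical order on $\widehat{\hexagon}$, and making sure the signs produced by \reflem{intersection_numbers_short_rectangles} match in the two rectangles before the gluing substitution is made. No other structural issue arises, since the $1$-switch drops out of intersections and the $3$-switch intersection numbers are linear-algebraic.
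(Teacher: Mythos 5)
Your proposal is correct and follows essentially the same route as the paper's proof: expand each of the two $3$-switch contributions using \refeqn{k_intersection_numbers-1}--\refeqn{k_intersection_numbers-2} and \refeqn{khat_intersection_numbers-1}--\refeqn{khat_intersection_numbers-2}, discard the $1$-switch, apply $\widehat{n}_k = -n_k$, $\widehat{n}'_k = -n'_k$ from \reflem{oscillating_conditions}(iii), and regroup. Both intermediate expansions and the final recombination match the paper's computation exactly.
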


\begin{proof}
The contribution to $2\zeta \cdot \zeta'$ from the 3-switch in the short rectangle labelled $k$ is given by
\[
\left( n_{km} \gamma_{km} + n_k \gamma_k + n_{kl} \gamma_{kl} \right)
\cdot
\left( n'_{km} \gamma_{km} + n'_k \gamma_k + n'_{kl} \gamma_{kl} \right).
\]
Expanding and using the intersection numbers \refeqn{k_intersection_numbers-1}--\refeqn{k_intersection_numbers-2} above yields
\[
n_{km} (n'_k + n'_{kl}) + n_k (-n'_{km} + n'_{kl}) + n_{kl} (-n'_{km} - n'_k).
\]
Similarly, the contribution from the 3-switch in the short rectangle labelled $\widehat{k}$ is given by
\[
\left( \widehat{n}_{kl} \widehat{\gamma}_{kl} + \widehat{n}_k \widehat{\gamma}_k + \widehat{n}_{km} \widehat{\gamma}_{km} \right) 
\cdot
\left( \widehat{n}'_{kl} \widehat{\gamma}_{kl} + \widehat{n}'_k \widehat{\gamma}_k + \widehat{n}'_{km} \widehat{\gamma}_{km} \right) 
\]
which expands with \refeqn{khat_intersection_numbers-1}--\refeqn{khat_intersection_numbers-2} as
\[
\widehat{n}_{kl} ( \widehat{n}'_k + \widehat{n}'_{km})
+ \widehat{n}_k ( - \widehat{n}'_{kl} + \widehat{n}'_{km} )
+ \widehat{n}_{km} ( - \widehat{n}'_{kl} - \widehat{n}'_{k} ).
\]
Summing these terms, noting that $n_k + \widehat{n}_k = 0$ and $n'_k + \widehat{n}'_k = 0$ by \reflem{oscillating_conditions}(iii), and rearranging gives the desired expression.
\end{proof}

We now consider type (iii) terms, arising from stations on long rectangles. Consider again two glued hexagonal faces $\hexagon, \widehat{\hexagon}$ of $\p, \widehat{\p}$ with labels $k,l,m$ and $\widehat{k},\widehat{l},\widehat{m}$ as above. Consider the pair of long rectangles glued along long edges of $\hexagon, \widehat{\hexagon}$, with incoming branches $\gamma_{\hexagon(k,l)} = \gamma_{kl}$, $\gamma_{\hexagon(l,k)} = \gamma_{lk}$ and $\gamma_{\widehat{\hexagon}(\widehat{l},\widehat{k})} = \widehat{\gamma}_{lk}$, $\gamma_{\widehat{\hexagon}(\widehat{k},\widehat{l})} = \widehat{\gamma}_{kl}$ in clockwise order around the station, as described in \refsec{train_tracks_on_H} and \reffig{glued_long_rectangles}. 

\reflem{intersection_numbers_long_rectangles} then gives the nonzero intersection numbers at the station, in simplified notation, as
\begin{equation}
\label{Eqn:simplified_station_intersection} 
\gamma_{kl} \cdot \gamma_{lk} = 1, \quad
\gamma_{lk} \cdot \gamma_{kl} = -1, \quad
\widehat{\gamma}_{lk} \cdot \widehat{\gamma}_{kl} = 1, \quad
\widehat{\gamma}_{kl} \cdot \widehat{\gamma}_{lk} = -1.
\end{equation}

\begin{lem}\label{Lem:Type3Switches}
The contribution to $2\zeta\cdot\zeta$ from the station in the long rectangle between short sides labelled $k,l$ and $\widehat{k},\widehat{l}$ as above is
\[ 
n_{kl} n'_{lk} - n_{lk} n'_{kl} - \widehat{n}_{kl} \widehat{n}'_{lk} + \widehat{n}_{lk} \widehat{n}'_{kl}.
\]
\end{lem}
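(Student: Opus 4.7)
The proof is a direct calculation using the local intersection numbers at the station, which have already been recorded in \refeqn{simplified_station_intersection} of the excerpt. The plan is simply to expand the contribution to $2\zeta \cdot \zeta'$ at the station using bilinearity of $\cdot$ and substitute.

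First, I would observe that only four branches of $\tau$ are incident to the station in question: $\gamma_{kl}, \gamma_{lk}, \widehat{\gamma}_{kl}, \widehat{\gamma}_{lk}$, with coefficients $n_{kl}, n_{lk}, \widehat{n}_{kl}, \widehat{n}_{lk}$ in $\zeta$ and the corresponding primed coefficients in $\zeta'$ (using the simplified notation introduced in \refeqn{n_simplify-3}--\refeqn{n_simplify-4}). All other branches of $\tau$ contribute $0$ to the local intersection number at this station by \refdef{intersection_number_branches}. Therefore the contribution from this station to $2\zeta\cdot\zeta'$ is
\[
\bigl( n_{kl}\gamma_{kl} + n_{lk}\gamma_{lk} + \widehat{n}_{kl}\widehat{\gamma}_{kl} + \widehat{n}_{lk}\widehat{\gamma}_{lk} \bigr) \cdot \bigl( n'_{kl}\gamma_{kl} + n'_{lk}\gamma_{lk} + \widehat{n}'_{kl}\widehat{\gamma}_{kl} + \widehat{n}'_{lk}\widehat{\gamma}_{lk} \bigr).
\]

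Next I would expand this product, keeping only the four terms that give nonzero contributions according to \refeqn{simplified_station_intersection}: namely $\gamma_{kl}\cdot\gamma_{lk}=1$, $\gamma_{lk}\cdot\gamma_{kl}=-1$, $\widehat{\gamma}_{lk}\cdot\widehat{\gamma}_{kl}=1$, and $\widehat{\gamma}_{kl}\cdot\widehat{\gamma}_{lk}=-1$. Substituting these values yields
\[
n_{kl}n'_{lk} - n_{lk}n'_{kl} - \widehat{n}_{kl}\widehat{n}'_{lk} + \widehat{n}_{lk}\widehat{n}'_{kl},
\]
which is exactly the claimed expression.

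There is no serious obstacle: the lemma is a bookkeeping verification, and the only thing to check carefully is that the labelling conventions of \refsec{train_tracks_on_H} (namely that $k,l$ are in cyclic alphabetical order while $\widehat{k},\widehat{l}$ are cyclic anti-alphabetical, with clockwise order $\gamma_{kl},\gamma_{lk},\widehat{\gamma}_{lk},\widehat{\gamma}_{kl}$ around the station) line up with the signs asserted in \reflem{intersection_numbers_long_rectangles}, so that \refeqn{simplified_station_intersection} is applied correctly. Once that is set up, the rest is immediate from bilinearity.
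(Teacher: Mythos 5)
Your proposal is correct and is essentially identical to the paper's own proof: both expand the local contribution at the station by bilinearity over the four incident branches and substitute the intersection numbers from \refeqn{simplified_station_intersection}. Nothing further is needed.
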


\begin{proof}
The relevant terms of $\zeta$ and $\zeta'$ give 
\[
\left( n_{kl} \gamma_{kl} + n_{lk} \gamma_{lk} + \widehat{n}_{kl} \widehat{\gamma}_{kl} + \widehat{n}_{lk} \widehat{\gamma}_{lk} \right) \cdot
\left( n'_{kl} \gamma_{kl} + n'_{lk} \gamma_{lk} + \widehat{n}'_{kl} \widehat{\gamma}_{kl} + \widehat{n}'_{lk} \widehat{\gamma}_{lk} \right).
\]
Expanding and using \refeqn{simplified_station_intersection} gives the desired expression.
\end{proof}

Combining \reflem{ContribTriangles}, \reflem{Type2Switches} and \reflem{Type3Switches} immediately gives the following expression for $2\zeta \cdot \zeta'$.
\begin{prop}
\label{Prop:IntOverFaces}
Using notation as above, $2\zeta \cdot \zeta'$ is given by
\begin{align}
\label{Eqn:intersection_number-1}
&\sum_\Delta n_{\Delta(ab)} n'_{\Delta(bc)} - n_{\Delta(bc)} n'_{\Delta(ab)} \\ 
\label{Eqn:intersection_number-2}
\begin{split}
&+ \sum_{\hexagon, \widehat{\hexagon}} \sum_{(k,l,m)} n_k (n'_{kl} - n'_{km} + \widehat{n}'_{kl} - \widehat{n}'_{km}) \\
&\hspace{3 cm} 
+ n'_k ( -n_{kl} + n_{km} - \widehat{n}_{kl} + \widehat{n}_{km} )
\end{split} \\
\label{Eqn:intersection_number-3}
&+ \sum_{\hexagon, \widehat{\hexagon}} \sum_{(k,l,m)} -n_{kl} n'_{km} + n_{km} n'_{kl} - \widehat{n}_{km} \widehat{n}'_{kl} + \widehat{n}_{kl} \widehat{n}'_{km} \\
\label{Eqn:intersection_number-4}
&+ \sum_{\hexagon, \widehat{\hexagon}} \sum_{(k,l,m)} 
n_{kl} n'_{lk} - n_{lk} n'_{kl} - \widehat{n}_{kl} \widehat{n}'_{lk} + \widehat{n}_{lk} \widehat{n}'_{kl}.
\end{align}
\end{prop}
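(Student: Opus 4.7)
The plan is to assemble \refprop{IntOverFaces} directly from the three preceding results \reflem{ContribTriangles}, \reflem{Type2Switches}, and \reflem{Type3Switches}, by decomposing $2\zeta\cdot\zeta'$ as a sum of local contributions at switches of $\tau$ and evaluating one type at a time.

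First I will unpack the definitions. \refdef{intersection_number} expresses $2\zeta\cdot\zeta'$ as the bilinear extension of the local intersection numbers of \refdef{intersection_number_branches}; these are supported only at the switches of $\tau$. By the construction of $\tau$ in \refsec{train_tracks_on_H}, the switches are exactly the 2-switches inside triangular faces, the 3-switches and 1-switches inside short rectangles, and the stations inside glued long rectangles. Because the 1-switches contribute zero to every local intersection number, the total splits cleanly into three subtotals, corresponding to types (i), (ii), (iii).

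Then I will evaluate each subtotal. The type (i) total is immediate: summing \reflem{ContribTriangles} over all triangular faces $\Delta$ of all polyhedra $\p_i$ yields line \refeqn{intersection_number-1}. For types (ii) and (iii) the key organisational observation is that every short rectangle and every station belongs to a unique pair of glued hexagonal faces $(\hexagon,\widehat{\hexagon})$. Specifically, short rectangles occur in glued pairs indexed by $(\hexagon,\widehat{\hexagon})$ together with a short-side label $k \in \{ab,bc,ca\}$; and stations are indexed by $(\hexagon,\widehat{\hexagon})$ together with an adjacent pair $(k,l)$ of short-side labels in cyclic alphabetical order. Rewriting each subtotal as a double sum — over pairs $(\hexagon,\widehat{\hexagon})$ and over cyclic permutations $(k,l,m)$ of $(ab,bc,ca)$ — and applying \reflem{Type2Switches} and \reflem{Type3Switches} respectively produces the remaining lines. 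For type (ii) I will split the output of \reflem{Type2Switches} into its terms linear in $n_k, n'_k$ and its terms purely quadratic in the $n_{kl}, n_{km}$ (and their hatted counterparts); these yield lines \refeqn{intersection_number-2} and \refeqn{intersection_number-3} respectively. \reflem{Type3Switches} produces line \refeqn{intersection_number-4}.

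The main point requiring care is bookkeeping: I must check that every switch is counted exactly once under these double sums, and that the cyclic-alphabetical versus cyclic-anti-alphabetical conventions on $(k,l,m)$ versus $(\widehat{k},\widehat{l},\widehat{m})$ — forced by the orientation-reversing nature of the hexagonal face pairings — are applied consistently, so that the hatted variables and signs land in exactly the positions appearing in the proposition. Since these conventions were already fixed throughout \refsec{polyhedra_abcs}, this is careful accounting rather than a real obstacle, and the proposition follows immediately from the assembly.
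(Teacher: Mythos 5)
Your proposal is correct and follows essentially the same route as the paper: the paper also classifies switches into the three types (triangles, short rectangles with trivially-contributing 1-switches, stations), evaluates each via \reflem{ContribTriangles}, \reflem{Type2Switches}, and \reflem{Type3Switches}, and assembles the result, with lines \refeqn{intersection_number-2} and \refeqn{intersection_number-3} together constituting the short-rectangle contribution. The bookkeeping conventions you flag are exactly those the paper records in the paragraph following the proposition.
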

The meaning of the summations is as follows: $\sum_\Delta$ means a sum over all triangles of all polyhedra $\p$. A summation $\sum_{(\hexagon, \widehat{\hexagon})}$ is a sum over pairs of glued hexagons. For each glued pair, we only count it once in the summation, arbitrarily choosing one to be $\hexagon$ and one to be $\widehat{\hexagon}$. Given such a pair, we choose labels $(k,l,m)$ and $(\widehat{k},\widehat{l},\widehat{m})$ such that $(k,l,m)$ is in cyclic alphabetical order (i.e.\ a cyclic permutation of $(ab,bc,ca)$; definition \refdef{cyclic_alphabetical_order}), $(\widehat{k},\widehat{l},\widehat{m})$ is in cyclic anti-alphabetical order (i.e.\ a cyclic permutation of $(ab,ca,bc)$), and so that the short sides of $\hexagon$ labelled $k,l,m$ are glued to the short sides of $\widehat{\hexagon}$ labelled $\widehat{k},\widehat{l},\widehat{m}$ respectively. Then the second summation $\sum_{(k,l,m)}$ denotes the sum over cyclic permutations of $k,l,m$.

Here \refeqn{intersection_number-1} gives type (i) intersections (from triangles), \refeqn{intersection_number-2}$+$\refeqn{intersection_number-3} gives type (ii) intersections (from short rectangles), and \refeqn{intersection_number-4} gives type (iii) intersections (from long rectangles). We will match these sums with sums arising from the symplectic form $\omega$.

\subsection{Expressing $\omega$ as a sum over faces}
\label{Sec:OmegaOverFaces}
As in \refdef{Holonomy}, $h(\zeta) = \sum_{\gamma} n_\gamma h(\gamma)$, where only those branches in triangles contribute, i.e. those of the form $\gamma_{\Delta(a)}$, $\gamma_{\Delta(b)}$ of $\gamma_{\Delta(c)}$ for some triangular face $\Delta$. 
As in \refeqn{n_simplify-1}, we write their coefficients as $n_{\Delta(a)}, n_{\Delta(b)}, n_{\Delta(c)}$ respectively. Thus
\begin{equation}
  \label{Eqn:holonomy_of_normal_arc}
  h(\zeta) = \sum_{i=1}^n \sum_{\Delta\subset{\p_i}} n_{\Delta(a)} \a_i + n_{\Delta(b)} \b_i + n_{\Delta(c)} \cc_i
\end{equation}
where the first sum is over all polyhedra $\p_1, \ldots, \p_n$ (corresponding to tetrahedra of the original triangulation $\calT$), and the second sum is over the four triangular faces of each $\p_i$. 

We then have the following result, which appears for cusp curves in \cite[page~49]{Choi:NZ}. 
\begin{lem}
\label{Lem:OmegaOverTriangles}
\[
\omega \left( h(\zeta), h(\zeta') \right)
= \sum_{\p} \sum_{\Delta, \nabla \subset \p}
n_{\Delta(ab)} n'_{\nabla(bc)} - n_{\nabla(bc)} n'_{\Delta(ab)}
\]
In this sum, $\Delta$ and $\nabla$ are triangular faces of the polyhedron $\p$, which may or may not coincide.
\end{lem}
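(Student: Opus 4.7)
The plan is to prove \reflem{OmegaOverTriangles} by direct computation: evaluate both sides explicitly and match them. The reason both sides look essentially the same is that the combinatorial holonomy of $\zeta$ restricted to $\p_i$ depends only on the total number of arcs of each type $a,b,c$ inside $\p_i$, and $\omega$ pairs generators within each polyhedron; on the other hand, the double sum over pairs of triangles in a single polyhedron factors as a product of these totals, after eliminating short edges by \reflem{oscillating_conditions}(i).

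More concretely, first I would introduce the abbreviations
\[
A_i = \sum_{\Delta \subset \p_i} n_{\Delta(a)}, \quad B_i = \sum_{\Delta \subset \p_i} n_{\Delta(b)}, \quad C_i = \sum_{\Delta \subset \p_i} n_{\Delta(c)},
\]
and analogously $A_i', B_i', C_i'$ for $\zeta'$. Then by \refeqn{holonomy_of_normal_arc},
\[
h(\zeta) = \sum_{i=1}^n \bigl( A_i \a_i + B_i \b_i + C_i \cc_i \bigr), \qquad h(\zeta') = \sum_{i=1}^n \bigl( A_i' \a_i + B_i' \b_i + C_i' \cc_i \bigr).
\]
Expanding $\omega(h(\zeta), h(\zeta'))$ bilinearly and inserting the values $\omega(\a_i,\b_j) = \omega(\b_i,\cc_j) = \omega(\cc_i,\a_j) = \delta_{ij}$ (with the opposite values swapped in sign), the cross terms between different polyhedra vanish, yielding
\[
\omega(h(\zeta), h(\zeta')) = \sum_{i=1}^n \bigl( A_i B_i' - B_i A_i' + B_i C_i' - C_i B_i' + C_i A_i' - A_i C_i' \bigr).
\]

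Next I would compute the right-hand side of the claimed formula. By \reflem{oscillating_conditions}(i), $n_{\Delta(ab)} = n_{\Delta(a)} - n_{\Delta(b)}$ and $n_{\nabla(bc)} = n_{\nabla(b)} - n_{\nabla(c)}$. Expanding the product and summing over $\Delta, \nabla \subset \p_i$ gives
\[
\sum_{\Delta, \nabla \subset \p_i} n_{\Delta(ab)} n'_{\nabla(bc)} = A_i B_i' - A_i C_i' - B_i B_i' + B_i C_i',
\]
and similarly
\[
\sum_{\Delta, \nabla \subset \p_i} n_{\nabla(bc)} n'_{\Delta(ab)} = B_i A_i' - B_i B_i' - C_i A_i' + C_i B_i'.
\]
Subtracting, the $B_i B_i'$ terms cancel and one obtains exactly the polyhedron-$i$ summand of $\omega(h(\zeta), h(\zeta'))$ displayed above. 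Summing over $i$ from $1$ to $n$ (equivalently, over polyhedra $\p$) completes the proof.

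There is no genuine obstacle: the result is essentially a bookkeeping identity once one recognises that $h$ collapses the per-triangle data to per-polyhedron totals, and that the $\Delta \neq \nabla$ terms on the right are exactly what is needed to produce the products $A_i B_i'$, etc. The only thing to be careful about is the orientation conventions and signs in \reflem{oscillating_conditions}(i) when rewriting $n_{\Delta(ab)}$ and $n_{\nabla(bc)}$; I would double-check these against \reffig{TrainTrack1} before performing the subtraction.
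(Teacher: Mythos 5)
Your proof is correct and follows essentially the same route as the paper: expand $h(\zeta)$ via \refeqn{holonomy_of_normal_arc}, use the $\omega$-orthogonality of generators from distinct tetrahedra, and apply \reflem{oscillating_conditions}(i) to trade the vertex coefficients $n_{\Delta(a)}, n_{\Delta(b)}, n_{\Delta(c)}$ for the edge coefficients $n_{\Delta(ab)}, n_{\Delta(bc)}$. The only (harmless) difference is bookkeeping: you aggregate into per-polyhedron totals $A_i, B_i, C_i$ and check that both sides equal the same expression, whereas the paper rewrites the summand for each ordered pair $(\Delta,\nabla)$ directly and finishes by re-indexing $(\Delta,\nabla)\mapsto(\nabla,\Delta)$ in the second sum.
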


\begin{proof}
Expanding out expressions for $h(\zeta)$ and $h(\zeta')$ from \refeqn{holonomy_of_normal_arc}, and noting that terms from different tetrahedra are $\omega$-orthogonal, $\omega(h(\zeta), h(\zeta'))$ is given by
\[
\sum_{i=1}^n \sum_{\Delta, \nabla \subset \p_i}
\omega \left(
n_{\Delta(a)} \a_i + n_{\Delta(b)} \b_i + n_{\Delta(c)} \cc_i,
n'_{\nabla(a)} \a_i + n'_{\nabla(b)} \b_i + n_{\nabla(c)} \cc_i \right)
\]
Since $\omega(\a_i, \b_i) = \omega(\b_i, \cc_i) = \omega(\cc_i, \a_i) = 1$, the above summand is
\[
n_{\Delta(a)} n'_{\nabla(b)} + n_{\Delta(b)} n'_{\nabla(c)} + n_{\Delta(c)} n'_{\nabla(a)} - n_{\Delta(a)} n'_{\nabla(c)} - n_{\Delta(b)}n'_{\nabla(a)} - n_{\Delta(c)} n'_{\nabla(b)}.
\]
Using the compatibility conditions from 
\reflem{oscillating_conditions}(i) (e.g. $n_{\Delta(ab)} = n_{\Delta(a)} - n_{\Delta(b)}$, etc.), we rewrite the summand as 
\begin{align*}
  &n_{\Delta(a)}(n'_{\nabla(b)} - n'_{\nabla(c)}) +
  n_{\Delta(b)}(n'_{\nabla(c)} - n'_{\nabla(a)}) +
  n_{\Delta(c)}(n'_{\nabla(a)} - n'_{\nabla(b)}) \\
	&\quad =  (n_{\Delta(a)}-n_{\Delta(b)})(n'_{\nabla(b)}-n'_{\nabla(c)}) -
  (n_{\Delta(c)}-n_{\Delta(b)})(n'_{\nabla(b)}-n'_{\nabla(a)}) \\
  &\quad = n_{\Delta(ab)} n'_{\nabla(bc)} - n_{\Delta(bc)} n'_{\nabla(ab)}
\end{align*}
Thus
\[
\omega (h(\zeta), h(\zeta')) =
\sum_{\p}
\sum_{\Delta, \nabla \subset \p}
n_{\Delta(ab)} n'_{\nabla(bc)} - \sum_{\p} \sum_{\Delta, \nabla \subset \p} n_{\Delta(bc)} n'_{\nabla(ab)} .
\]
Now recasting the sum over pairs $(\Delta, \nabla)$ as a sum over pairs $(\nabla, \Delta)$ in the second sum gives the desired expression.
\end{proof}
The good thing about recasting the second summand is that in the formula obtained for $\omega(h(\zeta), h(\zeta'))$, both terms involve the same sides $\Delta(ab)$ and $\nabla(bc)$. Indeed, such sides can be described quite precisely. 

Following Choi, we split the sum for $\omega(h(\zeta), h(\zeta'))$ into two parts: one where the triangles $\Delta, \nabla$ are the same triangle, and one where they are distinct triangles. The following observation about such sides is used in Choi's proof.
\begin{lem}
\label{Lem:WhereSidesLie}
Let $\Delta, \nabla$ be two distinct triangular faces of a truncated tetrahedron $\tau$. Then precisely one of the following occurs:
\begin{enumerate}
\item The sides $\Delta(ab)$ and $\nabla(bc)$ lie on the boundary of a single hexagonal face of $\tau$.
\item The side $\Delta(ab)$ lies on the boundary of a hexagonal face $\hexagon$ adjacent to both $\Delta$ and $\nabla$, but $\nabla(bc)$ does not. However $\nabla(ca)$ lies on the boundary of $\hexagon$.
\item The side $\nabla(bc)$ lies on the boundary of a hexagonal face $\hexagon$ adjacent to both $\Delta$ and $\nabla$, but $\Delta(ab)$ does not. However $\Delta(ca)$ lies on the boundary of $\hexagon$.
\end{enumerate}
\end{lem}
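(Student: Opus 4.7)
The plan is to reduce to three cases according to the label of the unique edge $e$ of $\tet$ connecting the two distinct ideal vertices $u, w$ corresponding to $\Delta$ and $\nabla$. The key combinatorial observation is that the side $\Delta(kl)$ of the triangular face at $u$ lies on the hexagonal face of $\overline{\tet}$ coming from the unique face of $\tet$ that contains both the $k$-edge and the $l$-edge at $u$. Consequently, the hexagonal faces adjacent to both $\Delta$ and $\nabla$ correspond precisely to the two faces of $\tet$ containing $e$, since a face of $\tet$ contains both $u$ and $w$ if and only if it contains the edge $e$ between them.

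From this I observe that $\Delta(ab)$ lies on a hexagon adjacent to both $\Delta$ and $\nabla$ if and only if the face of $\tet$ containing the $a$- and $b$-edges at $u$ also contains $e$, i.e., $e$ has label $a$ or $b$. Symmetrically, $\nabla(bc)$ lies on a hexagon adjacent to both if and only if $e$ has label $b$ or $c$. This immediately tells us that the three possible labels of $e$ select among the three claimed alternatives: $e$ labelled $b$ must give case (i), $e$ labelled $a$ can only produce case (ii), and $e$ labelled $c$ can only produce case (iii), so the three cases are mutually exclusive and one of them must hold.

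To verify these are indeed the stated cases, I use the paper's convention that the edges around each face of $\tet$ are labelled $a, b, c$ in clockwise order (viewed from outside). If $e$ has label $b$, then at the two endpoints of $e$ on a fixed face through $e$, the face's edge-pair labels are $\{a, b\}$ at one corner and $\{b, c\}$ at the other; thus precisely one of the two faces through $e$ has the $\{a,b\}$-corner at $u$ and the $\{b,c\}$-corner at $w$, and this face contains both $\Delta(ab)$ and $\nabla(bc)$, giving case (i). If $e$ has label $a$, the face of $\tet$ with $\{a, b\}$-corner at $u$ has, by clockwise ordering, the $\{c, a\}$-corner at $w$; thus this hexagon $\hexagon$ contains both $\Delta(ab)$ and $\nabla(ca)$, while $\nabla(bc)$ lies on the face of $\tet$ opposite to $e$ at $w$, which is not adjacent to $u$. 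This is case (ii); case (iii) follows by an identical argument with the roles of $a$ and $c$, and of $\Delta$ and $\nabla$, swapped. The main obstacle is keeping the orientation conventions (anticlockwise around ideal vertices versus clockwise around faces) straight; once a labelled picture of a single hexagonal face together with its three adjacent triangular corners is drawn, the case analysis becomes routine.
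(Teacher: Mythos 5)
Your proof is correct and follows essentially the same route as the paper's: both arguments rest on the correspondence between the side $\Delta(kl)$ and the face of $\tet$ containing the $k$- and $l$-edges at the corresponding ideal vertex, identify the common hexagons as the two faces through the connecting edge $e$, and then split into cases according to whether $e$ is labelled $a$, $b$, or $c$. The verification that each label forces exactly one of the three alternatives, via the fact that each face of $\tet$ carries one edge of each label, matches the paper's use of the cyclic $a,b,c$ arrangement around hexagons.
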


This lemma follows from inspecting \reffig{abcs} (or \reffig{abcs2} for the corresponding picture on a polyhedron $\p$); alternatively we can prove it explicitly as follows.

\begin{proof}
The triangular faces $\Delta, \nabla$ are adjacent to two common hexagonal faces, which we denote $\hexagon$ and $\hexagon'$. Of the three sides of $\Delta$, one is a short side of $\hexagon$, one is a short side of $\hexagon'$, and one is not a side of $\hexagon$ or $\hexagon'$. The same is true for $\nabla$.

Observe that if the long edge joining $\Delta$ and $\nabla$ has type $b$, then $\Delta(ab)$ and $\nabla(bc)$ certainly both lie in $\hexagon$ or $\hexagon'$; and the cyclic arrangement of $a,b,c$ edges around a hexagon implies that $\Delta(ab)$ and $\nabla(bc)$ are adjacent to the same hexagon. Conversely, if $\Delta(ab), \nabla(bc)$ are adjacent to the same hexagon, then the type of the long edge connecting them must be $\{a,b\} \cap \{b,c\} = b$.

Now suppose that $\Delta(ab)$ and $\nabla(bc)$ do not lie on the boundary of a single hexagonal face of $\tau_i$; equivalently, that the long edge joining $\Delta$ and $\nabla$ has type $a$ or $c$.

The side of $\Delta$ not in $\hexagon$ or $\hexagon'$, and the side of $\nabla$ not in $\hexagon$ or $\hexagon'$, have the same type (i.e.\ $ab$, $bc$ or $ca$). Thus at least one of $\Delta(ab), \nabla(bc)$ lie in $\hexagon$ or $\hexagon'$. But these sides do not lie in the same hexagon by assumption. Moreover, the side of $\Delta$ in $\hexagon$ (resp.\ $\hexagon'$), and the side of $\nabla$ in $\hexagon'$ (resp.\ $\hexagon$) again have the same type. Thus precisely one of $\Delta(ab), \nabla(bc)$ lie in a hexagonal face $\hexagon$ adjacent to $\Delta$ and $\nabla$. If $\Delta(ab)$ lies in $\hexagon$ but $\nabla(bc)$ does not, then the long side joining $\Delta$ and $\nabla$ must have type $a$, and the side of $\nabla$ adjacent to $\hexagon$ must be $\nabla(ca)$. Similarly, if $\nabla(bc)$ lies in $\hexagon$ but $\Delta(ab)$ does not, then the side of $\Delta$ adjacent to $\hexagon$ must be $\Delta(ca)$.
\end{proof}

We can use these observations to rewrite the sum over $\Delta \neq \nabla$ as a sum over hexagons.

\begin{lem}
\label{Lem:TrianglesToHexagons}
Let $\p$ be a polyhedron. Then
\[
\sum_{\Delta\neq \nabla} n_{\Delta(ab)} n'_{\nabla(bc)} - n_{\nabla(bc)} n'_{\Delta(ab)}  
= \sum_{\hexagon \subset \p}
\sum_{(k,l)}
n_{\hexagon(k)!} n'_{\hexagon(l)!} - n_{\hexagon(l)!} n'_{\hexagon(k)!}.
\]
\end{lem}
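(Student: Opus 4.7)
The plan is to use \reflem{WhereSidesLie} to partition the LHS sum over ordered pairs of distinct triangles into contributions indexed by hexagons of $\p$, with each hexagon carrying three contributions that I aim to match against the three terms of the RHS (interpreting $\sum_{(k,l)}$ as ranging over the three cyclic alphabetical pairs $(ab,bc),(bc,ca),(ca,ab)$, in parallel with the convention of \refprop{IntOverFaces}).

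For each ordered pair $(\Delta, \nabla)$ of distinct triangles in $\p$, \reflem{WhereSidesLie} assigns a unique distinguished hexagon $\hexagon$ according to the type ($b$, $a$, or $c$) of the long edge between $\Delta$ and $\nabla$, corresponding to cases (i), (ii), (iii). Writing $T_{ab}, T_{bc}, T_{ca}$ for the three triangles adjacent to $\hexagon$ via its $ab$-, $bc$-, $ca$-short sides, the three associated pairs are $(T_{ab}, T_{bc})$, $(T_{ab}, T_{ca})$, $(T_{ca}, T_{bc})$; since $4 \times 3 = 12$ matches the total count of ordered pairs, this is a bijection. Setting $A = n_{\hexagon(ab)!}$, $B = n_{\hexagon(bc)!}$, $C = n_{\hexagon(ca)!}$ (and similarly with primes for $\zeta'$), case (i) contributes $AB' - BA'$ directly. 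Cases (ii) and (iii) involve $n_{T_{ca}(\hexagon)(bc)}$ and $n_{T_{ca}(\hexagon)(ab)}$, neither of which is $A$, $B$, or $C$. Applying \refeqn{triangle_sum_zero} to $T_{ca}(\hexagon)$, namely $p + q + C = 0$ for $p := n_{T_{ca}(\hexagon)(ab)}$ and $q := n_{T_{ca}(\hexagon)(bc)}$, rewrites case (ii) as $(CA' - AC') + (pA' - Ap')$ and case (iii) as $(BC' - CB') + (Bq' - qB')$. Summing the three cases per hexagon thus produces precisely the RHS plus a ``cross'' residue $(pA' - Ap') + (Bq' - qB')$.

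The main obstacle is showing this cross residue sums to zero over all hexagons in $\p$. For the $(pA' - Ap')$ terms, I would pair each hexagon $\hexagon$ with the hexagon $\hexagon^*$ containing the $ab$-side of $T_{ca}(\hexagon)$; then $T_{ab}(\hexagon^*) = T_{ca}(\hexagon)$, giving $p = n_{T_{ab}(\hexagon^*)(ab)} = A(\hexagon^*)$. The hexagons $\hexagon$ and $\hexagon^*$ sit on two faces of the underlying ideal tetrahedron $\tet$ sharing an edge of type $a$, and at each endpoint of this shared edge the adjacent truncated triangle has its $a$-labelled vertex on that edge, so its two incident short sides, labelled $ab$ and $ca$, appear in swapped roles between $\hexagon$ and $\hexagon^*$. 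This mirror symmetry forces $T_{ca}(\hexagon^*) = T_{ab}(\hexagon)$, so the $(pA' - Ap')$ term at $\hexagon^*$ is the negation of the one at $\hexagon$ and the pair cancels. An entirely analogous pairing through shared edges of type $c$ annihilates the $(Bq' - qB')$ terms, completing the proof.
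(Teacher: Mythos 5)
Your proof is correct and follows essentially the same route as the paper's: both use \reflem{WhereSidesLie} to regroup the sum over ordered pairs of distinct triangles by hexagons, substitute via \refeqn{triangle_sum_zero} to eliminate the side not lying on the distinguished hexagon, and cancel the leftover antisymmetric cross-terms $pA'-Ap'$ and $Bq'-qB'$ by an involution. Your pairing of hexagons across shared $a$- and $c$-type edges is precisely the paper's cancellation under the swap $(\Delta,\nabla)\mapsto(\nabla,\Delta)$ within cases (ii) and (iii), seen from the hexagon side rather than the triangle side.
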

Here the first sum is over ordered pairs of distinct triangular faces $\triangle, \nabla$ of $\p$, the second sum is over hexagonal faces $\hexagon$ of $\p$, and the third sum is over $(k,l)$ in cyclic alphabetical order. Recall from \refsec{train_tracks_on_H} that $\gamma_{\hexagon(k)!}$ is the branch which runs from a triangle out to the short rectangle labelled $k$ adjacent to $\hexagon$. 
For $k \in \{ab,bc,ca\}$, $n_{\hexagon(k)!}$ (resp.\ $n'_{\hexagon(k)!}$) denotes the coefficient of $\gamma_{\hexagon(k)!}$ in $\zeta$ (resp.\ $\zeta'$).

\begin{proof}
Following Choi again, using \reflem{WhereSidesLie}, we write the sum over $\Delta \neq \nabla$ as a sum over hexagons. For any pair $\Delta \neq \nabla$, precisely one of the three cases of \reflem{WhereSidesLie} applies, and identifies a hexagon $\hexagon$, adjacent to both $\Delta$ and $\nabla$, of which at least one of $\Delta(ab)$ or $\nabla(bc)$ is a side.

When case (i) applies, the sides $\Delta(ab), \nabla(bc)$ lie in a common hexagon $\hexagon$, and thus $\gamma_{\Delta(ab)} = \gamma_{\hexagon(ab)!}$ and $\gamma_{\nabla(bc)} = \gamma_{\hexagon(bc)!}$. Indeed all such pairs in hexagonal faces arise precisely once, and so the terms in this case form the sum
\[
\sum_{\hexagon \subset \p}
n_{\hexagon(ab)!} n'_{\hexagon(bc)!} - n_{\hexagon(bc)!} n'_{\hexagon(ab)!}.
\]

When case (ii) applies, $\Delta(ab)$ and $\nabla(ca)$ lie in a common hexagonal face $\hexagon$, but $\nabla(bc)$ does not. Using \refeqn{triangle_sum_zero} we may replace $n_{\nabla(bc)}$ with $-n_{\nabla(ab)} - n_{\nabla(ca)}$, and similarly for $n'$, and observe that
\begin{align*}
n_{\Delta(ab)} & n'_{\nabla(bc)} - n_{\nabla(bc)} n'_{\Delta(ab)} \\
&= n_{\Delta(ab)} (-n'_{\nabla(ab)} - n'_{\nabla(ca)} ) -
(-n_{\nabla(ab)} - n_{\nabla(ca)}) n'_{\Delta(ab)} \\
&= n_{\nabla(ca)} n'_{\Delta(ab)} - n_{\Delta(ab)} n'_{\nabla(ca)}
- n_{\Delta(ab)} n'_{\nabla(ab)} + n_{\nabla(ab)} n'_{\Delta(ab)}
\end{align*}

Summing over all triangular faces $\Delta\neq\nabla$ of $\p$, the very last two terms sum to zero, as recasting the sum over $(\Delta, \nabla)$ as $(\nabla, \Delta)$ converts one into the other, and they cancel.

The remaining terms both involve $\Delta(ab)$ and $\nabla(ca)$, which refer to short sides of a common hexagon. Thus $n_{\Delta(ab)} = n_{\hexagon(ab)!}$ and $n_{\nabla(ca)} = n_{\hexagon(ca)!}$; similarly for $n'$. Indeed all such pairs of sides of hexagonal faces arise in his way. So the terms in this case form the sum
\[
\sum_{\hexagon \subset \p}
n_{\hexagon(ca)!} n'_{\hexagon(ab)!} - n_{\hexagon(ab)!} n'_{\hexagon(ca)!}.
\]

Similarly, in case (iii), $\nabla(bc)$ and $\Delta(ca)$ lie in a common hexagonal face $\hexagon$ but $\Delta(ab)$ does not. Replacing $n_{\Delta(ab)}$ with $-n_{\Delta(ca)} - n_{\Delta(bc)}$, and similarly for $n'$, yields
\begin{align*}
n_{\Delta(ab)} & n'_{\nabla(bc)} - n_{\nabla(bc)} n'_{\Delta(ab)} \\
&= (-n_{\Delta(ca)} - n_{\Delta(bc)}) n'_{\nabla(bc)}
- n_{\nabla(bc)} (-n'_{\Delta(ca)} - n'_{\Delta(bc)} ) \\
&= -n_{\Delta(ca)} n'_{\nabla(bc)} + n_{\nabla(bc)} n'_{\Delta(ca)} 
-n_{\Delta(bc)} n'_{\nabla(bc)} +n_{\nabla(bc)} n'_{\Delta(bc)},
\end{align*}
where again the last two terms cancel upon summation. Then $n_{\Delta(ca)} = n_{\hexagon(ca)!}$ and $n_{\nabla(bc)} = n_{\hexagon(bc)!}$, similarly for $n'$, and the 
 remaining terms sum to
\[
\sum_{\hexagon \subset \p}
n_{\hexagon(bc)!} n'_{\hexagon(ca)!} - n_{\hexagon(ca)!} n'_{\hexagon(bc)!}.
\]
Thus upon summing all terms we obtain the desired result.
\end{proof}

Combining lemmas \ref{Lem:OmegaOverTriangles} and \ref{Lem:TrianglesToHexagons} then immediately gives 
\begin{align}
\label{Eqn:omega_intermediate_form}
\begin{split}
\omega \left( h(\zeta), h(\zeta') \right) &= 
\sum_{\Delta}
n_{\Delta(ab)} n'_{\Delta(bc)} - n_{\Delta(bc)} n'_{\Delta(ab)} \\
& \quad + 
\sum_{\hexagon}
\sum_{(k,l)}
n_{\hexagon(k)!} n'_{\hexagon(l)!} - n_{\hexagon(l)!} n'_{\hexagon(k)!}
\end{split}
\end{align}
where $\sum_\Delta$ denotes a sum over all triangles of all polyhedra, $\sum_{\hexagon}$ denotes a sum over all hexagons of all polyhedra, and $\sum_{(k,l)}$ is again a sum over $\{(ab,bc),(bc,ca),(ca,ab)\}$, i.e.\ cyclically alphabetical orderings (\refdef{cyclic_alphabetical_order}).

In the following lemma, we expand the terms in the second sum using \refeqn{avast_exclamation} and make some cancellations to rewrite as as a sum over hexagons only involving terms of the form $n_k, n_{kl}$ (as in \refeqn{n_simplify-3}) and their $'$ versions.
\begin{prop}
\label{Prop:OmegaOverFaces}
With notation as above,
\begin{align}
\nonumber
\omega \left( h(\zeta), h(\zeta') \right) &= 
\sum_{\Delta}
n_{\Delta(ab)} n'_{\Delta(bc)} - n_{\Delta(bc)} n'_{\Delta(ab)} \\
\label{Eqn:omega_sum-1}
&+ \sum_{\hexagon} \sum_{(k,l,m)} n_k ( n'_{lk} - n'_{mk} ) - n'_k (n_{lk} - n_{mk}) \\
\label{Eqn:omega_sum-2}
&+ \sum_{\hexagon} \sum_{(k,l,m)} (n_{kl} + n_{km})(n'_{lk} + n'_{lm}) - (n'_{kl}+n'_{km})(n_{lk}+n_{lm}).
\end{align}
\end{prop}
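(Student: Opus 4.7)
The plan is to start from equation \refeqn{omega_intermediate_form} and expand each $n_{\hexagon(k)!}$ inside the hexagon sum using equation \refeqn{avast_exclamation}: $n_{\hexagon(k)!} = n_k + n_{kl} + n_{km}$ (with $(k,l,m)$ in cyclic alphabetical order on $\hexagon$), and similarly for $n_{\hexagon(l)!}$ and for the primed versions. Substituting into the bilinear antisymmetric combination $n_{\hexagon(k)!} n'_{\hexagon(l)!} - n_{\hexagon(l)!} n'_{\hexagon(k)!}$ produces eighteen terms per cyclic summand, which split naturally into pure edge-edge, mixed (exactly one factor central), and pure center-center. The pure edge-edge terms combine directly into $(n_{kl} + n_{km})(n'_{lk} + n'_{lm}) - (n'_{kl} + n'_{km})(n_{lk} + n_{lm})$, producing \refeqn{omega_sum-2} on the nose.

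To extract \refeqn{omega_sum-1}, I will use that $\sum_{(k,l,m)} f(k,l,m)$ is invariant under cyclic relabeling of dummy variables. In particular the identities
\[
\sum_{(k,l,m)} n_l n'_{kl} = \sum_{(k,l,m)} n_k n'_{mk}, \qquad \sum_{(k,l,m)} n_{kl} n'_l = \sum_{(k,l,m)} n_{mk} n'_k
\]
let four of the ten mixed-and-center terms per summand recombine (after appropriate cyclic shifts) into the four terms of line 2. What remains is a residual that, per hexagon, takes the bilinear antisymmetric form
\[
R_\hexagon = \sum_k n_k\bigl[(n'_l - n'_m) + (n'_{lm} - n'_{ml})\bigr] - \sum_k n'_k\bigl[(n_l - n_m) + (n_{lm} - n_{ml})\bigr],
\]
where $(k,l,m)$ ranges over cyclic-alphabetical triples of short sides of $\hexagon$.

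The main obstacle is that $R_\hexagon$ does not vanish on a single hexagon, so I must show $\sum_\hexagon R_\hexagon = 0$ by pairing each $\hexagon$ with its glued partner $\widehat{\hexagon}$. Condition (iii) of \reflem{oscillating_conditions} gives $\widehat{n}_{\widehat{k}} = -n_k$ (and similarly for primes); re-expressing $R_{\widehat{\hexagon}}$ in $\widehat{\hexagon}$'s own cyclic-alphabetical labelling---which is reversed relative to the gluing by the orientation-reversing convention of \refsec{train_tracks_on_H}---then makes the center-difference contributions $(n'_l - n'_m)$ from $\hexagon$ and $\widehat{\hexagon}$ exactly cancel coefficient by coefficient. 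Condition (iv) at the station between two short sides, in the form $n_{\hexagon(l,m)} - n_{\hexagon(m,l)} = \widehat{n}_{\widehat{\hexagon}(\widehat{m},\widehat{l})} - \widehat{n}_{\widehat{\hexagon}(\widehat{l},\widehat{m})}$, similarly cancels the edge-difference contributions. Hence $R_\hexagon + R_{\widehat{\hexagon}} = 0$ for every glued pair, so the total residual vanishes, completing the proof.
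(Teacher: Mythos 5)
Your proposal follows essentially the same route as the paper's proof: expand the hexagon sum of \refeqn{omega_intermediate_form} via \refeqn{avast_exclamation}, read \refeqn{omega_sum-2} off the pure edge--edge terms, recover \refeqn{omega_sum-1} from the mixed terms after cyclic relabelling of dummy indices, and kill the residual by pairing each hexagon with its glued partner, using conditions (iii) and (iv) of \reflem{oscillating_conditions} together with the reversed cyclic-alphabetical order on $\widehat{\hexagon}$. The one discrepancy is cosmetic: the centre--centre part of your displayed $R_\hexagon$ is the full antisymmetrisation $\sum n_k(n'_l-n'_m)-n'_k(n_l-n_m)$, which after cyclic relabelling equals \emph{twice} the actual leftover $\sum (n_k n'_l - n_l n'_k)$; this factor does not affect the cancellation argument.
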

Here $\sum_{(k,l,m)}$ is a sum over cyclically alphabetically ordered $k,l,m$.

We observe that just as the first line consists of contributions from branches of the train track in triangles, the second and third lines consist of contributions from branches of various types in short and long rectangles.

\begin{proof}
Comparing \refeqn{omega_intermediate_form} with the claim, it suffices to show that the second line of \refeqn{omega_intermediate_form} is equal to the sum of \refeqn{omega_sum-1} and \refeqn{omega_sum-2}. Using the compatibility conditions at 3-switches in short rectangles from \refeqn{avast_exclamation}, we replace $n_{\hexagon(k)!}$ with $n_{\hexagon(k)} + n_{\hexagon(k,l)} + n_{\hexagon(k,m)}$, where $(k,l,m)$ are cyclically alphabetically ordered. Using the simplified notation of \refeqn{n_simplify-1}--\refeqn{n_simplify-2}, this is just $n_k + n_{kl} + n_{km}$. Cyclically permuting $(k,l,m)$, and adding $'$ allows us to expand the second line of \refeqn{omega_intermediate_form} as
\begin{align*}
  \sum_{(k,l,m)} & ( n_k + (n_{kl} + n_{km}) ) (n'_l + (n'_{lm} + n'_{lk}) ) -
  ( n_l + (n_{lm} + n_{lk}) ) ( n'_k + (n'_{kl} + n'_{km}) ) \\
&= \sum_{(k,l,m)} n_k (n'_{lk} + n'_{lm}) - n_l (n'_{kl} + n'_{km}) 
	+ n'_l (n_{kl} + n_{km}) - n'_k (n_{lm} + n_{lk}) \\
	& \quad \quad \quad \quad + (n_{kl} + n_{km})(n'_{lk} + n'_{lm}) - (n_{lk} + n_{lm})(n'_{kl} + n'_{km}) \\
	& \quad \quad \quad \quad + n_k n'_l - n_l n'_k.
\end{align*}
In the first line of the resulting expression, cyclically permuting $(k,l,m)$ we observe that the sum of $n_l (n'_{kl} + n'_{km})$ is equal to the sum of $n_k (n'_{mk} + n'_{ml})$; similarly we can replace $n'_l (n_{kl} + n_{km})$ with $n'_k (n_{mk} + n_{ml})$. In the third line, we similarly replace $n_l n'_k$ with $n_k n'_m$. This gives the above summation as
\begin{align*}
&\sum_{(k,l,m)} n_k (n'_{lk} + n'_{lm} - n'_{mk} - n'_{ml}) - n'_k ( n_{lk} + n_{lm} -n_{mk} - n_{ml} ) \\
& \quad \quad + (n_{kl} + n_{km})(n'_{lk} + n'_{lm}) - (n_{lk} + n_{lm})(n'_{kl} + n'_{km}) \\
& \quad \quad + n_k (n'_l - n'_m).
\end{align*}
This sum contains all terms of \refeqn{omega_sum-1} and \refeqn{omega_sum-2}, but with a few extra terms; it now suffices to prove that
\[
\sum_{\hexagon} \sum_{(k,l,m)} n_k (n'_{lm} - n'_{ml}) - n'_k (n_{lm} - n_{ml}) + n_k (n'_l - n'_m) = 0.
\]
Regroup the sum over pairs of glued hexagons $\hexagon, \widehat{\hexagon}$ with labellings $k,l,m$ and $\widehat{k},\widehat{l},\widehat{m}$ as previously, with $k,l,m$ in cyclic alphabetical order, and $\widehat{k}, \widehat{l}, \widehat{m}$ in cyclic anti-alphabetical order. Then, for instance, $n_k n'_{lm}$ becomes $\widehat{n}_k \widehat{n}'_{ml}$ when it corresponds to a hexagon with a hat; similarly for other terms. The above sum thus becomes
\begin{align*}
&\sum_{\hexagon, \widehat{\hexagon}} \sum_{(k,l,m)} n_k (n'_{lm} - n'_{ml}) - \widehat{n}_k ( \widehat{n}'_{lm} - \widehat{n}'_{ml}) - n'_k (n_{lm} - n_{ml}) + \widehat{n}'_k (\widehat{n}_{lm} - \widehat{n}_{ml}) \\
& \quad \quad \quad + n_k (n'_l - n'_m) - \widehat{n}_k (\widehat{n}'_l - \widehat{n}'_m ).
\end{align*}
Now from the compatibility conditions at 1-switches of \reflem{oscillating_conditions}(iii) we have $\widehat{n}_k = - n_k$, so we obtain
\begin{align*}
&\sum_{\hexagon, \widehat{\hexagon}} \sum_{(k,l,m)} n_k (n'_{lm} - n'_{ml} + \widehat{n}'_{lm} - \widehat{n}_{ml}) - n'_k (n_{lm} - n_{ml} + \widehat{n}_{lm} - \widehat{n}_{ml}) \\
& \quad \quad \quad + n_k (n'_l - n'_m - n'_l + n'_m ).
\end{align*}
Finally, the compatibility condition at stations of \reflem{oscillating_conditions}(iv) is $n_{kl} - n_{lk} + \widehat{n}_{kl} - \widehat{n}_{lk} = 0$, and cyclic permutations and $'$ versions thereof. The sum thus collapses to zero.
\end{proof}

\subsection{Proof of main theorem}
\label{Sec:EvaluatingOmega}

To prove \refthm{MainThm}, we must show that the symplectic form $\omega(h(\zeta),h(\zeta'))$ is equal to $2 \zeta \cdot \zeta'$. We have written the former as a sum over triangles and hexagons in \refprop{OmegaOverFaces}, and the latter as a sum over triangles and hexagons in \refprop{IntOverFaces}. It is clear that the first sum \refeqn{intersection_number-1} of $2\zeta \cdot \zeta'$ matches the first sum of $\omega(h(\zeta),h(\zeta')$. Thus it remains to show that the sum of \refeqn{intersection_number-2} through \refeqn{intersection_number-4} is equal to the sum of \refeqn{omega_sum-1} and \refeqn{omega_sum-2}

Indeed, in the following two propositions we show that \refeqn{intersection_number-2} $=$ \refeqn{omega_sum-1} 
and \refeqn{intersection_number-3}$+$\refeqn{intersection_number-4} $=$ \refeqn{omega_sum-2}. These various equalities show the equivalence of various sub-counts of intersection numbers, with various contributions to $\omega$.

\begin{prop}
\begin{align*}
&\sum_{\hexagon, \widehat{\hexagon}} \sum_{(k,l,m)} n_k (n'_{kl} - n'_{km} + \widehat{n}'_{kl} - \widehat{n}'_{km}) + n'_k ( -n_{kl} + n_{km} - \widehat{n}_{kl} + \widehat{n}_{km} )  \\
&=
\sum_{\hexagon} \sum_{(k,l,m)} n_k ( n'_{lk} - n'_{mk} ) - n'_k (n_{lk} - n_{mk})
\end{align*}
\end{prop}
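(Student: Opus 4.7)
The plan is to rewrite the right-hand side's sum over hexagons as a sum over glued pairs $(\hexagon,\widehat{\hexagon})$, where each pair contributes two terms (one per face), and then convert the pair-wise sum to the left-hand side using the compatibility conditions at 1-switches and stations from \reflem{oscillating_conditions}(iii) and (iv). I fix a pair $(\hexagon,\widehat{\hexagon})$ using the standard labelling convention: $(k,l,m)$ is cyclic alphabetical on $\hexagon$ and $(\widehat{k},\widehat{l},\widehat{m})$ is cyclic anti-alphabetical on $\widehat{\hexagon}$, with the short edge labelled $k$ glued to the one labelled $\widehat{k}$, and so on.

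The contribution of $\hexagon$ to the right-hand side is simply the given cyclic-permutation sum on $\hexagon$. The contribution of $\widehat{\hexagon}$ is the analogous sum using cyclic alphabetical order on $\widehat{\hexagon}$'s labels; because $(\widehat{k},\widehat{l},\widehat{m})$ is anti-alphabetical, this amounts to summing over cyclic permutations of the triple $(\widehat{k},\widehat{m},\widehat{l})$ applied to the hatted coefficients. I then apply \reflem{oscillating_conditions}(iii), which gives $\widehat{n}_k=-n_k$ and $\widehat{n}'_k=-n'_k$ (and likewise for $l,m$), to replace every hatted central-branch coefficient in the $\widehat{\hexagon}$ contribution by its negative un-hatted version.

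After adding the contributions of $\hexagon$ and $\widehat{\hexagon}$ and collecting terms by leading factor, the $n_k$-coefficient becomes $n'_{lk}-n'_{mk}+\widehat{n}'_{lk}-\widehat{n}'_{mk}$, the $n'_k$-coefficient becomes $-n_{lk}+n_{mk}-\widehat{n}_{lk}+\widehat{n}_{mk}$, and analogously for the $l,m$ terms by cyclic permutation. The final manipulation is to apply the station compatibility from \reflem{oscillating_conditions}(iv), namely $n_{lk}+\widehat{n}_{lk}=n_{kl}+\widehat{n}_{kl}$ (and its primed and cyclically permuted versions), to swap $lk\leftrightarrow kl$ and $mk\leftrightarrow km$ in the paired combinations. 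This transforms the $n_k$-coefficient into $n'_{kl}-n'_{km}+\widehat{n}'_{kl}-\widehat{n}'_{km}$ and the $n'_k$-coefficient into $-n_{kl}+n_{km}-\widehat{n}_{kl}+\widehat{n}_{km}$, which are exactly the terms appearing in the LHS summand.

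The one subtle point, and the main potential obstacle, is the bookkeeping of index conventions: because the gluing of hexagonal faces is orientation-reversing, the cyclic alphabetical order on $\widehat{\hexagon}$ is the reverse of the cyclic order on $\hexagon$ in the shared labels, so the three cyclic permutations summed on $\widehat{\hexagon}$ do not line up term-by-term with those on $\hexagon$. Once this reordering is tracked carefully and the two compatibility substitutions are made in the right order (the 1-switch condition first to produce the hatted/un-hatted pairings on the central branches, then the station condition to align the station-branch indices), the rest is linear algebra and the identity is immediate.
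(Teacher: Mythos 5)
Your proposal is correct and follows essentially the same route as the paper's proof: regroup the right-hand sum over glued pairs (with the $l\leftrightarrow m$ index reversal on the hatted hexagon coming from the orientation-reversing gluing), apply the 1-switch condition $\widehat{n}_k=-n_k$ to collect terms by $n_k$ and $n'_k$, and finish with the station condition to swap $lk\leftrightarrow kl$ and $mk\leftrightarrow km$. The intermediate expressions you describe match the paper's exactly, including the order in which the two compatibility conditions are applied.
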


\begin{proof}
We regroup the right hand sum as a sum over pairs of glued hexagons $\hexagon, \widehat{\hexagon}$, with the usual matching labellings $k,l,m$ and $\widehat{k},\widehat{l},\widehat{m}$ which are cyclically alphabetical and anti-alphabetical respectively. As in the proof of \refprop{OmegaOverFaces}, a term such as $n_k n'_{lk}$ becomes $\widehat{n}_k \widehat{n}'_{mk}$ when in a hexagon with a hat.  The right hand sum is then given by
\begin{align*}
\sum_{\hexagon, \widehat{\hexagon}} \sum_{(k,l,m)}
n_k (n'_{lk} - n'_{mk}) - n'_k (n_{lk} - n_{mk})
- \widehat{n}_k (\widehat{n}'_{lk} - \widehat{n}'_{mk})
+ \widehat{n}'_k (\widehat{n}_{lk} - \widehat{n}_{mk})
\end{align*}
Matching conditions at 1-switches (\reflem{oscillating_conditions}(iii)) yield $\widehat{n}_k = -n_k$, $\widehat{n}'_k = - n'_k$ and we obtain
\begin{align*}
\sum_{\hexagon, \widehat{\hexagon}} \sum_{(k,l,m)}
n_k (n'_{lk} - n'_{mk} + \widehat{n}'_{lk} - \widehat{n}'_{mk}) + n'_k (- n_{lk} + n_{mk} - \widehat{n}_{lk} + \widehat{n}_{mk}).
\end{align*}
Finally, matching conditions at stations (\reflem{oscillating_conditions}(iv)) yield
\[
n_{lk} + \widehat{n}_{lk} = n_{kl} + \widehat{n}_{kl}, \quad
n_{mk} + \widehat{n}_{mk} = n_{km} + \widehat{n}_{km}.
\]
Substituting these and their primed versions, we obtain the desired left hand sum.
\end{proof}

\begin{prop}
\label{Prop:finalprop}
\begin{align*}
&\sum_{\hexagon, \widehat{\hexagon}} \sum_{(k,l,m)} -n_{kl} n'_{km} + n_{km} n'_{kl} - \widehat{n}_{km} \widehat{n}'_{kl} + \widehat{n}_{kl} \widehat{n}'_{km} \\
&+ \sum_{\hexagon, \widehat{\hexagon}} \sum_{(k,l,m)} 
n_{kl} n'_{lk} - n_{lk} n'_{kl} - \widehat{n}_{kl} \widehat{n}'_{lk} + \widehat{n}_{lk} \widehat{n}'_{kl} \\
&=
\sum_{\hexagon} \sum_{(k,l,m)} (n_{kl} + n_{km})(n'_{lk} + n'_{lm}) - (n'_{kl}+n'_{km})(n_{lk}+n_{lm}).
\end{align*}
\end{prop}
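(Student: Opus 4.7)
The plan is to follow the pattern of the preceding proposition (and of \refprop{OmegaOverFaces}): regroup the right-hand side, which is a sum over single hexagons, into a sum over pairs of glued hexagons, and then reconcile with the left-hand side using the station compatibility conditions at long rectangles from \reflem{oscillating_conditions}(iv).

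First, I would reorganise the RHS sum over single hexagons as a sum over pairs of glued hexagons $(\hexagon, \widehat{\hexagon})$. Each such pair contributes two summands: one from $\hexagon$ using its cyclic alphabetical labelling $(k,l,m)$, and one from $\widehat{\hexagon}$ using its own. Since $(\widehat{k}, \widehat{l}, \widehat{m})$ is cyclic anti-alphabetical, the cyclic alphabetical orderings on $\widehat{\hexagon}$'s short sides are cyclic permutations of $(\widehat{k}, \widehat{m}, \widehat{l})$. Substituting $(k', l', m') = (\widehat{k}, \widehat{m}, \widehat{l})$ into the RHS summand, and recalling that on $\widehat{\hexagon}$ we use $\widehat{n}$ rather than $n$, the contribution from $\widehat{\hexagon}$ becomes
\[
(\widehat{n}_{km} + \widehat{n}_{kl})(\widehat{n}'_{mk} + \widehat{n}'_{ml}) - (\widehat{n}'_{km} + \widehat{n}'_{kl})(\widehat{n}_{mk} + \widehat{n}_{ml}),
\]
summed over cyclic $(k,l,m)$. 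The regrouped RHS is then a sum over pairs $(\hexagon, \widehat{\hexagon})$ and over cyclic $(k,l,m)$, paralleling the structure of the LHS.

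Next, I would expand both sides as polynomials in the variables $n_{kl}, \widehat{n}_{kl}$ (and their primed analogues) and form the difference LHS $-$ RHS. The goal is to verify that this difference lies in the ideal generated by the station compatibility relations of \reflem{oscillating_conditions}(iv),
\[
n_{kl} + \widehat{n}_{kl} - n_{lk} - \widehat{n}_{lk} = 0,
\]
together with its primed analogue, one relation per station and thus three per pair of glued hexagons. Each residual quadratic monomial will be rewritten as a product of such a linear relation with a linear form in the remaining variables, and hence vanishes on abstract oscillating curves. As a sanity check, I have verified the identity on concrete test configurations (for instance, $n_{kl} = 1$, $n'_{lk} = 1$ with $\widehat{n}_{lk} = 1$, $\widehat{n}'_{lk} = -1$ to satisfy the stations), confirming both sides evaluate to the same integer.

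The main obstacle is purely computational bookkeeping. Each RHS summand (unhatted plus hatted) expands to sixteen monomials, and cycling over $(k,l,m)$ triples this, so one must reconcile roughly forty-eight monomials on the RHS against the twenty-four monomials of the LHS. A systematic approach---tracking the coefficient of each monomial type (for instance, $n_{kl}n'_{lm}$, $\widehat{n}_{kl}\widehat{n}'_{lk}$, and so on) on both sides---confirms that, after invoking the six station relations per pair, every discrepancy cancels. No new ideas are required beyond those already in the previous proposition's proof; the identity is essentially forced by the structure of the problem once the RHS has been properly regrouped.
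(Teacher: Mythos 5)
Your strategy coincides with the paper's: regroup the right-hand side over pairs of glued hexagons (your regrouped contribution from $\widehat{\hexagon}$, using the anti-alphabetical labelling, is correct) and then reconcile with the left-hand side via the station compatibility conditions of \reflem{oscillating_conditions}(iv). The difficulty is that the proposal stops exactly where the proof begins. The entire content of the proposition is the verification that the roughly $48$ right-hand monomials match the $24$ left-hand ones modulo the station relations, and you assert this (``a systematic approach\dots confirms that\dots every discrepancy cancels'') rather than exhibit it; a single numerical sanity check is not a substitute. There is also a point you gloss over: a priori the difference of the two sides need only vanish on the subspace cut out by \emph{all} the compatibility conditions of \reflem{oscillating_conditions}, so the claim that it lies in the ideal generated by the station relations (iv) alone is exactly what must be demonstrated, not assumed (it happens to be true here, whereas the companion proposition for \refeqn{intersection_number-2} also needs the 1-switch relations (iii)).

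For comparison, the paper organizes the computation as follows: first cyclically permute $(k,l,m)\mapsto(m,k,l)$ in the second right-hand term so that both terms acquire the common factor $(n_{kl}+n_{km})$; then regroup over glued pairs; then apply an auxiliary identity (\reflem{clever_rearrange}), proved from the station relations, which trades $(n_{kl}+n_{km})(n'_{lm}-n'_{ml})$ plus its hatted analogue for $(n_{lk}+n_{mk})(n'_{lm}-n'_{ml})$ plus its hatted analogue; and finally apply further cyclic permutations to land on the left-hand side. Some such explicit organization --- or a genuinely exhaustive coefficient-by-coefficient table --- is needed before your argument constitutes a proof.
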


Before proving this, we prove a lemma.
\begin{lem}
\label{Lem:clever_rearrange}
In a pair of glued hexagonal faces, with labellings $k,l,m$ and $\widehat{k},\widehat{l},\widehat{m}$ as above, we have
\begin{align*}
&(n_{kl} + n_{km})(n'_{lm} - n'_{ml}) + (\widehat{n}_{kl} + \widehat{n}_{km})(\widehat{n}'_{ml} - \widehat{n}'_{lm}) \\
&= (n_{lk}+n_{mk})(n'_{lm}-n'_{ml}) + (\widehat{n}_{lk}+\widehat{n}_{mk})(\widehat{n}'_{ml}-\widehat{n}'_{lm})
\end{align*}
\end{lem}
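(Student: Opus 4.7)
The plan is to subtract the right hand side from the left hand side, factor the resulting expression, and show both that the common factor vanishes by the station compatibility conditions at stations along the $(l,m)$ edge, or (equivalently) that the other bracketed factor vanishes for the same reason. Concretely, collecting like terms in LHS $-$ RHS gives
\[
\bigl[(n_{kl}+n_{km})-(n_{lk}+n_{mk})\bigr](n'_{lm}-n'_{ml})
+\bigl[(\widehat{n}_{kl}+\widehat{n}_{km})-(\widehat{n}_{lk}+\widehat{n}_{mk})\bigr](\widehat{n}'_{ml}-\widehat{n}'_{lm}).
\]

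The first step is to apply the station compatibility condition of \reflem{oscillating_conditions}(iv) to the two stations incident to $\hexagon,\widehat{\hexagon}$ along the long edges between short sides labelled $(k,l)$ and $(k,m)$. These give
\[
n_{kl}-n_{lk}=\widehat{n}_{lk}-\widehat{n}_{kl}, \qquad n_{km}-n_{mk}=\widehat{n}_{mk}-\widehat{n}_{km}.
\]
Setting $A=(n_{kl}+n_{km})-(n_{lk}+n_{mk})$, adding these two equalities shows $A=(\widehat{n}_{lk}+\widehat{n}_{mk})-(\widehat{n}_{kl}+\widehat{n}_{km})$, so the hatted coefficient in the display above equals $-A$.

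Thus LHS $-$ RHS simplifies to
\[
A\bigl[(n'_{lm}-n'_{ml})-(\widehat{n}'_{ml}-\widehat{n}'_{lm})\bigr]
= A\bigl[(n'_{lm}+\widehat{n}'_{lm})-(n'_{ml}+\widehat{n}'_{ml})\bigr].
\]
The second (and final) step is to apply \reflem{oscillating_conditions}(iv) to the station along the long edge between short sides labelled $(l,m)$, applied to $\zeta'$: this yields $n'_{lm}+\widehat{n}'_{lm}=n'_{ml}+\widehat{n}'_{ml}$, so the bracket vanishes.

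I do not anticipate any real obstacle: the identity is just a double application of the station condition at the three relevant long edges of the glued hexagonal pair $\hexagon,\widehat{\hexagon}$, once to $\zeta$ (to equate the two coefficients up to sign) and once to $\zeta'$ (to kill the remaining factor). The only thing to be careful about is the sign conventions coming from the cyclic alphabetical versus anti-alphabetical orderings, but these are already built into the labelling $k,l,m$ and $\widehat{k},\widehat{l},\widehat{m}$ used throughout \refsec{hexagon_contributions}.
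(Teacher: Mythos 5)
Your proof is correct and uses exactly the same ingredients as the paper's: the station compatibility conditions of \reflem{oscillating_conditions}(iv) applied to $\zeta$ at the $(k,l)$ and $(k,m)$ long edges and to $\zeta'$ at the $(l,m)$ long edge. The paper organizes the computation slightly differently (multiplying the vanishing combination $A+B$ by $(n'_{lm}-n'_{ml})$ and then substituting the primed station condition), but this is only a cosmetic rearrangement of the same argument.
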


\begin{proof}
From the compatibility condition at stations (\reflem{oscillating_conditions}(iv)) we have $n'_{lm} - n'_{ml} = \widehat{n}'_{ml} - \widehat{n}'_{lm}$. Moreover by the same compatibility conditions we have
\[
n_{kl} + \widehat{n}_{kl} - n_{lk} - \widehat{n}_{lk} = 0
\quad \text{and} \quad
n_{km} + \widehat{n}_{km} - n_{mk} - \widehat{n}_{mk} = 0.
\]
Thus
\begin{align*}
0 &= (n_{kl} + \widehat{n}_{kl} - n_{lk} - \widehat{n}_{lk} + n_{km} + \widehat{n}_{km} - n_{mk} - \widehat{n}_{mk})(n'_{lm} - n'_{ml}) \\
&= (n_{kl} + n_{km} - n_{lk} - n_{mk})(n'_{lm} - n'_{ml}) \\ 
& \quad \quad + (\widehat{n}_{kl} + \widehat{n}_{km} - \widehat{n}_{lk} - \widehat{n}_{mk})(\widehat{n}'_{ml} - \widehat{n}'_{lm}) \\
&= (n_{kl} + n_{km})(n'_{lm} - n'_{ml}) - (n_{lk} + n_{mk})(n'_{lm} - n'_{ml}) \\
& \quad \quad + (\widehat{n}_{kl} + \widehat{n}_{km})(\widehat{n}'_{ml} - \widehat{n}'_{lm})
- (\widehat{n}_{lk} + \widehat{n}_{mk})(\widehat{n}'_{ml} - \widehat{n}'_{lm}) \qedhere
\end{align*}
\end{proof}

\begin{proof}[Proof of \refprop{finalprop}]
Cyclically permuting $(k,l,m) \mapsto (m,k,l)$ in the second term, both terms in the right hand sum have factors of $(n_{kl} + n_{km})$, which becomes
\[
\sum_{\hexagon} \sum_{(k,l,m)} (n_{kl} + n_{km})(n'_{lk} + n'_{lm} - n'_{mk} - n'_{ml}).
\]
Now regrouping the sum over pairs of glued hexagons, with the usual labellings $k,l,m$ and $\widehat{k},\widehat{l},\widehat{m}$ we obtain
\begin{align*}
&\sum_{\hexagon,\widehat{\hexagon}} \sum_{(k,l,m)} (n_{kl}+n_{km})(n'_{lk} + n'_{lm} - n'_{mk} - n'_{ml}) \\
& \quad + (\widehat{n}_{kl} + \widehat{n}_{km})(\widehat{n}_{mk}'+\widehat{n}'_{ml}-\widehat{n}'_{lk}-\widehat{n}'_{lm}).
\end{align*}
Now using \reflem{clever_rearrange} the above sum becomes
\begin{align*}
&\sum_{\hexagon,\widehat{\hexagon}} \sum_{(k,l,m)}
(n_{kl}+n_{km})(n'_{lk}-n'_{mk}) + (n_{lk}+n_{mk})(n'_{lm}-n'_{ml}) \\
&\quad + (\widehat{n}_{kl}+\widehat{n}_{km})(\widehat{n}'_{mk}-\widehat{n}'_{lk}) + (\widehat{n}_{lk}+\widehat{n}_{mk})(\widehat{n}'_{ml}-\widehat{n}'_{lm}).
\end{align*}
Applying a cyclic permutation, we may rewrite the term $n_{mk}(n'_{lm}-n'_{ml})$ as $n_{kl}(n'_{mk}-n'_{km})$, the term  $n_{km}(n'_{lk}-n'_{mk})$ as $n_{lk}(n'_{ml}-n'_{kl})$, the term $\widehat{n}_{km}(\widehat{n}'_{mk}-\widehat{n}'_{lk})$ as $\widehat{n}_{lk}(\widehat{n}'_{kl}-\widehat{n}'_{ml})$ and the term $\widehat{n}_{mk}(\widehat{n}'_{ml}-\widehat{n}'_{lm})$ and $\widehat{n}_{kl} (\widehat{n}'_{km}-\widehat{n}'_{mk})$. The sum then becomes 
\begin{align*}
\sum_{\hexagon,\widehat{\hexagon}} \sum_{(k,l,m)}
n_{kl}(n'_{lk} - n'_{km}) + n_{lk}(n'_{lm}-n'_{kl})
+ \widehat{n}_{kl}(\widehat{n}'_{km}-\widehat{n}'_{lk}) + \widehat{n}_{lk}(\widehat{n}'_{kl}-\widehat{n}'_{lm}).
\end{align*}
Expanding and applying a final cyclic permutation to rewrite $n_{lk} n'_{lm}$ as $n_{km} n'_{kl}$ and $\widehat{n}_{lk} \widehat{n}'_{lm}$ as $\widehat{n}_{km} \widehat{n}'_{kl}$, we obtain the desired sum on the left.
\end{proof}

\section{A symplectic basis of oscillating curves}\label{Sec:SymplecticBasis}
Given a manifold $M$ with $n_\mfc \geq 1$ ends
and an ideal triangulation $\calT$, we have constructed a Heegaard surface $H$ with an oriented enhanced train track $\tau$. We now construct a collection of oscillating curves whose combinatorial holonomies form a standard symplectic basis for $\V$. These curves extend the rows of the standard Neumann-Zagier matrix for $M$ \cite{NeumannZagier}.

Throughout, we refer to the triangulation of a boundary component of $\overline{M}$ as the \emph{boundary triangulation}, and the cell decomposition on the intersection of a boundary component with the Heegaard surface as the \emph{Heegaard boundary decomposition}.

We will illustrate the process with the figure-8 knot complement. Figure~\ref{Fig:Fig8CuspOnly} shows the triangulation and boundary
triangulation of this knot complement. \reffig{Fig8Cusp_train_tracks} shows the Heegaard boundary decomposition, with train tracks.

\begin{figure}
\def\svgscale{1}
  \import{figures/}{Fig8Cusp_CuspOnly.pdf_tex}
  \caption{The figure-8 knot complement is made of two tetrahedra, with cusp triangulation shown. A choice of boundary curves $\mfl$ and $\mfm$ are shown, and one of the edge curves $C_1$.}
  \label{Fig:Fig8CuspOnly}
\end{figure}

\begin{figure}
\includegraphics[width=\textwidth]{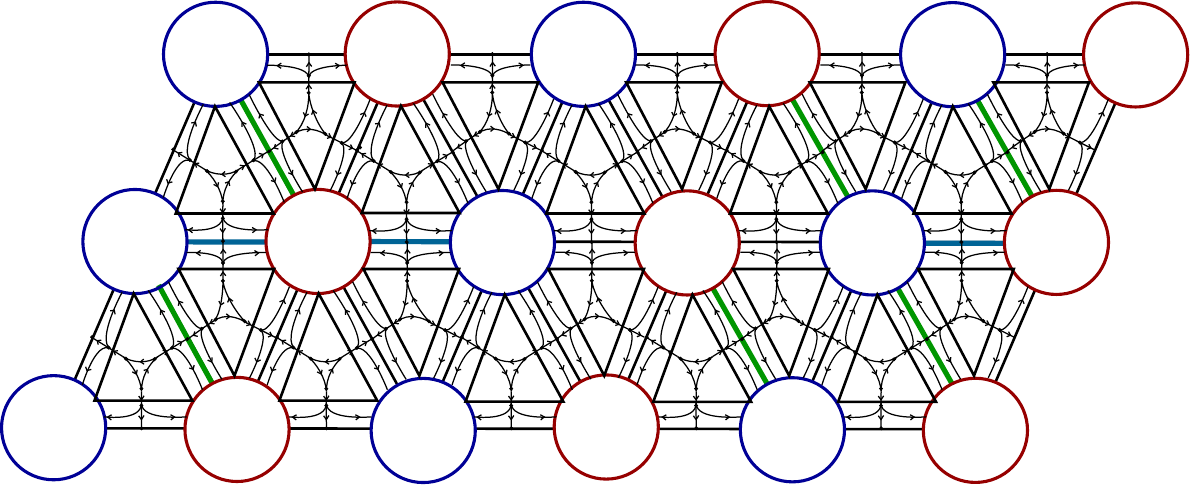}
  \caption{The same cusp as \reffig{Fig8CuspOnly}, after removing tubes around edges to give the handlebody decomposed into polyhedra. Cell decomposition along the cusp and train tracks are shown.}
  \label{Fig:Fig8Cusp_train_tracks}
\end{figure}

Each of the curves will be isotoped to lie on the train track $\tau$ constructed above. 

There are three types of curves in our basis. The first two are well known (at least when all ends are torus cusps) from the standard Neumann-Zagier matrix, as follows.

\smallskip

\emph{Boundary curves.} Denote the $n_\mfc$ boundary components of $\overline{M}$ by $S_1, \ldots, S_{n_\mfc}$ and the genus of $S_i$ by $g_i$.
On each boundary component $S_i$ we take $g_i$ pairs of oriented simple closed curves forming a symplectic basis for $H_1 (S_i)$ with respect to the usual intersection form.
We may realise these closed curves as smooth oriented curves along $\tau$ (hence oscillating curves), lying completely in the portion of $\tau$ in the 
end corresponding to $S_i$,
disjoint from branches running down tubes around ideal edges of the triangulation. 
We denote these oscillating curves 
$\mfm_1, \mfl_1, \ldots, \mfm_{g_i}, \mfl_{g_i}$ so that $\mfl_j \cdot \mfm_j = 1$ for $j=1, \ldots, g_i$, and all other intersection numbers are zero.

Note that cutting 
$S_i$ along the union of the $\mfl_j$ and $\mfm_j$ for $j=1, \ldots, g_i$ yields a $g_i$-punctured sphere,
which is connected, so any two points in this punctured sphere can be joined.
Thus any two points in $S_i$ can be joined by a curve which does not cross any $\mfl_j$ or $\mfm_j$.
Moreover, such curves can be taken to avoid vertices of the boundary triangulation.
For the figure-8 knot complement, $n_\mfc = 1$ and $g_1 = 1$, i.e.\ the boundary consists of a single torus; the boundary curves $\mfl, \mfm$ are shown in \reffig{Fig8CuspOnly}.
\smallskip

\emph{Edge curves}. These are the simplest curves. For each edge $E_i$, 
we simply take a curve which encircles this edge. Concretely, pick an endpoint of $E_i$ arbitrarily, which is a vertex in the boundary triangulation, and consider a circle around that vertex. It is realised by an oscillating curve of $\tau$ with zero intersection number with any $\mfm_j$ of $\mfl_j$, and it is disjoint from branches of $\tau$ that run down tubes around ideal edges of the triangulation. We denote this oscillating curve by $C_i$. For the figure-8 knot complement, the curve $C_1$ is shown, corresponding to the edge with one dash, in \reffig{Fig8CuspOnly}.

In fact, we will not keep all these curves: Neumann and Zagier showed that only a subset are linearly independent. We need to choose a collection of curves dual to those that form a linearly independent set. An advantage of our method below is that it picks out a linearly independent collection while selecting dual oscillating curves. 

\smallskip

\emph{Dual edge curves}. These curves are new to this paper. While they are not difficult to define, it takes a bit of work to prove that they have the required properties. In particular, they will be mutually disjoint, the $i$-th will intersect $C_i$ in exactly one point, and they will be disjoint from each of the boundary curves. Such a curve dual to the edge curve $C_i$ will run along branches of $\tau$ in the neighbourhood of the edge $E_i$. To turn this into a closed oscillating curve, we will need a set of edges through which the tube can return to the initial boundary component.
We define these now. 

\subsection{A tree-like set of edges}
Let $n$ be the number of tetrahedra in the ideal triangulation, and $n_E$ the number of edges. Recall that $n_\mfc$ denotes the number of ends.
Define the following graph.
\begin{definition}
The \emph{end (multi) graph} $G$ of $\calT$ is defined as follows.
\begin{enumerate}
\item The vertices of $G$ are the ends of $M$.
\item $G$ has one edge for each edge $E$ of $\calT$, with endpoints in $G$ joining vertices corresponding to the same ends meeting $E$.
\end{enumerate}
\end{definition}
In fact, $G$ is a quotient of the Heegaard surface $H$ by a map which collapses each boundary component of $\overline{M}$
to a point and collapses each annulus of $H$ around an edge of $\calT$ to an edge. For example, the end graph associated with the figure-8 knot complement consists of one vertex and two edges, each with both endpoints on that vertex. 

The graph $G$ is connected, as $M$ is connected.
Thus it has a spanning tree, and we have the following.

\begin{lem}
There exists a collection $\calE_\mfc'$ of $n_\mfc -1$ edges of $\calT$, forming a spanning tree for the end graph, which connect all ends of $M$.
\qed
\end{lem}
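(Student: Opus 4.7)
The plan is to observe that this lemma is essentially a translation of the standard graph-theoretic fact that a connected graph on $n_\mfc$ vertices admits a spanning tree, which necessarily contains exactly $n_\mfc - 1$ edges. The only real work is to verify that the end graph $G$ is connected, after which the lemma follows by choosing any spanning tree and letting $\calE_\mfc'$ be the collection of edges of $\calT$ corresponding to its edges.

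First I would establish the connectivity of $G$ in one of two equivalent ways. The more direct approach is to use the 1-skeleton of the ideal triangulation $\calT$: since $M$ is connected and $\calT$ is a triangulation, the 1-skeleton (the graph whose vertices are ideal vertices of $\calT$ and whose edges are the ideal edges of $\calT$) is connected, because any point of $M$ lies in the closure of some open tetrahedron, and any two tetrahedra are connected through a sequence of face identifications. Given any two ends $e, e'$ of $M$, pick ideal vertices $v, v'$ lying over $e$ and $e'$ respectively, take a path in the 1-skeleton from $v$ to $v'$, and project to $G$; the resulting sequence of edges in $G$ connects $e$ to $e'$. Alternatively, the same conclusion follows from the quotient description given in the excerpt: $G$ is a quotient of the Heegaard surface $H$, and $H$ is connected because it is the boundary of the connected Heegaard handlebody of the connected manifold $M$.

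Next I would invoke the standard fact that every finite connected graph contains a spanning tree, and that a spanning tree on a graph with $n_\mfc$ vertices has exactly $n_\mfc - 1$ edges. This can be proved by, for instance, greedily removing edges from cycles of $G$ until no cycles remain, or by a breadth-first search rooted at an arbitrary vertex; either procedure terminates with a spanning tree $T \subseteq G$. The number of edges of $T$ equals $n_\mfc - 1$ by induction on $n_\mfc$ (or by computing the Euler characteristic of a tree).

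Finally, since each edge of $G$ by definition corresponds to an edge of $\calT$, the edges of $T$ pull back to a collection $\calE_\mfc' \subseteq \calT$ of $n_\mfc - 1$ edges whose associated subgraph of $G$ is precisely $T$, a spanning tree connecting all ends of $M$. There is no significant obstacle here; the only point requiring any care is the justification of connectedness of $G$, which is immediate from either description above.
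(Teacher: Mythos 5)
Your proposal is correct and follows essentially the same route as the paper, which simply notes that $G$ is connected because $M$ is connected and then takes a spanning tree (the lemma is stated with an immediate \qed). You supply more detail on the connectivity of $G$ (via the $1$-skeleton of $\calT$ or the quotient of the Heegaard surface), but the substance is identical.
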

In other words, one can go from any end of $M$ to any other by travelling only along the chosen $n_\mfc - 1$ edges. As they form a tree, there is a unique sequence of edges connecting any two given ends (without backtracking).
For the figure-8 knot complement, the collection $\calE_\mfc'$ is empty, as a spanning tree consists only of the lone vertex. 

As all trees are bipartite, we may colour each end of $M$ black or white, so that each edge in $\calE_\mfc'$ connects ends of different colours.

\begin{lem}
There exists an edge $E_0$ of $\calT$ which connects ends of the same colour.
\end{lem}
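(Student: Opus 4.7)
The plan is to use the pigeonhole principle applied to the four vertices of a single ideal tetrahedron. The key observation is that an ideal tetrahedron has four ideal vertices, each lying at some end of $M$, but only two colours are available (black and white).

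First, I would fix any tetrahedron $\tet$ of $\calT$; one exists since the triangulation consists of $n \geq 1$ tetrahedra. Each of its four ideal vertices lies at one of the $n_\mfc$ ends of $M$, and each such end has been assigned one of two colours using the bipartition of the tree $\calE_\mfc'$. By pigeonhole, among these four vertices at least two must lie at ends sharing the same colour (including as a special case the possibility that they lie at the very same end). Let $v$ and $w$ denote two such vertices. Since any two vertices of a tetrahedron are joined by an edge of that tetrahedron, the segment $vw$ is an edge of $\calT$. Taking $E_0 = vw$ yields an edge whose two endpoints sit at ends of the same colour, as required.

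There is essentially no obstacle; the proof is a single pigeonhole application. The only subtle point to flag in the write-up is that ``connects ends of the same colour'' must be read to include the degenerate case where the two endpoints coincide at a single end, which in the end graph $G$ corresponds to a loop. This case poses no problem: two vertices at one end trivially have matching colours, so the edge still qualifies as the desired $E_0$. Equivalently, one can phrase the conclusion as: $G$ fails to be bipartite, since any tetrahedron contributes either a loop (when two vertices meet at one end) or, when its four vertices lie at four distinct ends, a copy of $K_4$ in $G$ and hence an odd cycle, so the bipartition induced by the spanning tree $\calE_\mfc'$ cannot extend to all of $G$.
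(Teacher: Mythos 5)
Your proof is correct. The paper argues by contradiction: if no such edge existed, the black/white colouring would make the end graph $G$ bipartite, yet each triangular face of each tetrahedron contributes a closed walk of odd length (a $3$-cycle) in $G$, which is impossible in a bipartite graph. Your direct pigeonhole argument on the four ideal vertices of a single tetrahedron is the same combinatorial fact viewed positively rather than by contradiction --- indeed a single face (three vertices, two colours) would already suffice --- and it has the small advantage of being constructive, pointing to a specific tetrahedron in which $E_0$ can be found, and of handling the degenerate case of a loop in $G$ explicitly, which the paper's phrasing in terms of ``cycles'' glosses over slightly. Both arguments are sound; yours is marginally more elementary since it does not invoke the characterisation of bipartite graphs by the absence of odd cycles.
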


\begin{proof}
If not, then $G$ is bipartite, hence contains no odd-length cycles. But $G$ contains many odd-length cycles, for instance a 3-cycle for each face of each tetrahedron of $\calT$.
\end{proof}

Thus $E_0$, and the edges of $\calE_\mfc'$ joining the endpoints of $E_0$, form an odd length cycle in $G$. Denote the set of edges $\calE_\mfc'\cup \{E_0\}$ by $\calE_\mfc$.

For the figure-8 knot complement, we may choose either edge to be $E_0$. We will choose it to be the edge with two dashes shown in \reffig{Fig8CuspOnly}. 

Aside from the edges of $\calE_\mfc$, there are there are $n_E -n_\mfc$ edges of $\calT$, which we denote $E_1, \ldots, E_{n_E-n_\mfc}$.

For the figure-8 knot, this is just $E_1$, consisting of the edge with one dash.

For each ideal edge of the triangulation, choose once and for all a long rectangle, with two branches of $\tau$ running through the long rectangle. We will require our curves to run over these branches of this long rectangle. 

\begin{lemma}\label{Lem:LongRectangle}
  Let $e$ be an ideal edge of a triangulation, and suppose a curve carried by $\tau$ runs through a fixed long rectangle $R$ adjacent to $e$, through both branches on $R$. The long rectangle $R$ lies in a tetrahedron $\tet$,
and is adjacent to a unique face $F$ of $\tet$. Then the curve must run through two triangles of the boundary triangulation adjacent to $e$ in $\tet$, entering the triangles along the side corresponding to the face $F$, and running across this side by way of a short rectangle containing one of the branches of $\tau$. 
\end{lemma}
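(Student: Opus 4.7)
The plan is to trace the curve through the two $3$-switches flanking $R$ and show that the oscillating compatibility condition at these switches forces the curve into the two adjacent triangles.

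First, I would unpack the local combinatorial setup around $R$. By the construction in \refsec{train_tracks_on_H} (see \reffig{TrainTrack2} and \reffig{glued_long_rectangles}), $R$ lies in a regular neighbourhood of a unique hexagonal face $\hexagon$ of the polyhedron $\p$ obtained from $\tet$, and $\hexagon$ corresponds precisely to the face $F$. The two short sides of $\hexagon$ at the two ends of the long edge corresponding to $e$ carry labels $k,l \in \{ab,bc,ca\}$, and the two branches on $R$ are exactly $\gamma_{\hexagon(k,l)}$ and $\gamma_{\hexagon(l,k)}$, emanating respectively from $3$-switches in the short rectangles of $\hexagon$ labelled $k$ and $l$.

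Next, I would apply the switch condition from \refdef{OscillatingCurve}(ii) at the $3$-switch in the short rectangle labelled $k$. At this switch, the four incident branches split as $\gamma_{\hexagon(k)!}=\gamma_{\Delta(k)}$ on one side, and $\gamma_{\hexagon(k,l)}$, $\gamma_{\hexagon(k)}$, $\gamma_{\hexagon(k,m)}$ on the opposite side, where $m$ is the third element of $\{ab,bc,ca\}$. Since the curve traverses $\gamma_{\hexagon(k,l)}$, the adjacent branch of the curve at this switch must lie on the opposite side, and therefore must be $\gamma_{\hexagon(k)!}=\gamma_{\Delta(k)}$. Hence the curve crosses the short rectangle labelled $k$ into the adjacent triangle $\Delta$. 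The identical argument at the $3$-switch in the short rectangle labelled $l$, applied to the branch $\gamma_{\hexagon(l,k)}$, shows that the curve also crosses the short rectangle labelled $l$ into its adjacent triangle.

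Finally, I would identify these two triangles: the short rectangles labelled $k$ and $l$ sit at the two ends of the long edge of $\hexagon$ corresponding to $e$, so the two triangles reached are precisely the two triangles of the boundary triangulation at the two endpoints of $e$ in $\tet$. Moreover, the side of each triangle that the curve crosses is the one adjacent to the short rectangle coming from $\hexagon$, i.e.\ the side corresponding to the face $F$. The only real obstacle is notational bookkeeping of the labels $k,l,m$ and the identification of the short rectangles with ends of $e$; once the picture around each $3$-switch is drawn, the conclusion is immediate from the oscillating switch condition.
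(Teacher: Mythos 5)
Your proof is correct and follows the same approach as the paper's (one-sentence) proof: each branch of $\tau$ in $R$ terminates at a $3$-switch in a short rectangle, and the switch condition forces the curve to continue onto the lone branch $\gamma_{\hexagon(k)!}=\gamma_{\Delta(k)}$ on the opposite side, carrying it across the side of the adjacent boundary triangle corresponding to $F$. Your version simply makes explicit the branch labels and the application of the switch condition that the paper leaves to the figure.
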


\begin{proof}
  The branches of $\tau$ in the long rectangle $R$ each enter a short rectangle adjacent to a triangle in the truncated polyhedron $\p$ associated to $\tet$. 
The train track only permits them to run into the triangle, along the side adjacent to $F$. See \reffig{cusp_from_long_rect} left.
\end{proof}

\begin{figure}
\def\svgscale{1.2}
\begingroup%
  \makeatletter%
  \providecommand\color[2][]{%
    \errmessage{(Inkscape) Color is used for the text in Inkscape, but the package 'color.sty' is not loaded}%
    \renewcommand\color[2][]{}%
  }%
  \providecommand\transparent[1]{%
    \errmessage{(Inkscape) Transparency is used (non-zero) for the text in Inkscape, but the package 'transparent.sty' is not loaded}%
    \renewcommand\transparent[1]{}%
  }%
  \providecommand\rotatebox[2]{#2}%
  \newcommand*\fsize{\dimexpr\f@size pt\relax}%
  \newcommand*\lineheight[1]{\fontsize{\fsize}{#1\fsize}\selectfont}%
  \ifx\svgwidth\undefined%
    \setlength{\unitlength}{267.26876765bp}%
    \ifx\svgscale\undefined%
      \relax%
    \else%
      \setlength{\unitlength}{\unitlength * \real{\svgscale}}%
    \fi%
  \else%
    \setlength{\unitlength}{\svgwidth}%
  \fi%
  \global\let\svgwidth\undefined%
  \global\let\svgscale\undefined%
  \makeatother%
  \begin{picture}(1,0.45754287)%
    \lineheight{1}%
    \setlength\tabcolsep{0pt}%
    \put(0,0){\includegraphics[width=\unitlength,page=1]{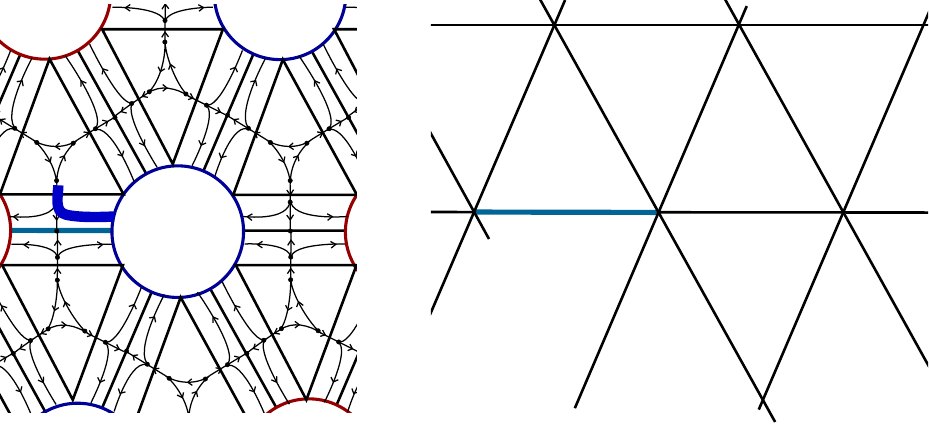}}%
    \put(0.60563156,0.19248352){\color[rgb]{0,0.39215686,0.58823529}\makebox(0,0)[lt]{\lineheight{0}\smash{\begin{tabular}[t]{l}$F$\end{tabular}}}}%
    \put(0.05650867,0.17143766){\color[rgb]{0,0.39215686,0.58823529}\makebox(0,0)[lt]{\lineheight{0}\smash{\begin{tabular}[t]{l}$F$\end{tabular}}}}%
    \put(0,0){\includegraphics[width=\unitlength,page=2]{cusp_from_long_rectangle.pdf}}%
    \put(0.1265985,0.21297273){\color[rgb]{0,0,0}\makebox(0,0)[lt]{\lineheight{1.25}\smash{\begin{tabular}[t]{l}$R$\end{tabular}}}}%
    \put(0,0){\includegraphics[width=\unitlength,page=3]{cusp_from_long_rectangle.pdf}}%
    \put(0.03620908,0.16614231){\color[rgb]{0,0.39215686,0.58823529}\makebox(0,0)[lt]{\lineheight{0}\smash{\begin{tabular}[t]{l}$F$\end{tabular}}}}%
    \put(0,0){\includegraphics[width=\unitlength,page=4]{cusp_from_long_rectangle.pdf}}%
  \end{picture}%
\endgroup%

  \caption{Entering the boundary triangulation from a long rectangle.}
  \label{Fig:cusp_from_long_rect}
\end{figure}

We therefore indicate a long rectangle by an arrow in the boundary triangulation that runs from one of the endpoints of the ideal edge $e$, across the side of a triangle that corresponds to the face $F$, and into the specified boundary triangle. See \reffig{cusp_from_long_rect} right.

For the figure-8 knot complement, and the edge with one dash $E_1$, select a long rectangle in the tetrahedron on the left of \reffig{Fig8CuspOnly} adjacent to the face labelled $F$ in that tetrahedron. Observe there is only one edge with one dash adjacent to the face $F$. In \reflem{LongRectangle}, this picks out the cusp triangles labeled $a$ and $b$, at either end of that edge. The corresponding vertex and side of the triangles is shown in \reffig{Fig8LongRect} in blue.

\begin{figure}
  \import{figures/}{Fig8LongRect.pdf_tex}
  \caption{Choosing long rectangles. For the edge with one dash $E_1$, choose a long rectangle in the tetrahedron on the left adjacent to the face $F$. This picks out cusp triangles $a$ and $b$, the side of the triangle corresponding to $F$, and the vertex of the triangle corresponding to $E_1$, shown by the thick blue arrows within these triangles. For the edge $E_0$, pick the face $A$ in the same tetrahedron. The cusp triangles, with vertices and sides are shown by the thick red arrows.}
  \label{Fig:Fig8LongRect}
\end{figure}

For the edge with two dashes $E_0$, select a long rectangle adjacent to the face labelled $A$ in the tetrahedron on the left of \reffig{Fig8CuspOnly}. This selects cusp triangles $a$ and $d$. The associated vertices and sides of these triangles are indicated by the thick red arrow in \reffig{Fig8LongRect}. 

Now we may construct the dual edge curves, for each of the edges $E_1, \ldots, E_{n_E-n_\mfc}$.

\begin{lemma}\label{Lem:CollectionGamma}
  There exists a collection of disjoint curves $\Gamma_1, \dots, \Gamma_{n_E-n_\mfc}$, each disjoint from all $\mfm_i$ and $\mfl_i$, with $\Gamma_i$ meeting $C_i$ exactly once, and disjoint from $C_j$ when $i\neq j$. Moreover, each such curve runs through an even number of long rectangles along branches in that long rectangle, and runs between distinct sides of each cusp triangle that it meets. 
\end{lemma}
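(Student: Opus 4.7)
The plan is to construct each $\Gamma_i$ by first choosing a closed walk in the end graph $G$ that traverses $E_i$ exactly once and uses only edges of $\calE_\mfc$ otherwise, then realizing the walk geometrically as a curve alternating between dives through long rectangles and arcs across the Heegaard boundary decomposition.

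Fix $i \in \{1, \ldots, n_E - n_\mfc\}$ and let $u, v$ be the ends at the two endpoints of $E_i$ (possibly $u=v$). Select a closed walk $W_i$ based at $u$ as follows. If $u, v$ have opposite colours in the bipartite $2$-colouring of $G$ induced by the tree $\calE_\mfc'$, let $W_i$ consist of $E_i$ followed by the unique $\calE_\mfc'$-path from $v$ back to $u$; this return path has odd length, so $W_i$ has even total length. If $u, v$ have the same colour, insert $E_0$: take $E_i$, then the $\calE_\mfc'$-path from $v$ to one endpoint of $E_0$, then $E_0$, then the $\calE_\mfc'$-path from the other endpoint of $E_0$ to $u$. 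Since $E_0$ joins same-coloured ends and $u, v$ agree in colour, all three tree sub-paths connect same-coloured vertices and hence have even length, so $W_i$ again has even length.

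Next, realize $W_i$ geometrically. For each edge $F$ of $W_i$, the preselected long rectangle on $F$ specifies, via \reflem{LongRectangle}, a cusp triangle at each end of $F$ together with a distinguished side of that triangle through which the dive passes. The curve $\Gamma_i$ is the concatenation of these dives with arcs across the boundary surfaces joining the exit triangle of one dive to the entry triangle of the next. Each cusp arc is chosen to (a) avoid all $\mfm_j$ and $\mfl_j$, (b) avoid the vertex defining $C_j$ for every $j \neq i$, (c) enter and exit each cusp triangle it meets along distinct sides, and (d) be mutually disjoint from all other cusp arcs of $\Gamma_i$ and of all other $\Gamma_{i'}$. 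Conditions (a) and (b) are achievable because cutting each boundary surface along the union of $\mfm_j$ and $\mfl_j$ yields a connected punctured sphere, which remains connected after removing finitely many additional vertices; (c) is a local isotopy within each cusp triangle, using that each cusp triangle has three sides and the train track in it permits switching from any side's switch to either of the other two; and (d) is achieved by constructing arcs sequentially with small perturbations, using that the boundary surfaces are $2$-dimensional. After isotoping onto $\tau$, $\Gamma_i$ is an oscillating curve because the number of stations it traverses equals the length of $W_i$, which is even.

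The intersection properties follow directly. Since $\Gamma_i$ dives through the long rectangle of $E_i$ exactly once, it meets $C_i$ in exactly one point near the vertex defining $C_i$. For $j \neq i$, the walk $W_i$ does not traverse $E_j$ (as $E_j \notin \calE_\mfc$ and $W_i$ uses only $E_i$ and $\calE_\mfc$-edges), so $\Gamma_i$ performs no dive along $E_j$; combined with (b), this yields disjointness from $C_j$. Disjointness from every $\mfm_j$ and $\mfl_j$ comes from (a), mutual disjointness of the $\Gamma_i$ from (d), and the distinct-sides condition from (c) together with the input of \reflem{LongRectangle} at each dive. The main obstacle is engineering the parity of the number of dives; this is exactly why $E_0$ was included in $\calE_\mfc$ to create an odd cycle in $G$, ensuring that any same-colour pair of endpoints can be connected by an even-length walk. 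Once the walks $W_i$ are chosen, the remaining constructions reduce to standard surface-topology arguments about curves on a punctured sphere with marked points removed.
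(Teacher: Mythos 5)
Your proposal is correct and takes essentially the same route as the paper: build an even-length closed walk in the end graph through $E_i$ using the spanning tree $\calE_\mfc'$ and, when the endpoints of $E_i$ share a colour, the odd edge $E_0$; then realise it by dives through the preselected long rectangles joined by boundary arcs chosen in the path-connected complement of the $\mfm_j$, $\mfl_j$, the triangulation vertices and previously chosen arcs, with evenness of the walk giving the oscillating structure. One small correction: in the same-colour case there are two tree sub-paths, not three, and neither need be even on its own --- rather, writing $w_1,w_2$ for the ends of $E_0$, the paths $v\to w_1$ and $w_2\to u$ have equal parity (since $u,v$ share a colour and $w_1,w_2$ share a colour), so their sum is even and the total length $2+|P_1|+|P_2|$ is still even; also note the paper treats more carefully the distinct-sides condition for the first and last arcs, whose endpoints are pinned to a fixed side, by rerouting around the ideal vertex rather than by a local isotopy.
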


\begin{proof}
We begin with $E_1$ and proceed inductively, forming a collection of disjoint closed curves, each disjoint from the boundary curves $\mfm_i$ and $\mfl_i$.

The edge $E_1$ connects two cusps, which may have the same or different colours. If the colours are different, then a collection of edges in 
$\calE_\mfc'$ (the spanning tree of $G$), along with $E_1$, forms a cycle of even length. If the colours are the same, then $E_1$, $E_0$, and a collection of edges in the spanning tree of $G$ (i.e.\ $E_1$ together with edges in $\calE_\mfc$) forms a cycle of even length. In either case, we will form a closed curve from these edges. In particular, the curve will run through the chosen long rectangle adjacent to $E_1$, across boundary cells avoiding $\mfm_i$ and $\mfl_i$, to the chosen long rectangles adjacent to selected edges of the $\calE_\mfc$.

By \reflem{LongRectangle}, a path through the chosen long rectangles adjacent to $E_1$ must run into a fixed triangle $\Delta_1$ of the boundary triangulation at a fixed side. Let $E\in\calE_\mfc$ be the next edge from $E_1$ in the cycle of edges. Again the choice of long rectangle for $E$ determines a fixed side of a fixed cusp triangle $\Delta$ adjacent to $E$. There exists an embedded path from the side of $\Delta_1$ adjacent to $E_1$,
to the side of $\Delta$ adjacent to $E$, that does not meet any $\mfm_i$, $\mfl_i$, or any vertices of the boundary triangulation, since the boundary triangulation cut along the $\mfm_i$, $\mfl_i$, and the vertices of the triangulation is path connected (indeed homeomorphic to a disc with some finite number of punctures).

We claim we may choose the path so that each arc of the path lying in a cusp triangle enters and leaves the triangle in distinct sides. For suppose not. If the arc is not the first or last between $\Delta$ and $\Delta_1$, then an innermost such arc cuts off a disk with the triangle side, which can be used to isotope the curve through, reducing the number of intersections with sides of the cusp triangulation. Repeating finitely many times removes all such arcs. It remains to consider the first arc or the last arc, with one endpoint on a fixed side of the fixed triangle $\Delta_1$, say. We may replace this arc by one that runs from the fixed side of $\Delta_1$ to the other side adjacent to the vertex corresponding to $E_1$, then around this vertex to the triangle in the opposite side. Because the path runs from $E_1$ into this triangle, we may choose this path in a neighbourhood of $E_1$, and therefore keep it disjoint from all other arcs in the path. From there, we may reconnect to the rest of the path, removing other bigons as needed. Similarly for the last arc, we may reroute the path around the vertex corresponding to $E$. In the case that $E$, $E_1$ lie on the same side of the same triangle, the effect will be to run the path once around that side, meeting distinct sides of triangles along the way.

Now the curve will run through the long rectangle adjacent to $E$, exiting the long rectangle at the other end of the annulus enclosing $E$, on a (not necessarily distinct) boundary component, in a branch of $\tau$ uniquely determining a side of a boundary triangle in this boundary component.
Repeat as above, connecting this side of the triangle to the next, choosing an arc disjoint from the $\mfm_i$, $\mfl_i$, any vertices of the boundary triangulation, and any previously selected arcs; again this is possible since the complement of the $\mfm_i$, $\mfl_i$, the vertices, and previous paths is a path connected space. Again we may choose the path to run through distinct sides of each triangle it enters. 

Repeat, continuing until we have run through each edge of the cycle, connecting back to the opposite end of the original ideal edge $E_1$, thus forming a closed curve consisting of arcs in long rectangles, and arcs in boundary triangles that run between distinct sides of each boundary triangle. Because the cycle has even length, it runs through an even number of long rectangles. Call the curve $\Gamma_1$. 

Now suppose disjoint curves $\Gamma_1$, $\dots$, $\Gamma_{i-1}$ are selected for edges $E_1$, $\dots$, $E_{i-1}$, respectively. We build a curve associated with the edge $E_i$ as above, by choosing arcs between boundary triangles determined by long rectangles. 

It may be the case that the curve we construct runs over a long rectangle corresponding to an edge $E$ of $\calE_\mfc$ that we have used in constructing a previous curve $\Gamma_j$. In that case, we choose an arc through the long rectangle to run parallel to $\Gamma_j$, but disjoint from it. Note this also fixes endpoints on the boundary triangulation that are disjoint from $\Gamma_j$, but adjacent to it. However, this does not affect our ability to choose disjoint arcs as above. In this manner we obtain disjoint curves $\Gamma_1, \dots, \Gamma_{n_E-n_{\mfc}}$.
\end{proof}

A choice of such curves for the figure-8 knot complement is shown in \reffig{Fig8OscCurve}. 

\begin{figure}
  \import{figures/}{Fig8OscCurve.pdf_tex}
  \caption{The arcs of the dual oscillating curve $\Gamma$ for the figure-8 knot.}
  \label{Fig:Fig8OscCurve}
\end{figure}

\begin{lemma}
The curves $\Gamma_1, \dots, \Gamma_{n_E-n_\mfc}$ may be isotoped on each boundary component of $\bdy\overline{M}$ to lie in the train track $\tau$, and oriented to become oscillating curves. Moreover, this can be done in such a way that the oriented intersection numbers satisfy $\Gamma_i\cdot C_j = \delta_{ij}$, $\Gamma_i\cdot\mfm_j=\Gamma_i\cdot\mfl_j=0$. 
\end{lemma}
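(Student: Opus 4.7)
The plan is to isotope each $\Gamma_i$ onto the enhanced train track $\tau$, then choose orientations making it an oscillating curve, and finally read off the intersection numbers from the disjointness properties in \reflem{CollectionGamma} via (the extension to enhanced train tracks of) \reflem{Intersection}.

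To realise $\Gamma_i$ on $\tau$, I would describe its isotopy piece by piece. An arc of $\Gamma_i$ inside a cusp triangle $\Delta$ enters through one side labelled $k_1$ and exits through a distinct side labelled $k_2$ by \reflem{CollectionGamma}; these share a unique common vertex $v$, so I isotope the arc onto the three consecutive branches $\gamma_{\Delta(k_1)}$, $\gamma_{\Delta(v)}$, $\gamma_{\Delta(k_2)}$ meeting at the two 2-switches of $\Delta$ on sides $k_1$ and $k_2$. An arc of $\Gamma_i$ crossing between two adjacent cusp triangles is isotoped across through the corresponding central branches $\gamma_{\hexagon(k)}$ and $\gamma_{\hexagon'(\widehat{k})}$ meeting at the 1-switch on the shared edge, together with the required transitions through the two 3-switches on either side. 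An arc of $\Gamma_i$ through one of the chosen long rectangles is isotoped onto the two branches of that long rectangle, crossing the station between them; the left or right branch at the 3-switch of each adjacent short rectangle provides the transition between the long rectangle and the cusp-triangle arc, in accordance with \reflem{LongRectangle}. This produces a cyclically ordered sequence of branches of $\tau$ whose consecutive pairs meet at switches and stations in the required way.

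Next I orient $\Gamma_i$. Pick any branch in the sequence and orient the arc on it arbitrarily, then propagate. At each $k$-switch encountered, the two consecutive incident arcs lie on opposite sides of the switch, so the $k$-switch condition of \refdef{OscillatingCurve} forces them to be both forwards or both backwards, and the orientation extends uniquely. At each station in a chosen long rectangle, $\Gamma_i$ uses the two branches $\gamma_{\hexagon(k,l)}$ and $\gamma_{\hexagon(l,k)}$, which sit on the same side of the station but at opposite ends, so the opposite-ends station condition of \refdef{OscillatingCurve} forces the orientation to flip. Since \reflem{CollectionGamma} guarantees that $\Gamma_i$ traverses an even number of stations, the orientation has flipped an even number of times upon return and so agrees with its starting value; this gives a consistent oscillating orientation on all of $\Gamma_i$.

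Finally I verify the intersection numbers. By the extension of \reflem{Intersection} to enhanced train tracks discussed after \refdef{intersection_number_branches}, together with \refdef{intersection_number}, the oscillating intersection number $\zeta \cdot \zeta'$ equals the algebraic intersection number of the underlying embedded curves on $H$. Isotopy preserves algebraic intersection number, and by \reflem{CollectionGamma} the curves $\Gamma_i$ and $C_j$ meet in a single transverse point when $i=j$ and are disjoint otherwise, while $\Gamma_i$ is disjoint from every $\mfm_j$ and $\mfl_j$. Hence the identities $\Gamma_i \cdot C_j = \pm\delta_{ij}$ and $\Gamma_i \cdot \mfm_j = \Gamma_i \cdot \mfl_j = 0$ follow, and we fix the ambiguous sign by reversing the orientation of $\Gamma_i$ if necessary. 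The main obstacle is the parity argument in the second step: a closed oscillating curve admits a consistent orientation only if it crosses stations from one end to the other an even number of times, which is precisely what the construction of $\calE_\mfc$ (in particular, the use of $E_0$ to complete monochromatic cycles to even length) was designed to ensure, so that once the isotopy is in place the orientation and intersection-number steps follow essentially by inspection.
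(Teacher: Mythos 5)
Your proposal is correct and follows essentially the same route as the paper's proof: isotope arc-by-arc onto the unique admissible branches (around the shared vertex in each triangle, through the short-rectangle branches between triangles, and onto the two branches of each chosen long rectangle), propagate orientation using the even number of stations, and deduce the intersection numbers from the disjointness in \reflem{CollectionGamma} together with the agreement of the oscillating intersection number with the algebraic one. Your version is somewhat more explicit about the switch-by-switch bookkeeping and the sign normalisation, but the content is the same.
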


\begin{proof}
Each curve runs through boundary components and long rectangles. Within the long rectangles, we isotope the curves onto the two branches in the long rectangle. After exiting a long rectangle, the curves must run into a short rectangle and then into a boundary triangle. Because we have chosen arcs in the boundary triangulation to run between distinct sides of the triangles, each such arc can be isotoped into the corresponding triangle of the Heegaard boundary composition, into the unique branch running between the corresponding sides. Then between boundary triangles, it runs through the unique branches of the short rectangles that take it to the next cusp triangle. Finally, at each station in a long rectangle, we change the orientation of the curve. Because it meets an even number of stations, this defines the required orientation on an oscillating curve.

To verify intersection numbers, observe that rather than isotoping each curve onto a branch of the train track, we may instead isotope them into a neighbourhood of the branch, so that they run in parallel in this neighbourhood without affecting their geometric intersection numbers. Similarly for the curves $C_i$, $\mfm_j$, $\mfl_j$. The intersection number of oscillating curves then agrees with algebraic intersection number, being careful to take into account the reversal of orientation when proceeding through a station from one end of a long rectangle to another.
\end{proof}

Recall that we define $g = \sum_{i=1}^{n_\mfc} g_i$ to be the sum of genera of boundary components.
We can now prove the following main theorem.
\begin{thm}\label{Thm:OscAlgorithm}
There exists a constructive algorithm to determine oscillating curves $C_1, \dots, C_{n_E-n_\mfc}$ associated with edge gluings, and dual oscillating curves $\Gamma_1, \dots, \Gamma_{n_E-n_\mfc}$, so that these curves along with a standard basis for $H_1 (\partial M)$, 
$\mfm_1$, $\mfl_1$, $\ldots$, $\mfm_{g}$, $\mfl_{g}$ form a symplectic basis, in the sense that
\begin{align*}
\omega(h(C_i), h(G_j)) &= 2 C_i \cdot G_j = 2 \delta_{ij} \\
\omega(h(\mfm_i), h(\mfl_j)) &= 2 \mfm_i \cdot \mfl_j = 2 \delta_{ij}
\end{align*}
and $\omega$, evaluated on the combinatorial holonomy of any other pair of curves, is zero.
\end{thm}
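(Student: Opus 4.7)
The plan is to combine the explicit constructions in the preceding lemmas with \refthm{MainThm}, which converts each instance of $\omega(h(\cdot),h(\cdot))$ into twice a geometric intersection number; every equation in the theorem then reduces to a single intersection number computation on the Heegaard surface. Concretely we take the $\mfm_j, \mfl_j$ to be the boundary curves (a standard symplectic basis for $H_1(\partial\overline{M})$), the $C_i$ to be the edge loops around an endpoint of each $E_i$ for $i=1,\ldots,n_E-n_\mfc$, and the $\Gamma_i$ to be the dual edge curves furnished by \reflem{CollectionGamma} and realised as oscillating curves by the subsequent lemma.

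After invoking \refthm{MainThm} the remaining work is a pair-by-pair check of intersection numbers. The equalities $\mfm_i\cdot\mfl_j = \delta_{ij}$ and $\mfm_i\cdot\mfm_j=\mfl_i\cdot\mfl_j=0$ hold by the choice of a standard symplectic basis on $\partial\overline{M}$. Each $C_i$ is a small loop around a single vertex of the boundary triangulation, so it bounds a disc in $\partial\overline{M}$; hence $C_i\cdot C_j$, $C_i\cdot\mfm_j$ and $C_i\cdot\mfl_j$ all vanish (the loops $C_i$ and $C_j$ for $i\neq j$ being moreover disjoint in the boundary triangulation). The crucial identities $\Gamma_i\cdot C_j = \delta_{ij}$, $\Gamma_i\cdot\Gamma_j=0$ for $i\neq j$, and $\Gamma_i\cdot\mfm_j=\Gamma_i\cdot\mfl_j=0$ are exactly what is built into \reflem{CollectionGamma} and the oscillating-curve realisation that follows it, once orientations are fixed so that the sign of $\Gamma_i\cdot C_i$ is $+1$.

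It remains to confirm that this collection is actually a basis of $\V$. A brief Euler-characteristic bookkeeping on the boundary triangulation, with $F_\partial=4n$, $E_\partial=6n$ and $V_\partial=2n_E$, combined with $\chi(\partial\overline{M})=2n_\mfc-2g$, gives $n_E-n_\mfc=n-g$. Therefore we have exactly $2(n_E-n_\mfc)+2g=2n=\dim\V$ curves. Their pairwise symplectic pairings form $2J$ for the standard symplectic matrix $J$, which is nondegenerate, so the combinatorial holonomies are automatically linearly independent, and hence form a symplectic basis of $\V$.

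The main obstacle is the orientation and sign bookkeeping: one must verify that the oscillating orientations assigned to each $\Gamma_i$ at stations (reversing at each station, and hence consistent precisely because of the even-parity condition ensured in \reflem{CollectionGamma}) produce $\Gamma_i\cdot C_i=+1$ rather than $-1$, and that the inductive flexibility used to make the $\Gamma_j$ pairwise disjoint — in particular the choice to run parallel copies through already-used long rectangles and the isotopies that remove bigons against triangle sides — does not inadvertently create extra intersections with the $C_i$, $\mfm_j$ or $\mfl_j$. With these checks in place, all other aspects of the theorem follow directly from the dictionary provided by \refthm{MainThm}.
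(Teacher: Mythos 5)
Your proposal is correct and follows essentially the same route as the paper: the intersection numbers come from the constructions in the preceding lemmas, \refthm{MainThm} converts them into the stated values of $\omega$, and the Euler characteristic count $2n_\mfc - 2g = 2n_E - 6n + 4n$ shows the collection has exactly $2n = \dim \V$ elements, whence nondegeneracy of the pairing gives a basis. Your extra attention to sign and disjointness bookkeeping is exactly what the paper delegates to \reflem{CollectionGamma} and the lemma following it.
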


\begin{proof}
By construction, the boundary curves form a symplectic basis for $H_1 (\partial M)$. By construction, the intersection numbers are as claimed; by \refthm{MainThm}, $\omega$ evaluates as claimed. We thus have $2(n_E - n_\mfc) + 2g$ oscillating curves satisfying the desired symplectic properties; to see that this is a basis then it suffices to show that this number equals $2n$, the dimension of $\V$.

This follows from an Euler characteristic count. 
On the one hand, $\chi(\partial M)$ is given by
\[
\sum_{i=1}^{n_\mfc} \chi(S_i) = \sum_{i=1}^{n_\mfc} (2-2g_i) = 2 n_\mfc - 2g.
\]
On the other hand, the boundary triangulation has $2 n_E$ vertices (one at each end of each ideal edge), $6n$ edges (there are $4n$ faces of tetrahedra, which glue into $2n$ faces of the ideal triangulation, each contributing $3$ edges to the boundary triangulation) and $4n$ triangles ($4$ from each tetrahedron). Hence
\[
\sum_{i=1}^{n_\mfc} \chi(S_i) = 2n_E - 6n + 4n = 2 n_E - 2n.
\]
Equating these two expressions gives $2n = 2 n_E - 2 n_\mfc + 2g$ as desired.
\end{proof}

\begin{proof}[Proof of \refthm{OscAlgorithmTori}]
This is simply the special case of \refthm{OscAlgorithm} when the boundary consists only of tori. We have all $g_i = 1$ so that $g=n_\mfc$, and the equation at the end of the previous proof reduces to $n=n_E$.
\end{proof}

\section{Matrices associated with oscillating curves}\label{Sec:Matrices}

We have constructed curves and their duals. In this section we recall the definitions of the incidence matrix, Neumann--Zagier matrix, and use our new tools to construct an (almost, up to factors of 2) symplectic matrix associated with a triangulation of a 3-manifold with torus boundary component(s). 

For a given ideal triangulation, the basis $\a_i$, $\b_i$, $\cc_i$ are encoded by vectors:
\[
\a_1=(1, 0, 0, 0, \dots, 0), \b_1=(0,1,0,0,\dots, 0), \cc_1=(0,0,1,0,\dots, 0),
\]
and so on. Using this basis, the oscillating curves of \refdef{abstract_oscillating_curve} can be written as vectors. Indeed, for the curves $C_i$ running once around an edge, and cusp curves $\mfm_i$, $\mfl_i$, these vectors are typically listed in a matrix $\In$, called the \emph{incidence matrix} associated to the gluing data. For example, the incidence matrix can be obtained in the computer software SnapPy~\cite{SnapPy}.

Using the fact that $\a_i+\b_i+\cc_i=0$, Neumann and Zagier adjusted the basis to consist of $\a_i-\cc_i$ and $\b_i-\cc_i$ for all $i$. The Neumann-Zagier matrix is obtained from the incidence matrix by subtracting the terms associated with $\cc_i$ from those associated with $\a_i$ and $\b_i$.

Theorem~\ref{Thm:OscAlgorithm} implies that we may complete the Neuman-Zagier matrix to a symplectic matrix, by augmenting it with rows associated with the curves $\Gamma_i$.

\begin{example}
  For the figure-8 knot complement, and the curves identified in the previous section, we obtain the following incidence vectors associated to the curves:
\begin{align*}
  \mfm: &  (0, -1, 0, 1, 0, 0) \\
  \mfl: &  ( 0, -2, -2, 0, 2, 2) \\
  C: &     ( 2 ,0, 1, 2, 0, 1) \\
  \Gamma:& ( -1, -1, -1, -1, 1, 1) 
\end{align*}

The rows of the Neumann-Zagier matrix are obtained by subtracting the entry corresponding to $\cc_i$ from those corresponding to $\a_i$ and $\b_i$, for each $i$. We make this modification for each of the vectors above. Then by our main theorem, \refthm{MainThm}, the resulting matrix is symplectic (with extra factor of 2). 
\[
\SY = \kbordermatrix{
     & \Delta_1 & & \Delta_2 &\\
\mfm & 0 & -1 & 1&  0 \\
\mfl & -2 & 0 & 2 & 0 \\
\Gamma& 0 & 0 & -2 & 0 \\
C     & 1 & -1 & 1 & -1
}
\]
\end{example}

Recall that a complete hyperbolic structure associated with a triangulation satisfies edge gluing equations and cusp equations. In logarithmic form, this is encoded by the incidence matrix, and the Neumann--Zagier matrix. In particular, a hyperbolic structure is encoded by a vector
\[ Z' = (Z_1, Z_1', Z_1'', \dots, Z_{n-n_\mfc}, Z_{n-n_\mfc}', Z_{n-n_\mfc}'') \]
satisfying
\[
\In \cdot Z' = i\pi C,
\]
where $C$ is the vector $(2i\pi, \dots, 2i\pi, 0, \dots, 0)^T$, with entries $2i\pi$ for each row associated with an edge gluing equation, or $C_i$, and $0$ for each row associated with a cusp, and satisfying $Z_j+Z_j'+Z_j''=i \pi$ for all $j$, and
$\exp(Z_i) + \exp(Z_i^{-1}) -1 = 0$.

This can be rewritten using the Neumann--Zagier matrix:
\[ \NZ \cdot Z = i\pi C^\flat \]
where $Z$ consists only of terms $Z_j, Z_j'$ from $Z'$, and $C^\flat$ is obtained from $C$ by subtracting from each row the sums of the integers in every third column from the same row in the incidence matrix (the integers corresponding to the $\cc_j$).

\begin{theorem}\label{Thm:SolvingNZ}
Let $\SY$ be the $2n\times 2n$ matrix whose rows correspond to gluing equations and their symplectic duals arising from oscillating curves, and curves forming a symplectic basis for $H_1(\partial M)$.
Then a solution $Z$ to the gluing and cusp equation satisfies
\[ 2Z = i\pi (-J(\SY)^T J)\overline{C} \]
Here $\overline{C}$, $\SY$, $J$ are all integral matrices, so our solutions are integer or half integer multiples of $i\pi$. 
\end{theorem}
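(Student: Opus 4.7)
The plan is to exploit the fact that $\SY$ is essentially symplectic --- namely $\SY J \SY^T = 2 J$ in an appropriate ordering --- and then invert the system $\SY \cdot Z = i\pi \overline{C}$ via the standard symplectic inversion identity. Here $\overline{C}$ is $C$ with zeros appended in the rows corresponding to the newly adjoined dual oscillating curves $\Gamma_i$; equivalently, we impose $\NZ Z = i\pi C$ together with the supplementary conditions that the $\Gamma_i$-rows evaluate to $0$, picking out a canonical $Z$.

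The first step would be to verify the essentially symplectic identity. The columns of $\SY$ are indexed by the Neumann--Zagier basis $\{\a_i, \b_i\}_{i=1}^n$ of $\V$, in which $\omega$ has matrix $J$. The rows of $\SY$ are the combinatorial holonomies $h(v_1), \ldots, h(v_{2n})$ of the oscillating curves $v_1, \ldots, v_{2n}$ produced in \refthm{OscAlgorithm}, which by construction form a symplectic basis for the intersection pairing. Combining these facts with \refthm{MainThm} gives
\[
(\SY J \SY^T)_{ij} \;=\; \omega(h(v_i), h(v_j)) \;=\; 2\, v_i \cdot v_j \;=\; 2 J_{ij},
\]
whence $\SY J \SY^T = 2J$.

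The second step is a short linear-algebra derivation. From $\SY J \SY^T = 2J$, right-multiplying by $J^{-1}$ yields $\SY^{-1} = \tfrac12 J \SY^T J^{-1}$; using $J^{-1} = -J$ (since $J^2 = -I$), this becomes $2\SY^{-1} = -J \SY^T J$. Substituting into $\SY Z = i\pi \overline{C}$,
\[
2Z \;=\; 2\,\SY^{-1}(i\pi \overline{C}) \;=\; i\pi\,(-J \SY^T J)\,\overline{C},
\]
the claimed formula. Integrality of $\SY$, $J$, and $\overline{C}$ then implies $2Z$ is $i\pi$ times an integer vector, so each entry of $Z$ is a half-integer multiple of $i\pi$.

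The main obstacle is bookkeeping rather than conceptual. The oscillating curves come in paired duals $(C_i, \Gamma_i)$ and $(\mfm_j, \mfl_j)$, but whether $\SY J \SY^T$ equals exactly $2J$ --- versus $2J'$ for some signed-permutation analogue $J'$ of $J$ --- depends on the ordering and sign conventions for the rows of $\SY$, as the figure-8 example in \refsec{Matrices} illustrates (there the two pair-blocks emerge with opposite signs). One either fixes the row order so that the identity $\SY J \SY^T = 2J$ holds as stated, or substitutes the appropriate signed permutation of $J$ throughout; in either case, the linear algebra of step two goes through unchanged to yield the explicit formula.
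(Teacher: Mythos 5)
Your proposal is correct and follows essentially the same route as the paper: establish that $\SY$ is symplectic up to a factor of $2$ via \refthm{MainThm} and \refthm{OscAlgorithm}, then invert $\SY Z = i\pi\overline{C}$ using the identity $(-J\SY^TJ)\SY = 2\Id$. Your closing remark about row ordering and sign conventions is a reasonable caveat that the paper's own (very terse) proof leaves implicit.
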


\begin{proof}
Adjust the matrix $\NZ$ to be invertible by augmenting it with the symplectic duals to gluing equations arising from oscillating curves. Add $0$'s to each corresponding row of $C^\flat$ to obtain $\overline{C}$. Then the logarithmic gluing and cusp equations take the form
\[ \SY\cdot Z = i\pi \overline{C}. \]
By \refthm{MainThm}, $\SY$ is a symplectic matrix up to a factor of 2, satisfying:
\[ (-J(\SY)^T J) \cdot \SY = 2\Id. \]
Therefore multiplying each side of the equation by $(-J(\SY)^T J)$ gives the result.
\end{proof}

We conclude with one additional example.

\subsection{The Whitehead link}

The Whitehead link complement is well known to have a decomposition into a single regular ideal octahedron; this is described in Thurston's 1979 lecture notes, for example~\cite{thurston}. 

Here, we will work through an example of a less familiar decomposition of the Whitehead link complement, obtained in \cite{HowieMathewsPurcell}. This is a decomposition into five tetrahedra and five ideal edges, with the property that one of the cusps only has one ideal edge running into it, with the other endpoint of that edge on the other cusp. This edge, call it $E_1$, must lie in the collection $\calE_\mfc$. Three additional edges of the triangulation border tetrahedra meeting the edge $E_1$. These edges are associated to slopes in the Farey graph in \cite{HowieMathewsPurcell}; we therefore refer to them by the same notation as that paper, labeling them $E_\infty$, $E_{2/1}$ and $E_{3/1}$. The final edge has both endpoints on the second cusp. We call this edge $E_0$ and add it to $\calE_\mfc$.

Choose meridian and longitude for the cusps as in \cite{HowieMathewsPurcell}. We have edge curves $C_\infty$, $C_{2/1}$, $C_{3/1}$ associated with edges $E_\infty$, $E_{2/1}$ and $E_{3/1}$, respectively. We now find oscillating curves $\Gamma_\infty$, $\Gamma_{2/1}$, $\Gamma_{3/1}$.

Begin by selecting a long rectangle for $E_0$. This is equivalent to choosing a face of a tetrahedron adjacent to $E_0$, and a direction, and connecting it to both endpoints of $E_0$. We select the face $0(123)$ in Regina notation, which lies between tetrahedra $0$ and $1$ and is indicated in \reffig{whitehead_cusp_basis}.

\begin{figure}
  \import{figures/}{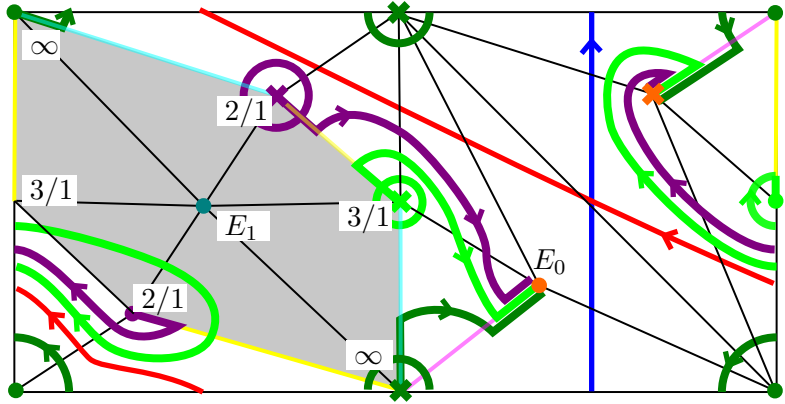}
  \caption{The cusp triangulation and symplectic basis of oscillating curves for the more interesting cusp of the 5-tetrahedron decomposition of the Whitehead link. Shown are curves $\mfm_1$ (blue), $\mfl_1$ (red), $C_\infty$ (dark green), $C_{2/1}$ (purple), $C_{3/1}$ (light green), $\Gamma_{\infty}$ (dark green), $\Gamma_{2/1}$ (purple), and $\Gamma_{3/1}$ (light green). Note that $\Gamma_\infty$, $\Gamma_{2/1}$ and $\Gamma_{3/1}$ all travel along $E_0$ but do not intersect there; they emerge from both ends of $E_0$ in the same order.}
  \label{Fig:whitehead_cusp_basis}
\end{figure}

For each of $E_\infty$, $E_{2/1}$ and $E_{3/1}$, select a face and an orientation on the face, and connect that face to the associated edge in the cusp diagram.

Now form $\Gamma_\infty$ by drawing paths in the cusp from one of our endpoints in associated with $E_\infty$ to one associated with $E_0$, disjoint from $\mfm_1$ and $\mfl_1$. This is shown in dark green in \reffig{whitehead_cusp_basis}. Similarly form $\Gamma_{2/1}$, shown in medium green, and $\Gamma_{3/1}$. Observe that we have made several choices when picking these curves, and there are likely choices that would have led to simpler curves. However, the ones shown will suffice.

The associated vectors in the incidence matrix are as follows:
\begin{align*}
  \Gamma_\infty: & (0,1,1, 1,1,0, 2,1,0, 0,0,0,0,0,0) \\
  C_\infty: & (0,1,2, 1,2,0, 2,1,0, 0,0,1, 0,0,1) \\
  \Gamma_{2/1}: & (0,0,1, -1,0,-1, 1,-1,-1,0,0,0,0,0,0) \\
  C_{2/1}: & (1,0,0, 0,0,1, 0,0,0, 0,1,0, 0,1,0) \\
  \Gamma_{3/1}: & (0,1,1, 0,0,-1, 1,0,-1, 0,-1,0, 0,-1,0) \\
  C_{3/1}: & (0,1,0, 1,0,0, 0,1,0, 1,0,0, 1,0,0) \\
  \mfm_1: & (0,0,1, 0,-1,0, 0,0,0, 0,0,0, 0,0,0) \\
  \mfl_1: & (0,-1,1, 1,-1,0, 0,0,0, 0,0,0, 0,0,0) \\
\end{align*}
Additionally, there are vectors $\mfm_0$ and $\mfl_0$ as follows, not shown here but shown in \cite{HowieMathewsPurcell}:
\begin{align*}
  \mfm_0: & (0,0,0, 0,0,0, 0,0,0, 1,0,0, -1,0,0) \\
  \mfl_0: & (0,0,0, 0,0,0, 0,0,0, 0,1,0, 0,-1,0)
\end{align*}

Then the Neumann-Zagier matrix can be extended to the following almost symplectic matrix.
\[
\kbordermatrix{
     & \Delta_0 & & \Delta_1 & & \Delta_2 && \Delta_3 && \Delta_4\\
\mfm_0 & 0&0 & 0&0 & 0&0 & 1&0 & -1&0 \\
\mfl_0 & 0&0 & 0&0 & 0&0 & 0&1 & 0&-1 \\
\mfm_1 & -1&-1 & 0&-1& 0&0& 0&0 & 0&0 \\
\mfl_1 & -1&-2 & 1&-1 & 0&0 & 0&0 & 0&0 \\
\Gamma_\infty& -1&0 & 1&1 & 2&1 & 0&0 & 0&0 \\
C_\infty     & -2&-1& 1&2 & 2&1 & -1&-1 & -1&-1 \\
\Gamma_{2/1} & -1&-1& 0&1 & 2&0 & 0&0 & 0&0 \\
C_{2/1} & 1&0 & -1&-1 & 0&0 & 0&1 & 0&1 \\
\Gamma_{3/1} & -1&0 & 1&1 & 2&1 & 0&-1 & 0&-1 \\
C_{3/1} & 0&1 & 1&0 & 0&1 & 1&0 & 1&0
}
\]

\small

\bibliography{biblio}
\bibliographystyle{amsplain}

\end{document}